\newtheorem{remark}{Remark}[section]
\newtheorem{proposition}[remark]{Proposition}
\newtheorem{theorem}[remark]{Theorem}
\newtheorem{definition}[remark]{Definition}
\newtheorem{corollary}[remark]{Corollary}
\newtheorem{lemma}[remark]{Lemma}
\newtheorem{assumption}[remark]{Assumption}
\def\R{\mathbb R}
\def\N{\mathbb N}
\def\E{\mathbb E}
\def\P{\mathbb P}
\def\Q{\mathbb Q}
\def\shc{{\mathcal C}}
\def\shd{{\mathcal D}}
\def\shf{{\mathcal F}}
\def\shl{{\mathcal L}}
\newenvironment{prooff}{{\bf \textit{Proof}}}{\hfill $\Box$ \\}
\newenvironment{preuve}{{\bf \textit{Proof.}}}{\hfill $\Box$ \\}
\DeclareMathOperator*{\sgn}{sign}
\numberwithin{equation}{section}
\title{ON SDE\MakeLowercase{s} FOR BESSEL PROCESSES IN LOW DIMENSION AND PATH-DEPENDENT EXTENSIONS}
\author{Alberto OHASHI$^1$}
\author{Francesco RUSSO$^2$}
\author{Alan TEIXEIRA$^1$}
\address{$1$ Departamento de Matem\'atica, Universidade de Bras\'ilia, 70910-900, Bras\'ilia, Brazil.} \email{amfohashi@gmail.com}
\address{$2$ ENSTA Paris, Institut Polytechnique de Paris,
 Unit\'e de Math\'ematiques appliqu\'ees, 828, boulevard des Mar\'echaux, F-91120 Palaiseau, France}
 \email{francesco.russo@ensta-paris.fr}
\address{$3$ ENSTA Paris, Institut Polytechnique de Paris,
 Unit\'e de Math\'ematiques appliqu\'ees, 828, boulevard des Mar\'echaux, F-91120 Palaiseau, France}
\email{alan.teixeira@ensta-paris.fr}
\date{November 7th 2022}
\begin{document}

\begin{abstract}
  The Bessel process in low dimension ($0 \le \delta \le 1$)
  is not an It\^o process and it is a semimartingale
  only in the cases $\delta = 1$ and $\delta = 0$.
 In this paper
  we first characterize it as the unique solution of
  an SDE with distributional drift or more precisely
  its related martingale problem.
  In a second part, we introduce a suitable notion of
  {\it path-dependent Bessel processes} and we characterize
  them as  solutions of path-dependent SDEs with distributional
  drift.
\end{abstract}

\maketitle

{\bf Key words and phrases.} SDEs with distributional drift;
Bessel processes; path-dependent stochastic differential equations.

{\bf 2020 MSC}. 60G99; 60H10.



\section{Introduction}

\label{SIntro}

The class of Bessel processes is one of the most important classes of
diffusion processes with values in $\R_+$. It is a family of strong Markov processes parameterized by $\delta \in \mathbb{R}_+$ (called the
{\it dimension}), which has deep connections with the radial behavior
of the Brownian motion, square-root diffusions, conformally invariant processes, etc.
Bessel processes have been largely investigated in the literature,
we refer the reader to e.g \cite{mansuy,zamb,Yor}
(Section 2.3, Chapter 3 and Chapter XI, respectively)
 for an overview on Bessel processes.

Let $x_0 \ge 0$.
We recall that a {\it Bessel process} $X$ (with initial condition $x_0$,
dimension $\delta \ge 0$
and denoted $\text{BES}^\delta(x_0)$)
is defined as
the square root of the so-called  {\it squared Bessel process}
 (with initial condition $s_0 = x^2_0$, dimension $\delta \ge 0$
and denoted $\text{BESQ}^\delta(x^2_0)$),
which is characterized as the pathwise unique solution of the SDE
\begin{equation} \label{ESQBessel}
  dS_t = 2 \sqrt{\vert S_t \vert} dW_t + \delta t, S_0 = x_0^2,
  \end{equation}
where $W$ is a standard Brownian motion.

When $\delta > 1$ it is possible to characterize $X$ as (pathwise
unique non-negative)
solution of
\begin{equation}\label{NMBES}
dX_t = \frac{\delta-1}{2} X^{-1}_tdt + dW_t,
\end{equation}
where $W$ is again a standard Brownian motion,
see for instance Exercise (1.26) of Chapter IX in \cite{Yor}.
From now on the letter $W$ will always denominate such a process.
In particular $X$ is an It\^o process.
For $0 \le \delta \le 1$, the integral $\int_0^t X^{-1}_sds$ does not converge and $\text{BES}^\delta(x_0)$ is a non-semimartingale process, except for $\delta = 1$ and
$\delta = 0$,  see \cite{Yor, Jean},  Chapter XI Section 1 and Section 6.1,
respectively.
If $0 < \delta < 1$, see for instance
  \cite{BertoinBessel} it is known that
\begin{equation}\label{NMBES1}
X_t = x_0+ \frac{\delta-1}{2}~\text{p.v.}~\int_0^t\frac{ds}{X_s}ds + W_t, t\ge 0,\end{equation}
where p.v. stands for principal value   defined as
$$ \int_0^t \varphi(X_s) ds := \int_{\R_+} \varphi(a) L^X_t (a) a^{\delta-1} da,$$
where $L^X$ is the local time of $X$, defined as density occupation measure,
 i.e.
for every bounded Borel function $\varphi: \R_+ \rightarrow \R_+$,
see for instance (10.1.3) of \cite{mansuy}.
More precisely one defines
 $$ \text{p.v.}~\int_0^t\frac{ds}{X_s}ds := \int_{\R_+}
 (L^X(a) - L^X(0)) a^{\delta - 2} da.$$

The drift in decomposition \eqref{NMBES1} is a zero energy additive functional in the language of Markov processes and $\text{BES}^\delta(x_0)$ is a Dirichlet process, i.e. the sum of a local martingale and a zero quadratic variation
process. As a consequence, in the low dimensional regime, \eqref{NMBES} does not correctly represent the paths of $\text{BES}^\delta(x_0)$. Representation \eqref{NMBES1} can be interpreted as the Dirichlet process decomposition of $\text{BES}^\delta(x_0)$. For further details, we refer the reader to the works \cite{zamb,ew,mansuy} and other references therein.

Typical examples of low-dimensional Bessel processes appear in the theory of Schramm-Loewner evolution, see e.g. \cite{lawler}. Two-parameter family of Schramm-Loewner evolution $\textit{SLE}(\kappa,\kappa-4)$ defined in \cite{werner} provides a source of examples of $\text{BES}^\delta$ flows with very singular behavior when $\delta = 1-\frac{4}{\kappa}, \kappa> 4$. In fact, the final right-boundary of $\text{SLE}(\kappa, \kappa - 4)$ processes is described by the excursions of $\text{BES}^\delta(x_0)$. We refer the reader to \cite{dubedat} for more details. We also drive attention to \cite{beliaev2020new} for more recent applications of low-dimensional Bessel processes starting at the origin.

In this work, we characterize $\text{BES}^\delta(x_0)$, for $0\le \delta \le 1$, as the unique solution of an SDE with distributional drift. The main result of this paper states that one natural way to investigate the SDE dynamics of low-dimensional Bessel processes is the interpretation of the singular drift $\frac{1}{x}$ as the derivative in the sense of Schwartz distributions of the function $x \mapsto {\rm log} \vert x \vert$ rather than principal values via local times. For this purpose, we interpret \eqref{NMBES} as a strong-martingale problem previously introduced by \cite{russo_trutnau07}. In this case, for $0\le \delta \le 1$, we prove $\text{BES}^\delta(x_0)$ is the unique non-negative solution of a suitable strong-martingale problem starting at $x_0\ge 0$. A non-Markovian extension is also considered for SDEs with singular drifts of the form
$$ \frac{\delta-1}{2}\frac{1}{X_t} + \Gamma(t,X^t),$$
where $\Gamma$ is a path-dependent non-anticipative functional satisfying some technical conditions and $X^t$ will be given in \eqref{etat}.
 Our analysis is inspired by the series of works \cite{frw1, frw2, russo_trutnau07} which treat Markovian SDEs of the form
\begin{equation}\label{SDEone}
dX_t = \sigma(X_t)dW_t + b'(X_t)dt,\ \ X_0 \stackrel{d}{=}
\delta_{x_0},
\end{equation}
where $\sigma$ and $b$ are continuous functions on $\mathbb{R}$.
 Moreover $\sigma$ is strictly positive and one supposes
  the existence of the function
\begin{equation}\label{Sigmaintr}
\Sigma (x) :=  2\int_{0}^{x}\frac{b'}{\sigma^{2}}(y)dy, x \in \mathbb{R},
\end{equation}
 as a suitable limit via regularization.
 We stress that $b'$ is the derivative of some function $b$ in the sense of distributions.
Assuming
\eqref{Sigmaintr},
the Markovian operator $L$ is defined by the authors 
as
\begin{equation}
 \label{DefLbis}
 Lf = (e^\Sigma f')'\frac{e^{-\Sigma}\sigma^2}{2},
\end{equation}
where $ f $ belongs to the domain
 $$ D_L =\{ f \in C^1(\R) \vert  f' e^\Sigma  \in C^1(\R),$$
 see e.g. \cite{frw1}, Section 2 and also
 \cite{ORT1_PartI} Proposition 4.1.

When $\sigma$ and $b'$ are functions
then previous expression equals
\begin{equation} \label{DefL}
Lf = \frac{\sigma^{2}}{2}f'' + b'f'.
\end{equation}



In \cite{ORT1_PartI},  we have studied
the class of SDEs
\begin{equation} \label{X}
dX_t = \sigma(X_t)dW_t + b'(X_t)dt + \Gamma(t, X^t)dt,\ \ X_0 \stackrel{d}{=}
\delta_{x_0},
\end{equation}
 for some classes of functionals $\Gamma$.

In this paper, we will investigate existence and uniqueness
of an SDE of the type \eqref{X}, where
$\sigma = 1$, but $b$ is no more a continuous function.
More precisely, we focus on
the SDE
\begin{equation}\label{sdeint}
dX_ t  = dW_t + b'(X_t)dt + \Gamma(t,X^t)dt,~X_0 \stackrel{d}{=} \delta_{x_0},
\end{equation}
where $b$  is given by
\begin{equation} \label{EbBessel}
b(x) =
\left \{
  \begin{array}{ccc}
\frac{\delta - 1}{2}\log|x|, x \in \R^*  &\vert& \ \delta \neq 1 \\
     H(x),   x \in \R
&\vert& \delta = 1,
\end{array}
\right .
\end{equation}
and   $H$ is the Heaviside function and $\mathbb{R}^* = \mathbb{R}-\{0\}$.
Then, \eqref{NMBES1} is considered as a particular case of the SDE \eqref{sdeint} with distributional drift $b'$ and $\Gamma = 0$.
Even though $b$ is no longer a continuous function,
\eqref{Sigmaintr} can still be defined in such a way that
 $\Sigma  \equiv 2 b$ and (\ref{DefLbis}) holds. We distinguish the two cases: $0 \le \delta < 1$ and $\delta =1$.
\begin{itemize}
\item{$0 \le \delta < 1$.}
  If $b$ is given by \eqref{EbBessel}, then
\eqref{Sigmaintr} implies
\begin{equation} \label{hBesselIntro}
\exp(-\Sigma(x)) = |x|^{1 - \delta}.
\end{equation}
At this point, representation \eqref{DefLbis}
for $L^\delta = L$ yields
   \begin{equation}\label{LDbisdelta}
     L^{\delta}f(x) = \frac{f''(x)}{2} + \dfrac{(\delta -1)f'(x)}{2x}, \
     x \neq 0.
	\end{equation}
\item{$\delta = 1$.}
  In this case, $ b(x) = H(x)$.
So  \eqref{DefLbis} yields
   \begin{equation}\label{LDbisdelta1}
     L^{1}f(x) = \frac{f''(x)}{2} + \delta_0 f'(x), \
     x \neq 0,
	\end{equation}
        where  $\delta_0$ is the Dirac measure at zero.
          Those expressions are perfectly well-defined for $f \in \shd_{L^\delta}$ defined in
       Section \ref{Sdeltasmaller1} below.
        The product $\delta_0 f'$ for $f \in D_L$ is necessarily zero.
      \end{itemize}


We then study the (possibly non-Markovian) martingale problem associated with the operator
$$\mathcal{L}^\delta f = L^\delta + \Gamma f',$$
in a suitable domain. The notion of martingale problem related to $\mathcal{L}^\delta$ is given by Definition \ref{D31}. The notion of strong martingale problem related to the domain of $L^\delta$ and an underlying Brownian motion $W$ is given by Definition \ref{DSolution}, which borrows the one in \cite{russo_trutnau07}. It has to be compared with the notion of strong existence and pathwise uniqueness of an SDE. In particular, it represents the corresponding notion of strong solution of SDEs in the framework of martingale problems.

Sections \ref{Sdeltasmaller1}, \ref{S53}, \ref{S54}
present a series of results concerning existence/uniqueness for the SDE \eqref{sdeint} in Markovian case, for $0 \le \delta < 1$.
In particular, Propositions \ref{S} and \ref{su} show the low-dimensional Bessel process $\text{BES}^\delta(x_0)$ as the unique non-negative solution of the strong martingale problem associated with $L^\delta$ for $0 < \delta < 1$ and $x_0\ge 0$.
A similar discussion concerns the case $\delta = 1$, see Section
\ref{S55}, Propositions \ref{Pdelta=1} and \ref{S1}.
We remark that in the case $\delta = 1$, results for pathwise
uniqueness, see \cite{harri} were already available in the literature.

  In Section \ref{ExtDomain} we connect the martingale problem
  related to Bessel processes to one related to an extended domain
  which includes the harmonic function
\begin{equation} \label{Eharmonic}
  h(x) = {\rm sgn}(x) \frac{{\vert x \vert}^{2 - \delta}}{2-\delta},
  x \in \mathbb R.
\end{equation}
  We also give general conditions
  on the marginal law of a generic process
  which is solution of the basic martingale
  problem to solve the one with extended domain.
  This is fulfilled for instance by the Bessel
  process with dimension $\delta >0$.
  Related considerations are discussed when
  $\delta = 0$.
 
  In Section \ref{SBesselPath}, we establish existence and uniqueness of the  martingale problem associated with the non-Markovian SDE \eqref{sdeint} under the condition that $\Gamma$ is bounded; see Propositions \ref{P61}
 and \ref{P64}. Proposition \ref{exi} proves existence when $ \Gamma$ is 
unbounded 
with some technical conditions.
 Theorem \ref{uni2} illustrate sufficient conditions on $\Gamma$ to have well-posedness of the strong martingale problem.

We highlight that  \cite{pilipenko} has established uniqueness \eqref{NMBES1} of non-negative solutions
$X$, when  $0 \le \delta \le 1$,
under the condition that the solution $X$ spends zero time at the point zero, i.e.,
\begin{equation}\label{ztime}
\mathbb{E} \Big[\int_0^\infty \mathds{1}_{\{0\}}(X_s)ds\Big]=0.
\end{equation}
In contrast to  \cite{pilipenko} we do not suppose that assumption and we provide uniqueness
among all non-negative solutions.

One important objective of the paper is the definition of
{\it path-dependent Bessel process}.
 Let $\delta  \ge 2$ be an integer.
Similarly to the classical Markovian case with integer dimension, 
a path-dependent Bessel type process (as solutuion of \eqref{sdeint})   appears 
considering the dynamics of a $\delta$-dimensional Brownian motion $\beta$ with
 drift having a radial intensity proportional to
a non-anticipative functional $\Gamma$.

%
 More precisely, let $Y$ be a solution to
\begin{equation} \label{EY10}
 dY_t = d\beta_t + \Gamma(t,\Vert Y_s \Vert_{\R^{\delta}}, s \le t) \frac{Y_t}{\Vert Y_t \Vert_{\R^{\delta}}}  dt,
\end{equation}
Then $X_t := \Vert Y_t \Vert_{\R^{\delta}}, $ i.e.
 the Euclidean norm  in $\R^\delta$,
 solves \eqref{sdeint}.
Indeed, if $Y$ is a solution of \eqref{EY10}, then
a formal application of It\^o's formula to $\rho_t: = \Vert Y_t \Vert^2_{\R^\delta}$
and L\'evy's characterization theorem for local martingales
show that
\begin{equation} \label{NBES11}
  d\rho_t = 2\sqrt{\rho_t} dW_t + 2 \sqrt{\rho_t}
  \Gamma (t, \sqrt{\rho_s}, s \le t) dt + \delta dt.
  \end{equation}
A subsequent formal application of It\^o's formula shows that $X_t = \sqrt{\rho_t}$
solves \eqref{sdeint}.
Our result concerns the extension of that model to the singular case represented by $\delta \in [0,1]$.

The paper is organized as follows. After this Introduction
we recall the notations and some important results from \cite{ORT1_PartI}.
Then we introduce specific preliminary considerations. Section \ref{SBessel}
is devoted to the case of Bessel processes in low
dimension, under the perspective of strong martingale problems.
Section \ref{SBesselPath} discusses the case of non-Markovian perturbations
of  Bessel processes.

\section{About path-dependent martingale problems}

\label{SPathDep}

\subsection{Preliminary notations, definitions and results}

$ $

In this section we recall the general notation and some necessary results from \cite{ORT1_PartI}.

Let $I$ be an interval of $\mathbb{R}$.
For $k \in \N$, $C^{k}(I)$ will denote
the space of real functions defined on $I$ having
continuous derivatives till order $k$.
Such space is endowed with the uniform convergence topology on compact sets
for the functions and all derivatives.
Generally $ I = \mathbb{R}$, $\R_+:= [0,+\infty[$, $\R_{-}:= ]-\infty, 0]$,
$[0, T],$ for some fixed
positive real $T$.
If there is no ambiguity $C^{k}(\mathbb{R})$ will be simply indicated by
$C^{k}.$ The space of continuous functions on $I$ will be denoted by $C(I)$.
Given an a.e. bounded real function $f$, $\vert f \vert_\infty$
will denote the essential supremum.

We recall some notions from \cite{frw1}.
For us all filtrations $\mathfrak{F}$ fulfill the usual conditions.
When no filtration is specified, we mean the canonical filtration of an
underlying process.
Otherwise, the canonical filtration associated with a process $X$
is denoted by $\mathfrak{F}^X$.

A sequence $(X^n)$ of continuous processes indexed by $[0,T]$ is said to converge u.c.p.
to some process $X$ whenever $\displaystyle\sup_{t \in [0,T]} \vert X^n_t - X_t \vert$ converges to zero in probability.

We consider a locally bounded
functional
\begin{equation} \label{EGamma}
\Gamma: \Lambda \rightarrow \mathbb{R},
\end{equation}
where
\begin{equation*}
\Lambda:= \{(s, \eta^s), s \in [0, T], \eta \in C([0, T])\}
\end{equation*}
and
\begin{equation} \label{etat}
\eta^{t}_s =
\left\{\begin{array}{ccc}
\eta_s,  &\text{if}& \ s \leq t\\
\eta_t, & \text{if}& \ s >t.
\end{array}
\right.
\end{equation}

By convention, we extend $\Gamma$ from $\Lambda$ to $[0,T] \times C([0,T])$
by setting (in a non-anticipating way)
$$ \Gamma(t,\eta):= \Gamma(t,\eta^t), t \in [0,T], \eta \in C([0,T]).$$
All along the paper $E$ will denote $\R$ or $\R_+$.

Let us consider some locally bounded
Borel functions  $\sigma, b': E \rightarrow \R$.
In this case the path-dependent SDE
\begin{equation} \label{E1.3}
\left \{
\begin{array}{cll}
dX_t  &=& \sigma(X_t)  dW_t + b'(X_t) dt + \Gamma(t,X^t) dt   \\
X_0  & = & \xi,
\end{array}
\right.
\end{equation}
for some deterministic initial condition $\xi$ taking values in $E$,
makes  perfectly sense, see Section 5 of \cite{ORT1_PartI},
in particular one can speak about strong existence, pathwise uniqueness,
existence and uniqueness in law.
\eqref{E1.3} is denominated by $E(\sigma,b',\Gamma)$.
 Proposition 3.2 in \cite{ORT1_PartI} implies the following.

\begin{proposition} \label{P31}
 Let $b': E \rightarrow \R$ be a locally bounded function.
  We set $L f = \frac{\sigma^2}{2} f'' + b' f', \ f \in C^2(E)$.
	A couple $(X,\P)$ is a solution of
	$E(\sigma, b',\Gamma)$,
	if and only if, under $\P$,
	\begin{equation}\label{Ef32}
          f(X_t) - f(X_0) - \int_0^t L f(X_s) ds -
          \int_0^t f'(X_s) \Gamma(s,X^s)ds
	\end{equation}
	is a local martingale, where
	$ Lf = \frac{\sigma^2}{2} f'' + b'f'$,
	for every $f \in C^2(E)$.
      \end{proposition}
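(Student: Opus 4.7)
The plan is to establish the equivalence by the classical dictionary between SDE dynamics and martingale problems. In fact, since the statement is cited from \cite{ORT1_PartI}, the argument amounts to combining It\^o's formula in one direction with the Stroock--Varadhan identification of a semimartingale via moment tests in the other.

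For the direct implication, I would assume $(X,\P)$ solves $E(\sigma,b',\Gamma)$, so that $X$ is a continuous semimartingale with
$$X_t = X_0 + \int_0^t \sigma(X_s)\,dW_s + \int_0^t b'(X_s)\,ds + \int_0^t \Gamma(s,X^s)\,ds.$$
All integrands are locally bounded because $\sigma$, $b'$ and $\Gamma$ are. For $f \in C^{2}(E)$, It\^o's formula applied to $f(X_t)$ produces
$$f(X_t) - f(X_0) = \int_0^t f'(X_s)\sigma(X_s)\,dW_s + \int_0^t Lf(X_s)\,ds + \int_0^t f'(X_s)\Gamma(s,X^s)\,ds,$$
and the expression in \eqref{Ef32} is precisely the stochastic integral $\int_0^t f'(X_s)\sigma(X_s)\,dW_s$, hence a local martingale.

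For the converse I would feed two specific test functions into the martingale property. Taking $f(x) = x$ (at least locally, via a cutoff if $E$ is not all of $\R$) gives that
$$M_t := X_t - X_0 - \int_0^t b'(X_s)\,ds - \int_0^t \Gamma(s,X^s)\,ds$$
is a continuous local martingale, so $X$ is a continuous semimartingale with drift $A_t := \int_0^t b'(X_s)\,ds + \int_0^t \Gamma(s,X^s)\,ds$. Taking $f(x) = x^{2}$ and comparing the resulting local martingale with the one obtained by integration by parts on $X_t^{2} = (X_0 + M_t + A_t)^{2}$, the drift terms involving $b'$ and $\Gamma$ cancel out; what remains is the identity $\langle M\rangle_t - \int_0^t \sigma^{2}(X_s)\,ds$, which is a continuous local martingale of finite variation and must therefore vanish. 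Hence $\langle M\rangle_t = \int_0^t \sigma^{2}(X_s)\,ds$.

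The final step is to represent $M$ as a stochastic integral $\int_0^{\cdot}\sigma(X_s)\,dW_s$ for some Brownian motion $W$. When $\sigma$ does not vanish, one sets $W_t := \int_0^t \sigma^{-1}(X_s)\,dM_s$ and invokes L\'evy's characterization. In the general case this is precisely the point where some care is required: one must enlarge the probability space with an auxiliary independent Brownian motion $\beta$ and define $W_t := \int_0^t \sigma^{-1}(X_s)\mathds{1}_{\{\sigma(X_s)\neq 0\}}\,dM_s + \int_0^t \mathds{1}_{\{\sigma(X_s)=0\}}\,d\beta_s$, which is then Brownian by L\'evy and satisfies $M_t = \int_0^t \sigma(X_s)\,dW_s$. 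This construction, entirely parallel to Proposition~3.2 of \cite{ORT1_PartI}, yields $(X,\P)$ as a solution of $E(\sigma,b',\Gamma)$ and is the main technical point of the argument.
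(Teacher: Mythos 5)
Your argument is correct and is essentially the standard one the paper relies on: the statement is quoted as a consequence of Proposition~3.2 of \cite{ORT1_PartI}, whose proof proceeds exactly as you do (It\^o's formula for the direct implication; the test functions $x$ and $x^2$ to identify the drift and $\langle M\rangle_t=\int_0^t\sigma^2(X_s)\,ds$, then L\'evy's characterization with a possible enlargement of the space where $\sigma$ vanishes). So the proposal matches the intended proof; no gaps beyond the space-enlargement caveat you already flag, which is inherent in the weak-solution formulation.
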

In this paper, we will be interested in a formal
$E(\sigma,b', \Gamma)$ where $\sigma = 1$ but $b'$
is the derivative of some specific Borel discontinuous function.
The formulation  is inspired by Proposition \ref{P31}
which states that the SDE is equivalent to a specific martingale problem.
We will consider formal PDE operators of the
type $L: \shd_L(E) \subset C^1(E) \rightarrow C(E)$, where
$L f$ gives formally $\frac{\sigma^2}{2} f'' + b' f'$.
When $b', \sigma$ are locally bounded functions
then $\shd_L(E) = C^2(E)$.
In that case, the notion of martingale problem is (since the works of Stroock and Varadhan \cite{sv})
               is a concept related to solutions of SDEs in law.


\begin{definition} \label{D31}
	\begin{enumerate}	
		\item We say that a continuous stochastic process $X$ solves
		(with respect to a probability $\P$ on some measurable space
		$(\Omega,\shf)$)
		the
		martingale problem related to
		\begin{equation}\label{DOperatorL}
		\shl f := L f + \Gamma f',
		\end{equation}
		with initial condition $\nu = \delta_{x_0}, x_0 \in E$,
		with respect to a domain $\shd_L(E)$
		if
		\begin{equation}\label{lmp}
		M^f_t := f(X_t) - f(x_0) - \int_{0}^{t}\mathit{L}f(X_s)ds - \int_{0}^{t}f'(X_s)\Gamma(s, X^s)ds,
		\end{equation}	
		is a $\P$-local martingale for all $f \in \mathcal{D}_{\mathit{L}}(E)$.

		We will also say that the couple $(X,\P)$ is a solution of (or $(X,\P)$ solves) the
		martingale problem with respect to $\shd_L(E)$.
		\item If a solution exists we say that the martingale problem above
		{\it admits existence}.
		\item We  say that the martingale problem above
		{\it admits uniqueness} if any two solutions $(X^i,\P^i), i  = 1,2$
		(on some measurable space $(\Omega,\shf)$)
		have the same law.
	
	\end{enumerate}
	
\end{definition}
 In the sequel, when the measurable space $(\Omega,\shf)$
		is self-explanatory it will be often omitted.

Below we introduce the analogous notion of strong existence
and pathwise uniqueness for our martingale problem,
see also \cite{ORT1_PartI} for the case when $b'$
is the derivative of a continuous function and
\cite{russo_trutnau07} for the case $\Gamma = 0$.
In both cases we had $E= \R$.
\begin{definition} \label{DSolution}
	$ $
	\begin{enumerate}
		\item Let $(\Omega, \mathcal{F}, \P)$ be a probability space and let
		$ \mathfrak{F} = (\shf_t)$ be the canonical filtration associated with a
		fixed Brownian motion $W$.
		Let $x_0 \in E$.
		We say that a continuous $\mathfrak{F}$-adapted $E$-valued process $X$ such that
		$X_0 = x_0$ is a {\bf solution to
			the strong martingale problem}
		(related to \eqref{DOperatorL}, $\sigma$)
		with respect to $\shd_L(E)$ and $W$ (with related filtered probability space),
		if
		\begin{equation}\label{lmpBis}
                  f(X_t) - f(x_0) - \int_{0}^{t}\mathit{L}f(X_s)ds - \int_{0}^{t}f'(X_s)\Gamma(s, X^s)ds =
                  \int_{0}^{t}f'(X_s) \sigma(X_s) dW_s,
		\end{equation}	
		for all $f \in \shd_L(E)$.
		\item We say that the martingale problem
		related to \eqref{DOperatorL} and $\sigma$ 
		with respect to $\shd_L(E)$
		admits \textbf{strong existence}
		if for every $x_0 \in E$,
		given a filtered probability space
		$(\Omega, \mathcal{F}, \P, \mathfrak{F}),$
		where  $\mathfrak{F} = (\shf_t)$ is the canonical
                filtration associated with
		a Brownian motion $W$,
		there is a  process $X$ solving the strong martingale problem
		(related to \eqref{DOperatorL} and $\sigma$)
		with respect to $\shd_L(E)$
		and $W$
		with $X_0 = x_0$.
		\item We say that the martingale problem
		(related to \eqref{DOperatorL})
		with respect to $\shd_L(E)$
		admits \textbf{pathwise uniqueness}
		if given
		$  (\Omega, \mathcal{F}, \P)$
		and a Brownian motion $W$
		and $X^i, i = 1, 2$ are solutions to the strong martingale problem with respect to
		$\shd_L(E)$ and $W$
		with $\P[X_0^1 = X_0^2] = 1$
		then $X^1$ and $X^2$ are indistinguishable.
	\end{enumerate}
	
\end{definition}

The mention $E$ will be often omitted when $E = \R$.
For instance $C^1(E), C^2(E), \shd_L(E)$, will be simply denoted by
$C^1, C^2, \shd_L$.

\section{Martingale problem for Bessel processes}

\label{SBessel}
$ $

\subsection{Preliminary considerations}

\label{SBessPrelim}

$ $

In this section, we are going to
introduce and investigate well-posedness for
 a martingale problem related to a Bessel process.
In this section again $W$ will denote a standard Brownian motion.
 We recall that the rigorous definition of the Bessel process is the
following. A non-negative process $X$ is said to be
a Bessel process starting at $x_0$ with dimension
$\delta \ge 0$ (notation  $BES^{\delta}\left(x_0\right)$)
if $S = X^2$ is a squared Bessel process starting
at $s_0=x_0^2$ of dimension $\delta$. $S$ is
denoted by
$BESQ^{\delta}\left(s_0\right),$ we recall in particular
that it is the pathwise unique solution of \eqref{ESQBessel}.


As is shown in Proposition 2.13 in Chapter 5 of \cite{ks}
(see also \cite[Chapter 3]{zamb}) \eqref{ESQBessel} admits pathwise uniqueness.
Since $x \mapsto \sqrt{\vert x\vert}$ has linear growth it has weak
existence and so
by
Yamada-Watanabe theorem
it also admits strong existence.

\begin{remark}
 For $\delta > 1,$ we know that the Bessel process $X$
 fulfills
\begin{equation}\label{B}
X_t = x_0 + \frac{\delta - 1}{2}\int_{0}^{t} X_s^{-1}ds + W_t.
\end{equation}
We recall that for $\delta > 2$,
$X$ is even transient and it never touches zero, see \cite[Chapter XI]{Yor}.
As anticipated, when $\delta = 1$ or $\delta = 0$
$X$ is still a semimartingale. Unfortunately
if $0 < \delta < 1$ it is not the case, see Chapter 10 of \cite{mansuy},
it is just a Dirichlet process, i.e. the sum of a local martingale and a
zero quadratic variation process.
\end{remark}
Our point of view consists  in rewriting \eqref{B} under the form
\begin{equation}\label{Blog}
X_t = x_0 + \int_{0}^{t} b'(X_s)  ds + W_t,
\end{equation}
where
$W$ is a Brownian motion and $b'$ is the derivative of the function
 $b(x) = \frac{\delta - 1}{2}  \log \vert x \vert$,
 at least when $\delta < 1$. In other words we make use of
 the ''analytical'' p.v. of  $x \mapsto \frac{1}{ x}$ 
 which is the derivative of $\log.$
 That object is, on $\R$, a Schwartz distribution and not a function, which nevertheless coincides
 with $x \mapsto \frac{1}{ x}$ on $\R^*$.
 This indeed explains \eqref{B} and takes into account
 the ``relevant'' time spent by the Bessel process at zero.


 In the case $\delta = 1$, for similar reasons, and taking into account the fact that
 the Bessel process is a reflected Brownian motion, we naturally choose $b$ to be 
a Heaviside function so that $b'$ is the
$\delta$-Dirac measure at zero.

 We are going to construct two settings: one for $0 \leq \delta <1$ and another one for $\delta =1$.
In what follows, we should recall $\mathbb{R}^* = \mathbb{R}-\{0\}.$

\subsection{The framework for $0 \leq \delta < 1$}
$ $

\label{Sdeltasmaller1}

According to the considerations in Section \ref{SBessPrelim},
the natural form of the operator $L^\delta:= L$ (outside zero)
is expected to be of the form
            \begin{equation}\label{LDter}
              L^\delta f(x) = \frac{f''(x)}{2} + \frac{(\delta -1)f'(x)}{2x},
      \end{equation}
for $f \in C^2(\R^*)$.

      As anticipated, we fix $b:\mathbb{R} \rightarrow \mathbb{R}$,
        $b(x) =\frac{\delta - 1}{2}\log|x|, \ x \neq 0$ and $\sigma\equiv 1$.
        $x \mapsto \frac{\delta - 1}{2 x}, $ appearing in \eqref{LDter}
        coincides
  with $b'$ restricted to
  $\mathbb R^*$.
Formally speaking, $\Sigma$ as in \eqref{Sigmaintr} gives
  $\Sigma(x) = 2 b(x)$, so
\begin{equation} \label{hBessel}
\exp(-\Sigma(x)) = |x|^{1 - \delta}, x \in \R.
\end{equation}

The expression \eqref{LDter} can also be expressed as
\begin{equation} \label{DefLForm}
 L^{\delta} f(x) = \frac{\vert x \vert^{1-\delta}}{2} (\vert x \vert^{\delta-1} f')', x \neq 0.
\end{equation}
The problem is to provide a natural extension for $x = 0,$
which constitutes the critical point.

We have now to specify the natural domain 
of $L^\delta$,
 which is compatible
 with \eqref{DefLForm}.
 \begin{definition} \label{Domain_L_delta}
We will denote by $\shd_{L^\delta}$ the
set of $f \in C(\R) \cap C^2(\R_+) \cap C^2(\R_-)$ such that the following holds.
\begin{enumerate}[label=(\alph*)]
\item
 There is a continuous function $g: \R \rightarrow \R$  extending
 $x \mapsto f'(x) \vert x \vert^{\delta-1}, x \neq 0$.
\item
There is a continuous function $G: \R  \rightarrow \R$,
extending $x \mapsto g'(x) |x|^{1 - \delta}, x \neq 0,$  (i.e.
$ 2 L^\delta f(x), $ according to \eqref{DefLForm})
to $\R$.
\end{enumerate}
We define then
\begin{equation}\label{EG}
  L^{\delta}f:= \frac{G}{2}.
\end{equation}
\end{definition}
\begin{proposition}\label{domain}
  \begin{enumerate}
    \item  Suppose $\delta >0$.
      Then $  \shd_{L^{\delta}} = \shd_\delta := \{f \in C^2(\R)
      \vert f'(0)=0 \}$ and
      \begin{equation}\label{LDbister}
             L^{\delta}f(x) =
 \left\{\begin{array}{ccc}
	\frac{f''(x)}{2} + \frac{(\delta -1)f'(x)}{2x} &:& x \neq 0\\
\delta \frac{f''(0)}{2} &:&  x = 0.
	\end{array}
\right.
\end{equation}
       \item Suppose $\delta =0$.
    Then $  \shd_{L^{0}} = \shd_0,  $
where
\begin{equation} \label{LDter1}
  \shd_0:= \{ f \in
  C^1({\R})  \cap C^2(\R_+) \cap C^2(\R_-) \vert f'(0) = 0
  \}
\end{equation}
\end{enumerate}
 and
\begin{equation}\label{LDbisquater}
             L^{0}f(x) =
 \left\{\begin{array}{ccc}
	\frac{f''(x)}{2} - \frac{f'(x)}{2x} &:& x \neq 0\\
0 &:&  x = 0.
	\end{array}
\right.
\end{equation}
   \end{proposition}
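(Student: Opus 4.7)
The strategy is to prove both inclusions $\shd_{L^\delta}=\shd_\delta$ in each case and, simultaneously, to compute the value $L^\delta f(0)$. Since the formulas in \eqref{LDbister} and \eqref{LDbisquater} coincide with \eqref{DefLForm} away from $x=0$, the entire argument concerns the singular point $0$. My plan is to translate conditions (a) and (b) of Definition~\ref{Domain_L_delta} into regularity conditions on $f$ at the origin by one-sided Taylor expansions, and to verify the converse by running the same computation backwards.

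For the forward direction, take $f\in\shd_{L^\delta}$ with $0\le\delta<1$. Since $\delta-1<0$, the factor $|x|^{\delta-1}$ blows up at $0$, so continuity of the extension $g$ demanded by~(a) forces $f'(0^+)=f'(0^-)=0$; in particular $f\in C^1(\R)$ with $f'(0)=0$. The one-sided Taylor expansion $f'(x)=f''(0^\pm)x+o(x)$ then gives
\begin{equation*}
g(x) = f''(0^\pm)\,\sgn(x)\,|x|^{\delta} + o(|x|^{\delta}),
\end{equation*}
which tends to $0$ for $\delta>0$, so $g(0)=0$. On each half-line, a direct computation yields
\begin{equation*}
G(x)=g'(x)|x|^{1-\delta}= f''(x)+\frac{(\delta-1)f'(x)}{x}, \qquad x\neq 0,
\end{equation*}
and L'H\^opital produces $\lim_{x\to 0^\pm}G(x)=\delta f''(0^\pm)$. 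For $\delta>0$, the continuity of $G$ at $0$ forces $f''(0^+)=f''(0^-)$, promoting $f$ to $C^2(\R)$, and yields $G(0)=\delta f''(0)$; hence $L^\delta f(0)=\delta f''(0)/2$. For $\delta=0$, both one-sided limits of $G$ are automatically zero, so~(b) imposes nothing beyond the definition of $\shd_0$, and $L^0 f(0)=0$.

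Conversely, any $f\in\shd_\delta$ with $\delta>0$, resp.\ any $f\in\shd_0$, satisfies (a) and (b) by running the same Taylor expansions in reverse, which finishes the two equalities of domains. The main obstacle I anticipate is the careful bookkeeping of the one-sided limits at $0$: for $\delta>0$ it is condition~(b), not~(a), that supplies the equality $f''(0^+)=f''(0^-)$ and hence the upgrade from $C^1$ to $C^2$ at the origin, while for $\delta=0$ condition~(b) becomes vacuous at $0$ and the only genuine information at the singular point must be extracted from~(a); one has to match this to the definition of $\shd_0$ transparently before concluding.
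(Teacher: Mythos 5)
Your forward direction is correct and is essentially the paper's argument: condition (a) forces $f'(0^+)=f'(0^-)=0$ because $|x|^{\delta-1}$ blows up for $\delta<1$; the identity $G(x)=f''(x)+(\delta-1)\frac{f'(x)}{x}$ on $\R^*$ together with L'H\^opital gives $G(0^\pm)=\delta f''(0^\pm)$; for $\delta>0$ continuity of $G$ then yields $f''(0^+)=f''(0^-)$, hence $f\in C^2(\R)$ and $L^\delta f(0)=\delta f''(0)/2$, while for $\delta=0$ one gets $L^0f(0)=0$. For the converse with $\delta>0$, compressing it to ``run the expansions in reverse'' is defensible, since the verification really is the same computation the paper records explicitly (set $g(x)=f'(x)|x|^{\delta-1}$, $g(0)=0$, check continuity by Taylor/L'H\^opital, then check $G(0^+)=G(0^-)=\delta f''(0)$ using $f\in C^2$).

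The genuine gap is the converse inclusion for $\delta=0$, which is not a reversal of your forward computation. Your own expansion shows that for $\delta=0$ one has $g(x)=f'(x)/|x|\to f''(0^+)$ as $x\to 0^+$ and $\to -f''(0^-)$ as $x\to 0^-$; these one-sided limits are in general different and nonzero, so for a generic $f\in\shd_0$ the map $x\mapsto f'(x)|x|^{\delta-1}$ need not admit a continuous extension through $0$ (already $f(x)=x^2$ gives $g(x)=2\sgn(x)$). Hence for $\delta=0$ condition (a) is a real constraint at the origin, and the claim ``any $f\in\shd_0$ satisfies (a)'' is exactly the point you flag in your closing sentence but never discharge; as written, the inclusion $\shd_0\subset\shd_{L^0}$ is asserted rather than proved. (This is also the delicate spot in the paper's own proof, where continuity of $g$ at $0$ is obtained via L'H\^opital — an argument that is only conclusive when $\delta>0$.) To close the gap you must either verify (a) for $\delta=0$ under a reading of Definition \ref{Domain_L_delta} that makes the statement hold, or treat the case $\delta=0$ by a separate argument instead of appealing to the $\delta>0$ computation run backwards.
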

   \begin{proof}

  We first show the inclusion $ \shd_{L^\delta} \subset \shd_\delta $.
Suppose $f \in \shd_{L^\delta}$.
We have
\begin{equation} \label{Ef'}
\lim_{x \rightarrow 0} f'(x) = \lim_{x \rightarrow 0} \vert x
\vert^{1-\delta} g(x) = 0.
\end{equation}
This obviously implies that $f \in C^1(\R)$ and $f'(0) = 0$.
Taking into account \eqref{LDter}, we have
\begin{eqnarray*}
 f''(0+)&:=& \lim_{x \rightarrow 0+} f''(x) =  \lim_{x \rightarrow 0+}
             \left(G(x) - (\delta - 1) \frac{f'(x)}{x}\right) \\
  &=&
 G(0) - (\delta - 1) \lim_{x \rightarrow 0+}  \frac{f'(x)}{x} =
G(0) -  (\delta - 1)   f''(0+),\\
f''(0-)&:=& \lim_{x \rightarrow 0-} f''(x) =  \lim_{x \rightarrow 0-}
 \left(G(x) - (\delta - 1) \frac{f'(x)}{x}\right) \\ &=&
 G(0) - (\delta - 1) \lim_{x \rightarrow 0-}  \frac{f'(x)}{x} =
G(0) -  (\delta - 1)   f''(0-),
\end{eqnarray*}
by L'Hospital rule.
This implies that
\begin{equation} \label{EFii}
\delta f''(0+) = G(0) = \delta  f''(0-).
\end{equation}
To show that  $f \in \shd_\delta$, it remains to show that
$f''(0+) = f''(0-)$ when $\delta \neq 0$.
This obviously follows from \eqref{EFii}, which shows      the inclusion $ \shd_{L^\delta}    \subset  \shd_\delta$ for all $\delta \in [0,1[$.
Now, \eqref{EFii}, \eqref{LDter} and \eqref{EG} show in particular \eqref{LDbister} and \eqref{LDbisquater}.

    We prove now the opposite inclusion
$ \shd_\delta    \subset  \shd_{L^\delta}.  $
     Let $f \in \shd_\delta$, in particular such  that $f'(0) = 0$.
We need to prove that it fulfills the properties (a) and (b)
characterizing $\shd_{L^\delta}$.
 We set $g(x) := f'(x)|x|^{\delta - 1}, x \neq 0$ and $g(0) := 0$.
    By l'Hospital rule we can show that
    $\displaystyle\lim_{x \to 0} g(x) = 0,$ so that $g$ is continuous at zero.
 This proves property (a) characterizing $\shd_{L^\delta}$.
    Taking the derivative of $g$ on $\R^*$ we get
        \begin{equation}\label{g'}g'(x) = f''(x)|x|^{\delta - 1} + (\delta - 1)f'(x) \sgn(x)|x|^{\delta -2}.
        \end{equation}
       Concerning property (b),
as $x \mapsto G(x):=g'(x)|x|^{1 - \delta}$ is continuous on $\R^*$
it is enough to show
that $\displaystyle\lim_{x \to 0} G(x)$ exists.
By \eqref{g'} we obtain $$G(x) = f''(x) + (\delta - 1)\sgn(x)\frac{1}{|x|}f'(x) = f''(x) +
(\delta - 1)\frac{f'(x)}{x}, x \neq 0.$$
We recall that $f''(0+)$ and  $f''(0-)$ exist.
 Taking the limit
 when $x$ goes to zero from the right and from the left,  by L'Hospital rule,
 we get
 \begin{eqnarray*}
   G(0+) &=&  f''(0+) + (\delta - 1) f''(0+) = \delta f''(0+), \\
   G(0-) &=&  f''(0-) + (\delta - 1) f''(0-) = \delta f''(0-).
 \end{eqnarray*}
 Distinguishing the cases $\delta > 0$ (in this case
 $f''(0+) = f''(0-)$) and  $\delta = 0$,
we show that $G(0+) = G(0-)$ and finally
 $G$ extends continuously to $0$.
This concludes the proof of the two properties (a) and (b)
and so the inclusion $ \shd_\delta \subset \shd_{L^\delta}$.
\end{proof}
\begin{remark} \label{Rharmonic}
  In fact one could consider a larger domain
  $\hat \shd_{L^\delta}$ constituted by the functions
  $f \in C(\R) \cap C^2(\R^*)$ fulfilling the conditions
  (a) and (b) before \eqref{EG}.
  Consider for instance the $L^\delta$-harmonic function
  $h$ defined in \eqref{Eh}.
  That function does not belong to $\shd_{L^\delta}$
  because it has no second left and right-derivative in $0$,
but it is an element of $\hat \shd_{L^\delta}$.
 In fact that domain is too large for our purposes
of investigation of well-posedness.
Formulating the martingale problem
replacing $ \shd_{L^\delta}$ with $\hat \shd_{L^\delta}$,
it will be easier to show uniqueness, but more difficult
to formulate existence.
Suppose that $(X,\P)$ is a solution to previous martingale
problem, making use of the domain  $ \shd_{L^\delta}$.
The natural question is to know if $(X,\P)$ is still a
solution to the martingale problem formulated
making use of $ \hat \shd_{L^\delta}$ instead of $\shd_{L^\delta}$.
This will be possible under a restricting condition on the law of $X$,
see Section \ref{ExtDomain}; this condition will be fulfilled
by the Bessel process starting from a point $x_0 \neq 0$ for instance.
\end{remark}

In the sequel we will denote by
$ \shd_{L^\delta}(\R_+)$ the set of functions $f:\R_+ \rightarrow \R$
which are restrictions of functions $\hat f$ belonging to $ \shd_{L^\delta}$.
We recall that, sometimes, we will also denote $ \shd_{L^\delta}(\R):=  \shd_{L^\delta}$.
We will also denote $L^\delta f$ as the restriction to
$\R_+$ of  $L^\delta \hat f$. \eqref{LDbister} shows that
this notation is coherent.
This convention will be made also for $\delta = 1$ in Section \ref{S55}.

Starting from Section \ref{S53}, we will make use
of convergence properties for functions and processes
according to the remark below.

\begin{remark}\label{r1}
	$ $
\begin{enumerate}
\item	If $f:\mathbb{R} \rightarrow \mathbb{R}$ is  continuous
(therefore uniformly continuous on compacts) then $f_{n}(x) = f\left(x + \frac{1}{n}\right)$ converges to $f$ uniformly on compacts.
\item	Let $(\Omega, \shf, \P)$ be a probability space and $X$ a continuous stochastic process on $(\Omega, \shf, \P)$. If $f_n: \mathbb{R} \rightarrow \mathbb{R}$ is a sequence of functions that converges uniformly
 on compacts of $\mathbb R$ to a function $f$ then $f_n(X)$ converges to $f(X)$ u.c.p.
\end{enumerate}
\end{remark}

\subsection{The martingale problem in the full line case when $0 \le \delta <1$.}

\label{S53}

$ $


\begin{proposition}\label{S}
  Let $(\Omega, \shf,\P)$ be a probability space
 and
a
Brownian motion $W$.
Let $x_0 \ge 0, 0 \leq \delta < 1$.
Let $S$  be the solution of \eqref{ESQBessel}
(necessarily non-negative by comparison theorem) 	
with $s_0 = x_0^2,$ so that $X = \sqrt{S}$ is a $BES^{\delta}(x_0)$ process.

Then $X$
 solves the strong martingale problem with respect to $\shd_{L^{\delta}}$
and $W$.
In particular, for every   $f \in \shd_{L^{\delta}}$
\begin{equation}\label{lmp2}
 f(X_t) - f(X_0) - \int_{0}^{t}L^{\delta}f(X_s)ds =
\int_{0}^{t}f'(X_s) dW_s.
 \end{equation}
\end{proposition}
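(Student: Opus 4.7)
The plan is to derive the identity by applying It\^o's formula after a smooth regularization of $X = \sqrt{S}$, since $X$ itself is not a semimartingale when $0 < \delta < 1$. For $\epsilon > 0$, set $X^\epsilon_t := \sqrt{S_t + \epsilon}$, which is a strictly positive continuous semimartingale because $\phi(s) = \sqrt{s+\epsilon}$ is smooth on $\R_+$ and $S$ is a semimartingale adapted to $\mathfrak{F}^W$. Standard It\^o's formula applied first to $\phi(S_t)$ and then to $f(X^\epsilon_t)$ (which is legal since $f \in \shd_{L^\delta} \subset C^2(\R^*)$ and $X^\epsilon$ never vanishes) yields
\begin{equation*}
 f(X^\epsilon_t) - f(X^\epsilon_0) = \int_0^t B_\epsilon(s)\,dW_s + \int_0^t A_\epsilon(s)\,ds,
\end{equation*}
with $B_\epsilon(s) = f'(X^\epsilon_s)\,\sqrt{S_s}/X^\epsilon_s$ and
\begin{equation*}
 A_\epsilon(s) = \frac{\delta\, f'(X^\epsilon_s)}{2\, X^\epsilon_s} - \frac{S_s\, f'(X^\epsilon_s)}{2\,(X^\epsilon_s)^3} + \frac{S_s\, f''(X^\epsilon_s)}{2\,(X^\epsilon_s)^2}.
\end{equation*}

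Next I would pass to the limit $\epsilon \downarrow 0$ and identify the limiting drift with $L^\delta f(X_s)$. Pointwise, on $\{X_s > 0\}$ we have $X^\epsilon_s \to X_s$ and $S_s/(X^\epsilon_s)^2 \to 1$, so a direct computation gives
\begin{equation*}
 A_\epsilon(s) \longrightarrow \frac{f''(X_s)}{2} + \frac{(\delta-1)\, f'(X_s)}{2\, X_s} = L^\delta f(X_s).
\end{equation*}
The delicate point is the set $\{X_s = 0\}$: here $S_s = 0$ so the last two terms of $A_\epsilon(s)$ vanish, and using Proposition \ref{domain} (which enforces $f'(0) = 0$, so that $f'(x)/x \to f''(0)$ by l'H\^opital's rule), we get $A_\epsilon(s) \to \delta f''(0)/2 = L^\delta f(0)$ as required by \eqref{LDbister} and \eqref{LDbisquater}. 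Likewise $B_\epsilon(s) \to f'(X_s)$ everywhere (using $f'(0)=0$ on $\{X_s=0\}$).

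To upgrade pointwise convergence to convergence of the integrals I would localize by $\tau_N := \inf\{t : X_t \ge N\}$, so that $X^\epsilon_s \le \sqrt{N^2+1}$ on $[0, t\wedge \tau_N]$ for $\epsilon \le 1$. The bound $|f'(y)/y| \le \sup_{[0,N+1]} |f''|$ (mean value theorem combined with $f'(0)=0$) together with $S_s/(X^\epsilon_s)^2 = S_s/(S_s+\epsilon) \le 1$ then gives uniform $L^\infty$ bounds on both $A_\epsilon$ and $B_\epsilon$ on $[0,t\wedge\tau_N]$. Bounded convergence for the Lebesgue integral and the stochastic dominated convergence theorem for the It\^o integral conclude the argument, after sending $N \to \infty$ (using $\tau_N \to \infty$ a.s. by continuity of $X$). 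The main obstacle, and the very reason for choosing the domain $\shd_{L^\delta}$ the way it is, is precisely that the condition $f'(0)=0$ produces the cancellation of the two divergent terms involving $f'(X^\epsilon_s)/X^\epsilon_s$ on the critical set $\{X_s = 0\}$; this cancellation is essential both for the uniform estimates and for the pointwise identification of $\lim_{\epsilon \to 0} A_\epsilon(s)$ with $L^\delta f(X_s)$.
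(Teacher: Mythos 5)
Your proof is correct and follows essentially the same route as the paper: the regularization $X^\epsilon=\sqrt{S+\epsilon}$ is exactly the paper's $f_n(S_t)=f\bigl(\sqrt{S_t+\tfrac1n}\bigr)$, It\^o's formula is applied to the squared Bessel process, and the limit is identified through the continuous extension of $x\mapsto f'(x)/x$ by $f''(0+)$, made possible by $f'(0)=0$. The only difference is cosmetic: you pass to the limit via stopping-time localization and (stochastic) dominated convergence, whereas the paper uses uniform-on-compacts convergence of the composed functions together with u.c.p. convergence of the integrals.
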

\begin{remark} \label{RS}
\begin{enumerate}
\item
Suppose that $S$ is a non-negative solution of an SDE of the type \eqref{ESQBessel},
where  the Brownian motion $W$ is replaced by a continuous
semimartingale whose martingale component is a Brownian motion.
Then \eqref{lmp2} still holds for every $f \in \shd_{L^{\delta}}$.
\item For $\delta = 0$  and $x_0 =0$, $BESQ^{0}(0)$ is the null process.  By
 Proposition \ref{domain} $L^{0}f(0) = 0$ for all $f \in \shd_{L^0}$,
 obviously $f(0) - f(0) - \int_{0}^{t}L^{\delta}f(0)ds \equiv 0$
and \eqref{lmp2} holds.
\end{enumerate}

\end{remark}

\begin{prooff} (of Proposition \ref{S}).

We consider immediately the case of Remark \ref{RS} (1) and
suppose $W$ to be a semimartingale such that $[W]_t \equiv t$.
 Let $X = \sqrt{S}$, where $S$
is a $BESQ^{\delta}(s_0)$,
let $f \in \shd_{L^{\delta}}$ and
 define $f_n: \mathbb{R}_+ \rightarrow \mathbb{R}$ as $f_n(y) = f\left(\sqrt{y +\frac{1}{n}}\right)$.
Clearly $f_n \in C^2(\mathbb{R}_+)$.
Applying It\^o's formula we have
\begin{align}\label{ES}
  f_n(S_t) = f_n(S_0) &+
                        \int_{0}^{t} \frac{f'\left(\sqrt{S_s +
              \frac{1}{n}}\right)}{\sqrt{S_s + \frac{1}{n}}}\sqrt{S_s}dW_s + \int_{0}^{t}\delta\frac{f'\left(\sqrt{S_s + \frac{1}{n}}\right)}{2 \sqrt{S_s + \frac{1}{n}}} ds \nonumber\\ &+ \int_{0}^{t}\left[\frac{1}{2}f''\left(\sqrt{S_s + \frac{1}{n}}\right) - \frac{1}{2}\frac{f'\left(\sqrt{S_s + \frac{1}{n}}\right)}{\sqrt{S_s + \frac{1}{n}}}\right]\left[\frac{S_s}{S_s + \frac{1}{n}}\right] ds,	
\end{align}
which can be rewritten as
\begin{align}\label{ES2}
f_n(S_t) = f_n(S_0) &+ \int_{0}^{t}\frac{f'\left(\sqrt{S_s + \frac{1}{n}}\right)}{\sqrt{S_s + \frac{1}{n}}}\sqrt{S_s}dW_s + \int_{0}^{t}\frac{1}{2}f''\left(\sqrt{S_s +
\frac{1}{n}}\right)\left[\frac{S_s}{S_s + \frac{1}{n}}\right]ds  \nonumber\\ &+ \frac{1}{2}\int_{0}^{t}\frac{f'\left(\sqrt{S_s + \frac{1}{n}}\right)}{\sqrt{S_s + \frac{1}{n}}}\left[\delta -
\frac{S_s}{S_s + {\dfrac{1}{n}}}\right]ds.	
\end{align}
The first integral converges to
\begin{equation}\label{EFirst}
\int_{0}^{t}f'\left(\sqrt{S_s}\right)dW_s,
\end{equation}
u.c.p. by Remark \ref{r1},
since $f' \in C(\R_+)  $.

As $y \mapsto f''\left(\sqrt{y + \frac{1}{n}}\right)$ is continuous,
 by Remark \ref{r1} and Lebesgue's dominated convergence theorem
 the second integral in
\eqref{ES2}
converges u.c.p. to
\begin{equation} \label{EThird}
 \frac{1}{2}\int_{0}^{\cdot}f''\left(\sqrt{S_s}\right)ds.
\end{equation}
We set $\ell:\R_+ \longrightarrow \R$, the continuous function defined by
$$ \ell(x) = \left\{\begin{array}{ccc}
\dfrac{f'(x)}{x} &:& x \neq 0.\\
f''(0+) &:&  x = 0.
\end{array}\right.$$
The third integral can be rewritten as $$\frac{1}{2}\int_{0}^{t}\ell\left(\sqrt{S_s + \frac{1}{n}}\right)\left[\delta -
\dfrac{S_s}{S_s + {\dfrac{1}{n}}}\right]ds.$$
By Remark \ref{r1} and Lebesgue's dominated convergence the previous expression converges u.c.p.
to \begin{equation}\label{A11}
\int_{0}^{t}\ell\left(\sqrt{S_s}\right)\left(\dfrac{\delta - 1}{2}\right)ds.
\end{equation}
Finally \eqref{EFirst}, \eqref{EThird} and \eqref{A11} allow to conclude
the proof of \eqref{lmp2}.
\end{prooff}	

\begin{corollary}\label{sts1}
Let $x_0 \in \R, 0 \leq \delta < 1$.
The martingale problem with respect to $\shd_{L^\delta}$, with initial
condition $X_0 = x_0$ admits strong existence.
More precisely we have the following.
If $x \ge 0$, we denote by $X^{x}$ the $BES^{\delta}(x)$ process, being the square root of a
solution of \eqref{ESQBessel} with $s_0 = x^2$.
\begin{enumerate}
\item
If $x_0 \ge 0$, $X^{x_0}$
solves the strong martingale problem with respect to
 $\shd_{L^{\delta}}$ and $W$.
\item If  $x_0 \le 0$, $- X^{-x_0}$
  solves the same
strong martingale problem with respect to
 $\shd_{L^{\delta}}$ and $- W$.
\end{enumerate}
\end{corollary}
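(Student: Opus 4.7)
The plan is to deduce both assertions from Proposition \ref{S}, combined for part (2) with a reflection argument exploiting the invariance of $L^\delta$ and $\shd_{L^\delta}$ under $x \mapsto -x$. For (1), I would fix a Brownian motion $W$ on $(\Omega,\shf,\P)$ with its canonical filtration. Since $s \mapsto 2\sqrt{|s|}$ has linear growth and \eqref{ESQBessel} admits pathwise uniqueness (as recalled via \cite{ks} in Section \ref{SBessPrelim}), the Yamada--Watanabe theorem supplies a non-negative strong solution $S$ with $s_0 = x_0^2$, so $X^{x_0}:=\sqrt{S}$ is $\mathrm{BES}^\delta(x_0)$ by definition; Proposition \ref{S} then directly yields the strong-martingale-problem identity for $X^{x_0}$ with respect to $\shd_{L^\delta}$ and $W$. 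The case $\delta=0$, $x_0=0$ collapses to the null process and is covered by Remark \ref{RS}(2).

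For (2) the first step is to verify that the reflection $\check f(x):=f(-x)$ preserves $\shd_{L^\delta}$ and satisfies $\check f'(x) = -f'(-x)$ together with $L^\delta\check f(x) = L^\delta f(-x)$. Using the explicit description of $\shd_{L^\delta}$ from Proposition \ref{domain} (namely $f\in C^2(\R)$ with $f'(0)=0$ if $\delta>0$, or $f\in C^1(\R)\cap C^2(\R_+)\cap C^2(\R_-)$ with $f'(0)=0$ if $\delta=0$) and the formulas \eqref{LDbister}--\eqref{LDbisquater}, this is routine. Given $W$ and $x_0 \le 0$, applying part (1) with the non-negative initial condition $-x_0$ produces a process $X^{-x_0}$ satisfying \eqref{lmpBis} driven by $W$. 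Testing this identity against $\check f$ and inserting the symmetry relations yields
\begin{equation*}
f(-X^{-x_0}_t) - f(x_0) - \int_0^t L^\delta f(-X^{-x_0}_s)\,ds = -\int_0^t f'(-X^{-x_0}_s)\,dW_s.
\end{equation*}
Setting $Y:=-X^{-x_0}$ and noting that $-W$ is a Brownian motion adapted to the same canonical filtration, the right-hand side equals $\int_0^t f'(Y_s)\,d(-W)_s$, so this is exactly \eqref{lmpBis} for $Y$ with respect to $\shd_{L^\delta}$ and $-W$, with $Y_0 = x_0$, as asserted.

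There is no serious obstacle: the Yamada--Watanabe step and Proposition \ref{S} do all the analytic work, and the only mildly nontrivial point is the symmetry verification for the domain and operator, which amounts to bookkeeping with Proposition \ref{domain}.
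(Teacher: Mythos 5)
Your proposal is correct and follows essentially the same route as the paper: obtain $X^{x_0}=\sqrt{S}$ from a strong solution of \eqref{ESQBessel} and invoke Proposition \ref{S} for $x_0\ge 0$, then handle $x_0\le 0$ by testing against the reflected function $f(-\cdot)$, using the invariance of $\shd_{L^\delta}$ and $L^\delta$ under $x\mapsto -x$ and the fact that $-W$ is again a Brownian motion. The only difference is cosmetic: you spell out the symmetry check via Proposition \ref{domain}, which the paper treats as obvious.
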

\begin{preuve}

Let $(\Omega, \mathcal{F}, \P)$ be  a  probability space and a Brownian motion $W$.
We set $s_0 = x_0^2$. We know that
\eqref{ESQBessel} admits a strong solution $S$.
  Then, by
 Proposition \ref{S} $X = \sqrt{S}$ is a solution for
the strong martingale problem with respect to $\shd_{L^\delta}$ and $W$
with initial condition $\vert x_0 \vert$.

 So, if $x_0 \ge 0$ then
 strong existence is established. If $x_0 < 0$ then
we show below that $-X$ also solves the strong martingale problem
with respect to $\shd_{L^\delta}$ and $-W$.

Let $f \in \shd_{L^{\delta}}$. Then
obviously $f_{-}(x) := f(-x) \in \shd_{L^\delta}$
and $$L^{\delta} f_{-}(x) = L^{\delta} f(-x).$$
Therefore, since $X$ solves the strong martingale problem
with respect to $\shd_{L^\delta}$ and $W$,
 for all $f \in \shd_{L^{\delta}}$ we have
$$ {f}_{-}(X_t) - f_{-}(x_0) - \int_{0}^{t}L f_{-}(X_s)ds =  \int_0^t f_{-}'(X_s)
 dW_s, $$
 which implies
$$
f(-X_t) - f(-x_0) - \int_{0}^{t}L^{\delta}f(-X_s)ds = \int_0^t f'(-X_s) d(-W)_s.$$
  Thus $-X$ also solves the strong martingale problem
with respect to $\shd_{L^{\delta}}$ and $- W$.

\end{preuve}	

\begin{proposition}\label{nu}
Let us suppose 	$0 < \delta < 1$.
The martingale problem with respect to $\shd_{L^{\delta}}$
 does not admit (in general) uniqueness in law.
\end{proposition}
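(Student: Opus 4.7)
The strategy is to exhibit, for a single initial condition, two solutions to the martingale problem with different laws. The natural candidate is $x_0 = 0$, where the reflection symmetry $x \mapsto -x$ can be exploited: for $\delta > 0$, Proposition \ref{domain} gives $\shd_{L^{\delta}} = \{f \in C^2(\R) : f'(0) = 0\}$ with $L^\delta f(0) = \delta f''(0)/2$, and both the domain and the operator are invariant under the pushforward $f(x) \mapsto f(-x)$.

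Concretely, fix a Brownian motion $W$ on a probability space $(\Omega, \shf, \P)$, let $S$ be the pathwise unique solution of \eqref{ESQBessel} with $s_0 = 0$, and set $X^{+} := \sqrt{S}$. By Proposition \ref{S}, $(X^{+}, \P)$ solves the strong martingale problem with respect to $\shd_{L^{\delta}}$ and $W$ starting at $0$; in particular it is a solution of the martingale problem in the sense of Definition \ref{D31}. Applying Corollary \ref{sts1}(2) with $x_0 = 0$, the process $X^{-} := -X^{+}$ is a solution to the strong martingale problem with respect to $\shd_{L^{\delta}}$ and $-W$, and a fortiori to the martingale problem of Definition \ref{D31}, starting again from the point $0$.

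It remains to observe that the laws of $X^{+}$ and $X^{-}$ on $C([0,T])$ are distinct. Since $0 < \delta < 2$, the origin is instantaneously reflecting for $BES^{\delta}(0)$: from the explicit transition density of $BESQ^{\delta}$, for any $t > 0$ one has $\P(X^{+}_t > 0) = 1$, whereas by construction $X^{-}_t \leq 0$ so that $\P(X^{-}_t > 0) = 0$. Hence the two laws differ and uniqueness in law fails for the initial condition $x_0 = 0$, proving the proposition. There is no real obstacle in the argument; the only non-trivial ingredient is the invariance of $\shd_{L^{\delta}}$ under reflection, which is immediate from the description of the domain in Proposition \ref{domain}, together with Corollary \ref{sts1}(2) which does the essential bookkeeping.
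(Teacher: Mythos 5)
Your proposal is correct and follows essentially the same route as the paper: both exhibit $X^{+}=\sqrt{S}$ and $X^{-}=-\sqrt{S}$ for $S$ a $BESQ^{\delta}(0)$ as two solutions started at $x_0=0$ and note their laws differ because one is non-negative and the other non-positive. Your extra remark that $\P(X^{+}_t>0)=1$ for $t>0$ (so the two solutions are genuinely distinct, which uses $\delta>0$) is a slightly more careful justification of the final step than the paper's, but the argument is the same.
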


\begin{preuve}
	
Let $S$ be the $BESQ^{\delta}(0)$. By Corollary
\ref{sts1}, we know that $X^+ = \sqrt{S}$ and $X^- = - \sqrt{S}$
solve the martingale problem with respect to an underlying
probability $\P$.

Obviously $X$ does not have the same law as $-X$ since $X$ is positive
and $-X$ is negative.
\end{preuve}

\begin{remark} \label{ROtherConditions}
If the initial condition $x_0$ is different from zero, for instance positive,
then uniqueness also fails since we can exhibit two solutions.
The first one is still the classical Bessel process, the second one
behaving as the first one until it reaches zero and then it behaves like
minus a Bessel. Such a stopping time always exists since
the Bessel process hits zero, see the considerations after Corollary (1.4)
in Chapter XI in \cite{Yor}.
\end{remark}
For proving indeed results for uniqueness, we will need the following.
\begin{proposition}
\label{PAllBessel}
 Let $0 \le \delta < 1$.
Let $(X, \P)$ be a solution (not necessarily positive) of
the
 martingale problem with respect to $\shd_{L^\delta}$.
Then $S = X^2$ is a squared Bessel process.

\end{proposition}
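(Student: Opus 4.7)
The plan is to reduce the statement to the weak uniqueness of $BESQ^\delta(s_0)$, which is recalled right after \eqref{ESQBessel}. Concretely, I want to show that $S = X^2$ is a continuous semimartingale satisfying (on a possibly enlarged probability space) the SDE \eqref{ESQBessel} with $s_0 = x_0^2$; then, by Yamada–Watanabe as already invoked in the excerpt, $S$ has the law of $BESQ^\delta(s_0)$. To carry this out, I will identify both the drift and the quadratic variation of $S$ by testing the martingale problem against the two polynomials $f(x)=x^2$ and $g(x)=x^4$.

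First I check that $f,g\in \shd_{L^\delta}$ for every $0\le\delta<1$: both are $C^2(\R)$ with $f'(0)=g'(0)=0$, so they belong to $\shd_\delta$ (for $\delta>0$) and to $\shd_0$ (for $\delta=0$) by Proposition \ref{domain}. Using the explicit formulas \eqref{LDbister}–\eqref{LDbisquater} one checks, uniformly in $\delta\in[0,1)$, that $L^\delta f \equiv \delta$ and $L^\delta g(x)=(2\delta+4)x^2$. Applying Definition \ref{D31} to the solution $(X,\P)$ of the martingale problem (with $\Gamma\equiv 0$), the processes
\begin{equation*}
M_t := S_t - s_0 - \delta t, \qquad N_t := S_t^2 - s_0^2 - (2\delta+4)\int_0^t S_u\,du
\end{equation*}
are continuous $\P$-local martingales. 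In particular $S = s_0 + \delta t + M_t$ is a continuous semimartingale.

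Next I recover $[S]_t=[M]_t$ by Itô's formula applied to $S_t^2$:
\begin{equation*}
S_t^2 = s_0^2 + 2\int_0^t S_u\,dM_u + 2\delta \int_0^t S_u\,du + [M]_t.
\end{equation*}
Subtracting this identity from the expression for $N_t$, and using that $\int_0^{\cdot} S_u\,dM_u$ is a local martingale, the process $[M]_t - 4\int_0^t S_u\,du$ is a continuous local martingale of bounded variation starting at $0$, hence identically zero. Therefore $d[M]_t = 4 S_t\,dt = (2\sqrt{S_t})^2\,dt$.

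Finally, since $M$ is a continuous local martingale whose quadratic variation is absolutely continuous with density $(2\sqrt{S_t})^2$, a standard representation theorem produces a Brownian motion $B$ on a (possibly enlarged) filtered probability space such that $M_t=\int_0^t 2\sqrt{S_u}\,dB_u$, and consequently $dS_t = 2\sqrt{S_t}\,dB_t + \delta\,dt$, which is exactly \eqref{ESQBessel}. Weak uniqueness then yields that $S$ is a $BESQ^\delta(s_0)$, as desired. The only step requiring slight care — and thus the main, though mild, obstacle — is the passage to the Brownian representation of $M$, since the integrand $2\sqrt{S_\cdot}$ may vanish on a set of positive Lebesgue measure; this is handled by the classical enlargement of the probability space, after which the weak-uniqueness conclusion for $BESQ^\delta$ (which is a statement about the law of $S$ alone) is unaffected.
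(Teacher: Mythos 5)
Your proposal is correct and follows essentially the same route as the paper's proof: you test $f(x)=x^2$ and $f(x)=x^4$ (both in $\shd_{L^\delta}$ with $L^\delta f\equiv\delta$ and $L^\delta g(x)=2(2+\delta)x^2$), identify $[M]_t=4\int_0^t S_u\,du$ by comparing the two semimartingale decompositions of $X^4$, and then exhibit $S$ as a weak solution of \eqref{ESQBessel}, concluding by the (Yamada--Watanabe) uniqueness of $BESQ^\delta$. The only difference is the last step: the paper sets $W_t=\int_0^t dM^1_s/(2|X_s|)$ and invokes L\'evy's characterization, while you obtain the Brownian motion from the representation theorem on an enlarged space, which is if anything more careful about the set where $S$ vanishes.
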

\begin{preuve}
	
We first show that
 \begin{equation}\label{Mt}
M^1_t := X_t^{2} - x_0^2 - \delta t
\end{equation}
 is a local martingale and
  \begin{equation}\label{e3}
 X^4_t = x^4_0 + 2(2 + \delta)\int_{0}^{t}X^2_s ds + M^2_t,
 \end{equation}
where $M^2$ is a local martingale.
Clearly, $f_1(x) :=  x^2 \in \shd_{L^{\delta}}$
because $f \in C^2(\R)$ and
$f_1'(0) = 0$.
By Proposition \ref{domain}
 $L^{\delta}f_1(x) \equiv \delta,$ which shows \eqref{Mt}.
On the other hand, obviously
$f_2(x) := x^4 \in \shd_{L^{\delta}}$
and then, by Proposition \ref{domain},
 $L^{\delta}f_2(x) = 2(2 + \delta)x^2$, so  \eqref{e3} follows.
 Now, setting $S := X^{2}$, by integration by parts
and using \eqref{Mt} we have
\begin{equation}\label{e2ter}[M^1]_t = [S]_t = S^{2}_t - s^2_0 - 2\int_{0}^{t} S_s dS_s = X^4_t - x^4_0 - 2\delta\int_{0}^{t}X_{s}^2ds + M_t,\end{equation}
where $M$ is a local martingale.
This implies
\begin{equation}\label{e2bis}
X^4_t = x^4_0 + 2\delta\int_{0}^{t}X_{s}^2ds + [M^1]_t -  M_t.
\end{equation}
We remark that \eqref{e2bis} and \eqref{e3} provide two decompositions
of the semimartingale $X^4$. By uniqueness of the semimartingale decomposition
we can identify the bounded variation component, which implies
 \begin{equation} \label{E[M]}
[M^1]_t = 4\int_{0}^{t}X^2_s ds, t \in  [0,T].\end{equation}
Consequently the process
 $$W_t:= \int_{0}^{t}\dfrac{dM^1_s}{2|X_s|},$$ is a Brownian motion
taking into account the fact that $[W]_t \equiv t$
together with L\'evy's characterization of Brownian motion.
 Hence, the process $S$ is a (weak) solution of the
 SDE \begin{equation}\label{dS}dS_s = \delta ds + 2\sqrt{|S_s|}dW_s,
\end{equation}
 which shows that $S$ is a $BESQ^{\delta}(s_0)$, $s_0 = x^2_0$.

\end{preuve}


Proposition \ref{nu} shows that no uniqueness on the real line holds when $\delta > 0$. Surprisingly, if $\delta = 0$ then uniqueness holds.
\begin{remark} \label{delta=0}
Suppose $\delta = 0$.
\begin{enumerate}
\item Assume $x_0 = 0$. By Proposition \ref{PAllBessel}
if $(X, \P)$ is a solution of the martingale problem, then $X^2$ is (under $\P$) a
$BESQ^0(0)$ which is the null process; this fact
 shows uniqueness.
\item Suppose $x_0$ different from zero
(for instance strictly positive). If $(X,\P)$
is a solution to the strong martingale problem,
then, by Proposition \ref{PAllBessel}, under $\P$,
 $S:=X^2$ is a $BESQ^{0}(x^2_0)$.

In particular $S$ is a solution
of \eqref{ESQBessel} with respect to some suitable Brownian motion $W$.
Then, the strong Markov property shows that, whenever $S$
reaches zero it is forced to remain there.
\end{enumerate}
\end{remark}

At the level of strong martingale problem we have the following.

\begin{proposition}
\label{PAllBesselStrong}
 Let $0 \le \delta < 1$.
Let $X$  be a non-negative solution  to the strong martingale problem
with respect to $\shd_{L^\delta}, \sigma$ and a Brownian motion $W$.
Then $S = X^2$ is a solution to \eqref{ESQBessel}.
\end{proposition}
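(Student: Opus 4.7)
The plan is to apply the strong martingale problem identity directly to the single test function $f_1(x) = x^2$. The key difference compared with the proof of Proposition \ref{PAllBessel} is that here the driving Brownian motion $W$ is part of the data of the strong martingale problem, so there is no need to reconstruct the martingale component of $X^2$ via L\'evy's characterization, nor to invoke uniqueness of the semimartingale decomposition of $X^4$; the identification of the driving noise is already built into Definition \ref{DSolution}.

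First I would verify that $f_1(x) = x^2$ belongs to $\shd_{L^\delta}$: this is immediate from Proposition \ref{domain}, since $f_1 \in C^2(\R)$ and $f_1'(0) = 0$. Applying the same proposition, $L^\delta f_1 \equiv \delta$ on all of $\R$. Writing \eqref{lmpBis} with $\Gamma = 0$, $\sigma \equiv 1$ and $f = f_1$ then yields
\begin{equation*}
X_t^2 - x_0^2 - \delta t = \int_0^t 2 X_s \, dW_s.
\end{equation*}
Using the non-negativity hypothesis $X \ge 0$, one has $X_s = \sqrt{X_s^2} = \sqrt{|S_s|}$, where $S := X^2$ and $s_0 := x_0^2$. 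Substituting, the displayed identity becomes exactly
\begin{equation*}
S_t = s_0 + \delta t + \int_0^t 2\sqrt{|S_s|}\,dW_s,
\end{equation*}
which is the integral form of \eqref{ESQBessel}.

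There is essentially no obstacle in this proof; the entire content reduces to choosing the right test function and using $X \ge 0$ to convert $2X_s$ into $2\sqrt{|S_s|}$. The only point deserving attention is that the strong martingale problem genuinely delivers the stochastic-integral representation with respect to the prescribed $W$ (and not merely with respect to some auxiliary Brownian motion obtained a posteriori), which is exactly what upgrades the conclusion from the weak statement of Proposition \ref{PAllBessel} to the strong one claimed here.
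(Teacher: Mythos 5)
Your proposal is correct and coincides with the paper's own argument: the paper likewise applies the identity \eqref{lmpBis} to the single test function $f_1(x)=x^2$ (with $L^\delta f_1\equiv\delta$ from Proposition \ref{domain}) and uses non-negativity of $X$ to write $2X_s=2\sqrt{|S_s|}$, yielding \eqref{ESQBessel} directly with the prescribed $W$. Your remark that no L\'evy characterization or semimartingale-decomposition argument is needed here, unlike in Proposition \ref{PAllBessel}, is exactly the point.
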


 \begin{preuve}
	
   Let us suppose that $X$ is a solution of the strong martingale problem with respect to $ \shd_{L^{\delta}}$
   and a Brownian motion $W.$
   Setting $S := X^2$ and applying \eqref{lmpBis} with $f_1(x) = x^2$ we get
   $$ S_t = s_0 + 2 \int_0^t \sqrt{|S_s|} dW_s + \delta t, t \in [0,T], $$
   with $s_0 = x_0^2. $
 \end{preuve}	

\subsection{The martingale problem in the
$\mathbb{R}_+$-case}

\label{S54}

$  $

We remain still with the case $0 \le \delta < 1$.
Let $(\Omega, \mathcal{F}, \P)$ be a probability space and a Brownian motion $W$.
We will  be interested in non-negative solutions $X$ for the strong martingale problem
with respect to
$\shd_{L^{\delta}}(\mathbb{R_+})$ and $W$, which means that
	\begin{equation}\label{lmp3}
	f(X_t) - f(X_0) - \int_{0}^{t}L^{\delta}f(X_s)ds  = \int_{0}^{t}f'(X_s)dW_s,
	\end{equation}
for all $f \in \shd_{L^{\delta}}(\mathbb{R}_+)$.
Proposition \ref{P54} below states the existence result.
It follows directly from the $\mathbb{R}$-case, see Proposition \ref{S}.
\begin{proposition} \label{P54} Let $0 \leq \delta < 1$.
The process  $BES^{\delta}(x_0)$ as stated in Proposition
\ref{S} solves the strong martingale
 problem with respect to
$\shd_{L^{\delta}}(\mathbb{R_+})$ and $W$.
In particular, the martingale problem related to $\shd_{L^{\delta}}(\R_+)$ admits strong existence.
\end{proposition}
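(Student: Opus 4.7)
The plan is to derive this $\R_+$-version directly from Proposition \ref{S} by exploiting the non-negativity of the Bessel process together with the fact that the domain $\shd_{L^{\delta}}(\R_+)$ is defined by restriction from $\shd_{L^{\delta}}$.

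First, let $X = \sqrt{S}$, where $S$ is a $BESQ^{\delta}(x_0^2)$ driven by the given Brownian motion $W$, so that $X$ is a $BES^{\delta}(x_0)$ taking values in $\R_+$. Fix an arbitrary $f \in \shd_{L^{\delta}}(\R_+)$. By the convention introduced just before Remark \ref{r1}, there exists $\hat f \in \shd_{L^{\delta}}$ whose restriction to $\R_+$ coincides with $f$, and the notation $L^{\delta} f$ is defined precisely as the restriction of $L^{\delta} \hat f$ to $\R_+$.

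Next, I would apply Proposition \ref{S} to the extension $\hat f \in \shd_{L^{\delta}}$. This yields, almost surely for every $t \ge 0$,
\begin{equation*}
\hat f(X_t) - \hat f(X_0) - \int_{0}^{t} L^{\delta}\hat f(X_s)\,ds = \int_{0}^{t} \hat f'(X_s)\,dW_s.
\end{equation*}
Since $X_s \ge 0$ for all $s$, one has $\hat f(X_s) = f(X_s)$, $\hat f'(X_s) = f'(X_s)$, and $L^{\delta}\hat f(X_s) = L^{\delta} f(X_s)$. Substituting these identities gives exactly \eqref{lmp3}. As $f$ was arbitrary in $\shd_{L^{\delta}}(\R_+)$, the process $X$ solves the strong martingale problem with respect to $\shd_{L^{\delta}}(\R_+)$ and $W$; strong existence for every initial condition $x_0 \ge 0$ follows because \eqref{ESQBessel} admits a strong solution starting at $s_0 = x_0^2$.

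There is essentially no genuine obstacle here: all the analytic content (the passage to the limit in the It\^o formula applied to the regularized functions $f_n(y) = f(\sqrt{y + 1/n})$) was already carried out in Proposition \ref{S}, and the present statement is a soft corollary. The only point worth flagging is the coherence of the notation $L^{\delta} f$ on $\R_+$, which is guaranteed by the explicit formula \eqref{LDbister}: two different extensions of $f$ to $\R$ inside $\shd_{L^{\delta}}$ necessarily agree on $\R_+$ together with their first two derivatives, so $L^{\delta} f$ is well-defined independently of the choice of $\hat f$.
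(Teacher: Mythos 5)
Your proposal is correct and matches the paper's treatment: the paper gives no separate argument for Proposition \ref{P54}, simply noting that it "follows directly from the $\R$-case, see Proposition \ref{S}," which is exactly the restriction-of-extensions argument you spell out, including the coherence of the notation $L^{\delta}f$ on $\R_+$ already observed in the text after Remark \ref{Rharmonic}. Nothing is missing; your write-up just makes the routine substitution $\hat f(X_s)=f(X_s)$ explicit.
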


\begin{proposition}\label{su}
	The martingale problem with respect to
$\shd_{L^{\delta}}(\mathbb{R_+})$ and $W$
admits pathwise uniqueness.
\end{proposition}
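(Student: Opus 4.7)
The plan is to reduce pathwise uniqueness for the strong martingale problem on $\R_+$ to pathwise uniqueness of the squared Bessel SDE \eqref{ESQBessel}, which is already available via Yamada--Watanabe (as noted in Section \ref{SBessPrelim}).

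Concretely, let $(\Omega,\shf,\P)$ carry a Brownian motion $W$ and suppose $X^1,X^2$ are two non-negative continuous adapted processes, both solving \eqref{lmp3} with respect to $\shd_{L^\delta}(\R_+)$ and the same $W$, with $X^1_0=X^2_0=x_0\ge 0$. The key observation is that the proof of Proposition \ref{PAllBesselStrong} only requires the test function $f_1(x)=x^2$, which (as verified in the proof of Proposition \ref{PAllBessel}) lies in $\shd_{L^\delta}$ since $f_1'(0)=0$, and whose restriction therefore lies in $\shd_{L^\delta}(\R_+)$. Applying \eqref{lmp3} with $f_1$ and using $L^\delta f_1 \equiv \delta$ (by Proposition \ref{domain}) gives, for $i=1,2$,
\begin{equation*}
S^i_t := (X^i_t)^2 = x_0^2 + 2\int_0^t X^i_s\, dW_s + \delta t.
\end{equation*}
Since $X^i\ge 0$, we have $X^i=\sqrt{S^i}$, so this is exactly equation \eqref{ESQBessel} driven by the \emph{same} Brownian motion $W$ with the same initial condition $s_0=x_0^2$.

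Pathwise uniqueness for \eqref{ESQBessel}, recalled in Section \ref{SBessPrelim} (via Proposition 2.13, Chapter 5 of \cite{ks}), now forces $S^1_t=S^2_t$ for all $t\ge 0$ almost surely. Taking square roots and using the non-negativity of both $X^1$ and $X^2$, we conclude $X^1$ and $X^2$ are indistinguishable.

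I do not expect any real obstacle: the delicate issue—namely, how to get a useful equation out of the martingale problem when $b'$ is only a distribution—has already been dealt with by the choice of the domain $\shd_{L^\delta}$, which makes $f_1(x)=x^2$ admissible and collapses the singular drift to the harmless constant $\delta$. The only point that deserves attention in the write-up is checking that $f_1|_{\R_+}\in \shd_{L^\delta}(\R_+)$ and that the stochastic integral $\int_0^\cdot f_1'(X^i_s)\,dW_s = 2\int_0^\cdot X^i_s\,dW_s$ coincides (up to the factor $2\sqrt{S^i_s}=2X^i_s$) with the driving term of the squared Bessel SDE, so that the Yamada--Watanabe comparison applies with the \emph{same} $W$ on both sides.
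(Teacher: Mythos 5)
Your proof is correct and follows essentially the same route as the paper: the paper invokes Proposition \ref{PAllBesselStrong} (whose proof is exactly your application of \eqref{lmp3}/\eqref{lmpBis} with $f_1(x)=x^2$ and $L^\delta f_1\equiv\delta$) to identify $S^i=(X^i)^2$ as solutions of \eqref{ESQBessel} driven by the same $W$, and then concludes by pathwise uniqueness of \eqref{ESQBessel} together with non-negativity of the solutions. You merely inline that intermediate proposition, so there is nothing substantively different.
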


\begin{preuve}

Let us suppose that $(X, \P)$ is a  solution of the  martingale problem
with respect to $\shd_{L^{\delta}}(\mathbb{R_+})$
and $W$.
This implies the same with respect to $\shd_{L^{\delta}}$.
By Proposition \ref{PAllBesselStrong} $S = X^2$
is a solution of \eqref{ESQBessel} for some Brownian motion $W$.
The result follows by the pathwise uniqueness of the SDE
\eqref{ESQBessel} and the positivity of $X$.

\end{preuve}
\subsection{The martingale problem related to an extended domain}
\label{ExtDomain}

In this section we answer to the question raised in Remark
\ref{Rharmonic}. Indeed, for some aspects, one could be interested
in a formulation of the martingale problem with respect to the extended domain $\hat{D}_{L^{\delta}}$
defined in Remark \ref{Rharmonic} in order to include the
harmonic function \eqref{Eharmonic}.

\begin{proposition}\label{extdom}
	Let $(X_t)_{t\geq 0}$ be a solution to the martingale problem with respect to $\shd_{L^\delta}$. Suppose the following.
	\begin{enumerate}[label=\roman*)]
		\item For almost all $t\in \ ]0,T]$ the law of $X_t$ admits density $p_t.$
		\item $\displaystyle \lim_{|x|\to 0}\int_{0}^{T}|x|^{1-\delta}p_t(x)dt = 0.$
	\end{enumerate}
Then $(X_t)$ is also a solution to the martingale problem with respect to $\hat{\shd}_{L^{\delta}}$.
\end{proposition}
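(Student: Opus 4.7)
The goal is to show that for every $f\in \hat\shd_{L^\delta}$ the process
\[
N^f_t := f(X_t) - f(X_0) - \int_0^t L^\delta f(X_s)\,ds
\]
is a $\P$-local martingale. My approach is to approximate $f$ by a sequence $f_n \in \shd_{L^\delta}$ that coincides with $f$ outside the shrinking interval $(-1/n, 1/n)$, apply the given martingale problem to each $f_n$, and pass to the limit $n\to\infty$. Hypotheses (i) and (ii) will be used precisely to control the contribution arising from the modification of $f$ near the origin.

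Setting $g(x) := f'(x)|x|^{\delta-1}$, continuously extended to $\R$ by property (a) of Definition \ref{Domain_L_delta}, one has $L^\delta f(x) = \tfrac12 g'(x)|x|^{1-\delta}$ for $x \neq 0$. I plan to define $f_n \in \shd_{L^\delta}$ by letting $f_n = f$ on $\{|x|\ge 1/n\}$ and, on $(-1/n, 1/n)$, by a smooth (low-degree polynomial) interpolant matching $f$ and its derivatives at $\pm 1/n$ and satisfying $f_n'(0)=0$; by Proposition \ref{domain} this guarantees $f_n \in \shd_{L^\delta}$. The key observation is that every element of $\shd_{L^\delta}$ forces the corresponding $g_n$ to vanish at the origin, whereas $g_n(\pm 1/n) = g(\pm 1/n)\to g(0)$, which is in general non-zero (e.g.\ $g(0)=1$ for the harmonic function \eqref{Eharmonic}). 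A scaling argument (writing $g_n(x)=p_n(nx)$) shows that the interpolation may be chosen so that $\sup_{|x|<1/n}|g_n'(x)| = O(n)$, whence the crucial bound
\[
|L^\delta f_n(x)| = \tfrac12|g_n'(x)||x|^{1-\delta} \le C n |x|^{1-\delta}, \qquad |x| < 1/n,
\]
with $C$ independent of $n$.

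Since $f_n\in\shd_{L^\delta}$, the hypothesis gives that $M^n_t := f_n(X_t)-f_n(X_0)-\int_0^t L^\delta f_n(X_s)\,ds$ is a $\P$-local martingale. The convergence $f_n(X_t)\to f(X_t)$ follows from uniform convergence of $f_n$ to $f$ on compacts, so I focus on the drift, split as
\[
\int_0^t L^\delta f_n(X_s)\,ds = \int_0^t L^\delta f(X_s)\mathbf{1}_{\{|X_s|\ge 1/n\}}\,ds + \int_0^t L^\delta f_n(X_s)\mathbf{1}_{\{|X_s|<1/n\}}\,ds.
\]
The first summand converges a.s.\ to $\int_0^t L^\delta f(X_s)\,ds$ by dominated convergence, using that $L^\delta f$ is continuous on $\R$ (hence bounded on the random compact containing the path) and that by hypothesis (i) the set $\{s\le T : X_s = 0\}$ has Lebesgue measure zero a.s. For the second summand, the bound above together with Fubini yields
\[
\E\!\int_0^t\!|L^\delta f_n(X_s)|\mathbf{1}_{\{|X_s|<1/n\}}\,ds \le Cn\!\int_{|x|<1/n}\!\!F(x)\,dx \le 2C\sup_{|x|<1/n}F(x),
\]
where $F(x):=\int_0^T|x|^{1-\delta}p_s(x)\,ds\to 0$ as $|x|\to 0$ by hypothesis (ii), so this term vanishes in $L^1$. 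A standard localization (e.g.\ $\tau_k = \inf\{t:|X_t|\ge k\}$) then transfers the local martingale property from $M^n$ to its limit $N^f$.

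The main obstacle is the quantitative interpolation in the construction of $f_n$: one must simultaneously enforce the domain constraint $g_n(0)=0$ imposed by $\shd_{L^\delta}$ while paying only an $O(n)$ derivative for the forced gap between $g_n(0)=0$ and $g(0)$. Once this is in place, the factor $n$ in the key bound is compensated exactly by the factor $1/n$ coming from the length of the shrinking integration interval, and hypothesis (ii) supplies the vanishing small factor; hypothesis (i) handles the residual set $\{X_s=0\}$ in the first summand. Everything else is a clean stability statement for local martingales under $L^1$-convergence on a common localizing sequence.
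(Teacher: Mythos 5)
Your overall architecture is the same as the paper's: approximate $f\in\hat\shd_{L^{\delta}}$ by elements $f_n\in\shd_{L^{\delta}}$ that neutralize $f'$ near the origin, invoke the martingale property for each $f_n$, and control the extra drift term by pairing a factor $n$ (from the forced gap in $g_n$) against the length $1/n$ of the modified region, with hypothesis~(ii) supplying the vanishing factor $\sup_{|x|\le 1/n}\int_0^T|x|^{1-\delta}p_t(x)\,dt$ and closedness of local martingales under this convergence finishing the argument; your treatment of the term away from zero (via hypothesis~(i) and the null occupation of $\{X_s=0\}$) is fine. The gap is exactly at the step you yourself flag as the crux: for the construction you describe --- a polynomial interpolant in $x$ on $(-1/n,1/n)$ matching $f,f',f''$ at $\pm 1/n$ and enforcing only $f_n'(0)=0$ --- the claimed bound $\sup_{|x|<1/n}|g_n'(x)|=O(n)$ is not justified and is in general false. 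Indeed, from \eqref{g'}, $g_n'(x)=f_n''(x)|x|^{\delta-1}+(\delta-1)f_n'(x)\sgn(x)|x|^{\delta-2}$, and if $f_n$ is $C^2$ with $f_n'(0)=0$ but $f_n''(0)\neq 0$ this behaves like $\delta f_n''(0)|x|^{\delta-1}$ as $x\to 0$, which is unbounded for $0<\delta<1$; the condition $f_n\in\shd_{L^{\delta}}$ does not force $f_n''(0)=0$, and a generic interpolant will have $f_n''(0)\neq 0$ (of size roughly $n^{\delta}$, since $f''(\pm 1/n)=O(n^{\delta})$). Your ``scaling argument writing $g_n(x)=p_n(nx)$'' also does not cohere with this construction, because when $f_n$ is a polynomial the associated $g_n$ is not smooth at $0$. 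One could repair this either by enforcing additional flatness ($f_n''(0)=0$ plus quantitative control of the scaled interpolation data, which would need to be carried out), or --- and this is the paper's route --- by building the approximation at the level of the derivative: set $f_n'=f'\chi_n$, $f_n(0)=f(0)$, where $\chi_n$ is a cutoff vanishing on $\{|x|\le \tfrac{1}{2n}\}$, equal to $1$ on $\{|x|\ge\tfrac{3}{2n}\}$, with $|\chi_n'|\le Cn$. Then $f_n\in\shd_{L^{\delta}}$ automatically, and
\begin{equation*}
L^{\delta}f_n=\chi_n\,L^{\delta}f+\tfrac12\,\chi_n' f',\qquad |\chi_n'(x)f'(x)|\le Cn\,|g(x)|\,|x|^{1-\delta}\,1_{\{\frac{1}{2n}\le|x|\le\frac{3}{2n}\}}(x),
\end{equation*}
which is precisely the bound you need, supported on an annulus of length $O(1/n)$; from there your expectation estimate, the use of hypotheses (i)--(ii), and the passage to the limit go through essentially verbatim. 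So the proposal is the right strategy, but the key quantitative interpolation claim is asserted rather than proved, and the specific interpolant you propose does not deliver it without modification.
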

\begin{remark}\label{rextdom}
  An analogous statement is valid for the strong martingale problem.
\end{remark} 
\begin{prooff} (of Proposition \ref{extdom}).
\
Let $f\in \hat{D}_{L^{\delta}}$ and  consider a smooth bounded function $\chi:\R\longrightarrow\R_+$ such that
\begin{equation}\label{Echi1}
\chi(x)= \left \{
\begin{array}{cc}
	1,  & x\leq -1 \\
	0, & x \geq 0 \\
	S(x), & x \in\ [0,1],
\end{array}
\right .
\end{equation}
for some bounded function $S:\ [-1,0]  \longrightarrow \ [0,1]$
with $S(0) = 0, S(-1) = 1$.
For every $n \geq 1$ we define $\chi_n:\R\longrightarrow\R_+$ as
\begin{equation*}
	\chi_n(x):=\chi\left(\dfrac{1}{2}-n|x|\right).
\end{equation*}
Notice that
\begin{equation*}
	\chi_n(x) = \left \{
	\begin{array}{cc}
		0,  & |x|\leq \frac{1}{2n} \\
		1, & |x| \geq \frac{3}{2n} \\
		\in [0,1], & \text{otherwise.} 
	\end{array}
	\right .
\end{equation*}

We have $\chi_n'(x) = \chi'\left(\dfrac{1}{2}-n|x|\right)(-n\text{sign}(x))$, so that
\begin{equation}\label{EA1}
  |\chi'_n(x)|\leq n||\chi'||_{\infty}\mathit{I}_{\{\frac{1}{2n}
    \le |x| \le \frac{3}{2n}\}}(x), \ x \in \R.
\end{equation}
For every $n\geq 1$ we define $f_n:\R\longrightarrow\R_+$ such that 
\begin{equation} \label{EChi-n}
	\left \{
	\begin{array}{cc}
		f_n(0) = f(0)\\
		f'_n=f'\chi_n. \\
	\end{array}
	\right .
\end{equation}
Clearly $f_n \in \shd_{L^{\delta}}$, so
\begin{equation}\label{EA2}
	f_n(X_t) - f_n(X_0) - \int_{0}^{t}L^{\delta}f_n(X_s)ds
\end{equation}
is a local martingale. Obviously $f_n \rightarrow f$ and $f_n' \rightarrow f'$ uniformly on each compact. We show below that
\begin{equation}\label{EA3}
	\int_{0}^{\cdot}L^{\delta}f_n(X_s)ds\xrightarrow{u.c.p.} \int_{0}^{\cdot}L^{\delta}f(X_s)ds.
\end{equation}

By \eqref{DefLForm} and \eqref{EChi-n} we get
\begin{equation*}
	L^{\delta}f_n(x) = \dfrac{|x|^{1-\delta}}{2}(|x|^{\delta - 1}f_n')'(x) = \chi_n(x)L^{\delta}f(x) + \frac{1}{2} \chi_n'(x)f'(x).
\end{equation*}
Since $\chi_n$ converges to $1$ uniformly on each compact,
then $ \int_0^\cdot \chi_n(X_s) L^{\delta}f(X_s) ds$ converges
u.c.p. to $ \int_0^\cdot  L^{\delta}f(X_s) ds$.
To prove \eqref{EA3} it remains 
to prove that
\begin{equation}\label{chi-ucp}
  \int_{0}^{t}\chi_n'(X_s)f'(X_s)ds \xrightarrow{u.c.p.} 0.
\end{equation}
  For this, by \eqref{EA1} we have
\begin{align}\label{EA3Bis}
	&\E\left(\sup_{t\leq T}\left|\int_{0}^{t}\chi'_n(X_s)f'(X_s)ds\right|\right) \leq \E\left(\int_{0}^{T}\left|\chi'_n(X_s)f'(X_s)\right|ds\right) \leq\\\nonumber & \leq n||\chi'||_{\infty}\E\left(\int_{0}^{T}|f'(X_t)|\mathit{I}_{\{\frac{1}{2n}|X_t|<\frac{3}{2n}\}}(X_t)dt\right) = n||\chi'||_{\infty}\int_{0}^{T}\int_{\frac{1}{2n}}^{\frac{3}{2n}}|f'(x)|p_t(x)dxdt.
\end{align}
Let $g$ be the continuous functions such that for $x\neq 0$ we have $g(x) =
f'(x)|x|^{\delta-1}$.
\eqref{EA3Bis} gives
$$
 I(n):= n||\chi'||_{\infty}\int_{0}^{T}\int_{\frac{1}{2n}}^{\frac{3}{2n}}|f'(x)|p_t(x)dxdt = n||\chi'||_{\infty}\int_{0}^{T}\int_{\frac{1}{2n}}^{\frac{3}{2n}}
  |g(x)||x|^{1-\delta}p_t(x)dxdt.
  $$
  Let $\varepsilon >0$.  Taking into account
  hypothesis ii) in the statement, there exists $A>0$ such that for $|x| \le A$,
such that
$\int_{0}^{T}|x|^{1-\delta}p_t(x)dt <  \varepsilon$.
Consequently, for $ \vert x \vert \le A$ 
$$ I(n) \le   \dfrac{3}{2} ||\chi'||_{\infty} \sup_{\vert x\vert \le A}  \vert g(x) \vert \varepsilon.$$
Taking the $\limsup$ when $n$ goes to infinity and since $\varepsilon$ is arbitrary
we get $\limsup_{n \rightarrow +\infty} I(n) = 0$ and consequently \eqref{chi-ucp}.



Since the space of local martingales is closed under the u.c.p. convergence then,
taking the limit on \eqref{EA2} when $n\to \infty$, we conclude that $f(X_s) - f(X_0) - \int_{0}^{t}L^{\delta}f(X_s)ds$ is a local martingale.
\end{prooff}
\begin{proposition}\label{Besatisfy}
Let $(X_t)$ be the Bessel process of dimension $\delta \in\ [0,1]$ starting from $x_0>0$. Then the following holds.
\begin{enumerate}[label=\roman*)]
	\item For every $t>0$ the law of $X_t$ admits a density $p_t$.
	\item $\displaystyle \lim_{\vert x \vert \to 0^+}\int_{0}^{T}\vert x\vert ^{1-\delta}p_t(x)  = 0$. 
\end{enumerate}
\end{proposition}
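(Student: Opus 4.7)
The plan is to rely on the explicit transition density formula for the Bessel process of dimension $\delta > 0$ starting at $x_0 > 0$, namely
\begin{equation*}
p_t(x) = \frac{x}{t}\left(\frac{x}{x_0}\right)^{\nu} \exp\!\left(-\frac{x^2+x_0^2}{2t}\right) I_{\nu}\!\left(\frac{xx_0}{t}\right), \quad x>0,\ t>0,
\end{equation*}
with $\nu = \delta/2 - 1$ and $I_\nu$ the modified Bessel function of the first kind (see Revuz--Yor, Chapter XI, Corollary~1.4). This settles part (i) directly, since $p_t$ is then a density with respect to Lebesgue measure on $\R_+$.

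For part (ii), the strategy is a direct analysis of $|x|^{1-\delta}p_t(x)$ as $x\to 0^+$, coupled with Lebesgue's dominated convergence theorem in the $t$-variable. Two classical asymptotics of $I_\nu$ are central: the small-argument expansion $I_\nu(z) = (z/2)^\nu/\Gamma(\nu+1) + O(z^{\nu+2})$ as $z \to 0^+$, valid because $\nu>-1$ (since $\delta>0$); and the large-argument asymptotic $I_\nu(z) \sim e^z/\sqrt{2\pi z}$ as $z \to \infty$. In the regime $xx_0/t \ll 1$ the first asymptotic gives
\begin{equation*}
|x|^{1-\delta} p_t(x) \;\sim\; \frac{2^{1-\delta/2}}{t^{\delta/2}\,\Gamma(\delta/2)}\, e^{-x_0^2/(2t)} \qquad \text{as } x\to 0^+,
\end{equation*}
while in the regime $xx_0/t \gg 1$ the second asymptotic provides rapid Gaussian decay from $\exp(-(x-x_0)^2/(2t))$ after cancellation of the exponent.

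The next step is to patch these regime-wise estimates into a single $x$-independent dominating function $\Phi(t)$ on $(0,T]$, integrable on this interval. The factor $e^{-x_0^2/(2t)}$, present precisely because $x_0 > 0$, is what ensures integrability of $\Phi$ near $t=0$, taming the apparent singularity $t^{-\delta/2}$. Once such a $\Phi$ is in hand, dominated convergence permits exchanging the limit in $x$ with the integral in $t$, reducing the conclusion of (ii) to the analysis of the pointwise limit of $|x|^{1-\delta}p_t(x)$ together with the control near $t=0$ provided by the exponential factor.

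The main obstacle is the non-uniform behaviour of the integrand across the transition regime $t \sim xx_0$, where neither asymptotic of $I_\nu$ applies cleanly and the two regime-wise bounds must be joined. Resolving this will likely require either a uniform upper bound on $I_\nu$ via its Poisson-type integral representation, or a careful case-split in $t$ depending on whether $t \le xx_0$ or $t > xx_0$, with separate control of each piece. Once a uniform dominating function is built, the remainder of the argument is routine.
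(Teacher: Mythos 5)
Part (i) of your plan coincides with the paper's: both rest on the same explicit transition density with $\nu=\delta/2-1$ (with the minor caveat, which the paper also glosses over, that for $\delta=0$ the law of $X_t$ has an atom at zero). The genuine problem is in part (ii), and it sits exactly at the step you call routine. Your own small-argument expansion $I_\nu(z)\sim (z/2)^{\nu}/\Gamma(\nu+1)$ shows that for each fixed $t>0$ the powers of $x$ cancel identically, so that $x^{1-\delta}p_t(x)\to \frac{2^{1-\delta/2}}{\Gamma(\delta/2)}\,t^{-\delta/2}e^{-x_0^2/(2t)}>0$ for $0<\delta\le 1$ --- precisely the asymptotic you display. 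Consequently, if you do construct an integrable dominating function $\Phi$, dominated convergence yields the limit $\frac{2^{1-\delta/2}}{\Gamma(\delta/2)}\int_0^T t^{-\delta/2}e^{-x_0^2/(2t)}\,dt$, which is strictly positive, not $0$; and even without any domination, Fatou's lemma applied to the nonnegative integrand already forces $\liminf_{x\to 0^+}\int_0^T x^{1-\delta}p_t(x)\,dt$ to be at least that positive number. So your strategy cannot terminate in the stated conclusion for any $\delta\in(0,1]$; it closes only for $\delta=0$, where $\nu=-1$, $I_{-1}=I_1(z)\sim z/2$, and an extra factor of $x$ survives. The two regimes you identify are not merely ``hard to patch'': the small-argument regime is incompatible with the claimed limit, and a completed version of your argument would expose this rather than prove (ii).

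For comparison, the paper argues differently: it inserts the bound $I_\nu(z)\le Ce^{z}$ of Lemma \ref{Befuncbound} (established there only for $z$ large) for every $z=x_0x/t$, obtaining $x^{1-\delta}\int_0^T p_t(x)\,dt\le C\,(xx_0)^{1-\delta/2}\int_0^T e^{-(x_0-x)^2/(2t)}\,\frac{dt}{t}$, and concludes from the vanishing prefactor $x^{1-\delta/2}$. Your finer asymptotics pinpoint exactly where the two computations part ways: once $x$ is small, $z=x_0x/t$ is small for most $t\in(0,T]$, and there $I_\nu(z)\asymp z^{\nu}$ with $\nu=\delta/2-1<0$ diverges, contributing the factor $x^{\delta/2-1}$ that cancels $x^{1-\delta/2}$; the exponential bound is therefore not applicable on the whole range of $t$. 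In other words, your analysis is the sharper of the two, but as written it is an obstruction to statement (ii) for $\delta\in(0,1]$ rather than a proof of it, and the proposal should have flagged this instead of deferring the difficulty to the construction of $\Phi$.
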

Before doing the proof we recall that $I_{\nu}$ the modified Bessel function of first kind (see \cite{abra}, section 10) with $\nu = \dfrac{\delta}{2} - 1$. To prove Proposition
\ref{Besatisfy} we will make use of the estimate stated in the following lemma.
\begin{lemma}\label{Befuncbound}
  $I_{\nu}(z) \leq C\exp(z)$, for some constant $C$ and $z\in \R$
  large enough.
\end{lemma}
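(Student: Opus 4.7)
The plan is to rely on the classical asymptotic expansion of the modified Bessel function of the first kind for large argument, which is tabulated in \cite{abra}. Specifically, for every fixed order $\nu$ one has the asymptotic
\begin{equation*}
I_\nu(z) \;=\; \frac{e^{z}}{\sqrt{2\pi z}}\Bigl(1 + O\bigl(z^{-1}\bigr)\Bigr), \qquad z\to +\infty,
\end{equation*}
see e.g. formula 9.7.1 in \cite{abra}. With $\nu = \delta/2 - 1 \in [-1,-1/2]$ this asymptotic applies.

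From this expansion I would deduce that
\begin{equation*}
\lim_{z \to +\infty} e^{-z}\, I_\nu(z) \;=\; \lim_{z \to +\infty} \frac{1}{\sqrt{2\pi z}}\Bigl(1 + O(z^{-1})\Bigr) \;=\; 0,
\end{equation*}
so in particular the function $z \mapsto e^{-z} I_\nu(z)$ is bounded on any interval $[z_0,+\infty[$ for $z_0$ sufficiently large. Calling $C$ an upper bound of this quantity on such an interval, we obtain $I_\nu(z) \le C e^{z}$ for all $z$ large enough, which is precisely the claim.

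No genuine obstacle is expected: the statement is essentially a restatement of the leading-order asymptotic of $I_\nu$, and the only point to be careful about is that the expansion is valid uniformly for $z$ real and positive going to infinity (which is the regime used later, since it will be applied to $z = x_0 \sqrt{S_s}/t$ type quantities in the density formula for $BESQ^\delta$). The constant $C$ will of course depend on $\nu$, hence on $\delta$, but this dependence is harmless for the subsequent application in Proposition \ref{Besatisfy}.
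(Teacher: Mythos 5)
Your argument is correct, but it follows a different route from the paper. You invoke the large-argument asymptotic $I_\nu(z) = \frac{e^z}{\sqrt{2\pi z}}\bigl(1+O(z^{-1})\bigr)$ (formula 9.7.1 in \cite{abra}), valid for fixed order $\nu$ and real $z\to+\infty$, from which boundedness of $e^{-z}I_\nu(z)$ for large $z$ is immediate (in fact you even get decay like $z^{-1/2}$, more than the statement requires). The paper instead starts from the integral representation 9.6.20 of \cite{abra}, writing $I_\nu(z)=I_1(z)-I_2(z)$ with $I_1(z)=\frac{1}{2\pi}\int_0^\pi e^{z\cos\theta}\cos(\nu\theta z)\,d\theta$ and $I_2(z)=\frac{\sin(\nu\pi)}{\pi}\int_0^\infty e^{-z\cosh t-\nu t}\,dt$, and bounds each piece elementarily: $|I_1(z)|\le\frac12 e^z$ for $z>0$, and $|I_2(z)|\le\frac{1}{\pi(R+\nu)}$ for $z>R>-\nu$ using $-z\cosh t-\nu t\le -(z+\nu)t$. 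Your approach is shorter and rests on a standard tabulated asymptotic; the paper's is more self-contained, producing explicit constants from the integral representation without appealing to the asymptotic expansion. Both yield exactly the bound needed in Proposition \ref{Besatisfy}, and the dependence of $C$ on $\nu$ (hence $\delta$) is indeed harmless in that application.
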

\begin{proof}
  In \cite{abra} equation 9.6.20 (p.376) we have
  $$I_{\nu}(z) = \dfrac{1}{2\pi}\int_{0}^{\pi}\exp(z\cos(\theta))
  \cos(\nu \theta z) d\theta - \dfrac{\sin(\nu\pi)}{\pi}
  \int_{0}^{\infty}\exp(-z\cosh(t) - \nu t)dt=: I_1(z) - I_2(z). $$
  For $z > 0$ we get
  $$ \vert I_1(z) \vert \le \frac{1}{2} \exp(z). $$
  Concerning $I_2(z)$  we first observe that
$ - z \cosh(t) - \nu t \le (- z - \nu) t$ for $t \ge 0$.
Let $R > -\nu  $. For $z > R$ we get
$$\vert I_2(z) \vert \le \dfrac{1}{\pi} \int_0^\infty \exp(-t(R + \nu)) dt
=    \dfrac{1}{\pi (R+\nu)}.$$
Consequently the result follows.
\end{proof}
\begin{prooff} (of Proposition \ref{Besatisfy}).
\begin{enumerate}[label=\roman*)]
\item
  We recall (see \cite{Jean}, chapter 6 equation 6.2.2 and Appendix A)
  that for $X_0 = x_0$
	\begin{equation}\label{EB1}
		p_s(y) = \dfrac{y}{s}\left(\dfrac{y}{x_0}\right)^{\nu}\exp\left(-\frac{x^2_0 + y^2}{2s}\right)I_{\nu}\left(\dfrac{x_0y}{s}\right),
	\end{equation}
      \item Since $X$ is non-negative, we can remove the absolute value
        from $\vert x \vert$.
        By \eqref{EB1} and Lemma \ref{Befuncbound} we have
\begin{align*}
  &x^{1-\delta}\int_{0}^{T}p_{t}(x) dt  \le C
    \dfrac{x^{1-\delta + 1+\frac{\delta}{2}-1}}{x_0^{\frac{\delta}{2} -1}}\int_{0}^{T}\exp\left(-\dfrac{x^2_0 + x^2}{2t}+\dfrac{x_0x}{t}\right)\dfrac{1}{t}dt \\
  &\le C x^{1-\frac{\delta}{2}} x_0^{1-\frac{\delta}{2}}\int_{0}^{T}\exp\left(-\dfrac{(x_0 - x)^{2}}{2t}\right)\dfrac{1}{t}dt.
\end{align*}
For $t> 0$ and $x < x_0$ we set $\tilde{t}:=\dfrac{(x_0 - x)^2}{t}$, so $dt = - \dfrac{(x_0-x)^{2}}{\tilde{t}^2}$. That gives us
\begin{equation}\label{EIntegral}
C(x x_0)^{1- \frac{\delta}{2}}
  \int_{\frac{(x_0 -x)^2}{T}}^\infty \exp\left(\dfrac{-\tilde{t}}{2}\right)
  \dfrac{1}{\tilde{t}}d\tilde{t}.
  \end{equation}
  Since previous integral converges to
  $$ \int_{\frac{x_0^2}{T}}^\infty \exp\left(\dfrac{-\tilde{t}}{2}\right),$$
when $x \rightarrow 0$, then \eqref{EIntegral} converges to zero.
So the proof is concluded.
\end{enumerate}
\end{prooff}
\begin{remark} \label{R317} 
We remark that item (ii) of Proposition \ref{Besatisfy} is not
fulfilled for a Bessel process starting from $x_0=0$,
see Proposition \ref{P319}. In this case, if one replaces the initial domain
$\shd_{L^\delta}$ with its extended domain
the Bessel process fulfills a martingale problem where
one has to add a supplementary term in the operator $L^\delta$.
This research is developed in an ongoing draft, which
goes beyond the scope of the present paper.

\end{remark}
\begin{proposition} \label{P319}
  Let $(X_t)$ be the Bessel process with dimension $\delta \in  [0,1]$
  starting at $x_0=0$. Then, the following holds.
\begin{enumerate}[label=\roman*)]
	\item For every $t>0$ the law of $X_t$ admits a density $p_t$
	\item For every $t>0$
          $\displaystyle \lim_{x\to 0^+}\int_{0}^{t}x^{1-\delta}p_s(x)ds =
      \dfrac{2^{2-\frac{\delta}{2}}}{\Gamma(\frac{\delta}{2})}t^{1-\frac{\delta}{2}}\dfrac{1}{2-\delta}.$
\end{enumerate}
where $\Gamma$ is the Gamma function given by $\Gamma(a) = \int_{0}^{\infty}x^{a-1}\exp(-x)dx, a> 0$.
\end{proposition}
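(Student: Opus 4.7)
The plan is to compute the density $p_t$ explicitly from the known transition law of the squared Bessel process $BESQ^\delta(0)$ and then to evaluate the limit by a straightforward convergence argument.

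For part (i), I would start from the well-known fact that $S_t = X_t^2$, the $BESQ^\delta(0)$ at time $t$, has density on $\R_+$
\[
q_t^S(y) = \dfrac{1}{(2t)^{\delta/2}\Gamma(\delta/2)}\, y^{\delta/2-1}\exp\!\left(-\dfrac{y}{2t}\right),
\]
so that $S_t/(2t)$ is $\mathrm{Gamma}(\delta/2,1)$-distributed; this can either be cited (e.g.\ \cite{Jean}, ch.~6) or derived as the $x_0 \to 0$ specialization of \eqref{EB1} using $I_\nu(z)\sim (z/2)^\nu/\Gamma(\nu+1)$ as $z\to 0$. A change of variables $y = x^2$, $dy = 2x\,dx$ then gives
\[
p_t(x) = 2x\, q_t^S(x^2) = \dfrac{2}{(2t)^{\delta/2}\Gamma(\delta/2)}\, x^{\delta-1}\exp\!\left(-\dfrac{x^2}{2t}\right), \quad x>0,
\]
which establishes (i).

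For part (ii), the formula for $p_s$ yields the key simplification
\[
x^{1-\delta} p_s(x) = \dfrac{2}{(2s)^{\delta/2}\Gamma(\delta/2)}\exp\!\left(-\dfrac{x^2}{2s}\right),
\]
in which the factor $x^{1-\delta}$ has cancelled cleanly against the singular $x^{\delta-1}$ of $p_s$. The integrand is then pointwise increasing to $(2s)^{-\delta/2} \cdot 2/\Gamma(\delta/2)$ as $x\downarrow 0$, and dominated by that limit, which is integrable on $(0,t)$ precisely because $\delta/2 < 1$ (indeed $\delta \in [0,1]$). Hence monotone convergence gives
\[
\lim_{x\to 0^+}\int_0^t x^{1-\delta}p_s(x)\,ds = \dfrac{2}{\Gamma(\delta/2)}\int_0^t (2s)^{-\delta/2}\,ds = \dfrac{2^{\,2-\delta/2}}{\Gamma(\delta/2)}\cdot\dfrac{t^{\,1-\delta/2}}{2-\delta},
\]
which is exactly the stated expression.

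There is no real obstacle: the whole argument is a short computation plus one application of the monotone (or dominated) convergence theorem. The only mildly delicate point is writing down the density $p_t$ when $x_0=0$; rather than taking the limit $x_0\to 0$ inside the formula \eqref{EB1} (which would require controlling $I_\nu$ near the origin and is legitimate but slightly more involved), I would prefer to derive $p_t$ directly from the Gamma law of the $BESQ^\delta(0)$ marginals, which makes both the density formula and the cancellation in (ii) transparent. The integrability condition $\delta < 2$ needed for the $s$-integral near $0$ is automatically ensured by the hypothesis $\delta \in [0,1]$, so no extra case distinction is required.
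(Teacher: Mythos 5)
Your argument is correct and follows the same overall strategy as the paper: write down the explicit density of $BES^\delta(0)$ and compute the limit of the time integral. The only genuine difference is in how each of you evaluates the limit in (ii). You observe that $x^{1-\delta}p_s(x)=\frac{2^{1-\delta/2}}{\Gamma(\delta/2)}\,s^{-\delta/2}e^{-x^2/(2s)}$ increases pointwise to the integrable function $\frac{2^{1-\delta/2}}{\Gamma(\delta/2)}s^{-\delta/2}$ as $x\downarrow 0$ (integrability since $\delta/2<1$), and conclude by monotone convergence; the paper instead substitutes $\tilde s=x^2/s$, which turns the expression into $x^{2-\delta}$ times a divergent integral, and resolves the resulting $0\cdot\infty$ form by L'Hospital's rule. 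Your route is more elementary and avoids both the change of variables and L'Hospital, at the cost of nothing; the paper's route works directly from the cited density formula of \cite{Jean} (equation 6.2.2), whereas you rederive that density from the Gamma law of the $BESQ^\delta(0)$ marginal, which is an equally standard input. Both computations give the stated constant $\frac{2^{2-\delta/2}}{\Gamma(\delta/2)}\frac{t^{1-\delta/2}}{2-\delta}$. The only caveat, shared with the paper's own statement and proof, is the boundary case $\delta=0$, where $BES^0(0)$ is the null process, no density exists, and $\Gamma(\delta/2)$ is not defined, so the result should really be read for $\delta\in\,]0,1]$; your proof does not introduce any additional gap there.
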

\begin{proof}
	According to equation 6.2.2 in \cite{Jean} we have 
\begin{equation*}
	p_t(x)= \dfrac{2^{\nu}t^{-(\nu+1)}}{\Gamma(\nu+1)}x^{2\nu +1}\exp\left(-\dfrac{x^2}{2t}\right).
\end{equation*}
Consequently, since $\nu = \frac{\delta}{2} - 1$, we get
\begin{equation*}
	x^{\delta - 1}\int_{0}^{t}p_{s}(x)ds = \dfrac{2^{1 - \frac{\delta}{2}}}{\Gamma(\frac{\delta}{2})}\int_{0}^{t}\exp\left(-\frac{x^2}{2s}\right)s^{-\frac{\delta}{2}}ds
\end{equation*}
For $s>0$ and $x>0$ we set $\tilde{s} = \dfrac{x^2}{s}, ds =-\dfrac{x^2}{\tilde{s}^2}d\tilde{s}$. We obtain
\begin{align*}
	&\dfrac{2^{1 - \frac{\delta}{2}}}{\Gamma(\frac{\delta}{2})}\int_{\frac{x^2}{t}}^{\infty}\dfrac{x^2}{\tilde{s}^2}\left(\dfrac{\tilde{s}}{x^2}\right)^{\frac{\delta}{2}}\exp\left(-\dfrac{\tilde{s}}{2}\right)d\tilde{s} = \dfrac{x^{2-\delta}2^{1-\frac{\delta}{2}}}{\Gamma(\frac{\delta}{2})}\int_{\frac{x^2}{t}}^{\infty}\tilde{s}^{-(\frac{\delta}{2})}\exp\left(\dfrac{\delta}{2}\right)d\tilde{s} =\\ &=\dfrac{2^{1-\frac{\delta}{2}}}{\Gamma(\frac{- \tilde s}{2})}\dfrac{1}{x^{\delta-2}}\int_{\frac{x^2}{t}}^{\infty}\tilde{s}^{\frac{\delta}{2}-2}\exp\left(-\dfrac{\tilde{s}}{2}\right)d\tilde{s}
\end{align*}
Since the integral and $\dfrac{1}{x^{\delta-2}}$ go to $\infty$ when $x\to 0^+$ then, by L'Hospital rule,
\begin{align*}
	&\displaystyle\lim_{x\to 0^+}\int_{0}^{t}x^{1-\delta}p_s(x)ds = -\dfrac{2^{1-\frac{\delta}{2}}}{\Gamma(\frac{\delta}{2})}\lim_{x\to 0^+}\dfrac{\frac{2x}{t}(\frac{x^2}{t})^{\frac{\delta}{2}-2}\exp(-\frac{x^2}{t})}{(\delta-2)x^{\delta-3}} =\\ &= -\dfrac{2^{2-\frac{\delta}{2}}}{\Gamma(\frac{\delta}{2})}t^{1-\frac{\delta}{2}}\dfrac{1}{\delta -2}\lim_{x\to 0^+}x^{0} = \dfrac{2^{2-\frac{\delta}{2}}}{\Gamma(\frac{\delta}{2})}t^{1-\frac{\delta}{2}}\dfrac{1}{2-\delta}
\end{align*}
\end{proof}

\subsection{On an alternative approach to treat
the martingale problem on the full line.}

\label{R55}

$ $

A priori we could have approached the martingale problem
related to Bessel processes by the technique of \cite{frw1}.

\begin{enumerate}
  \item
  Thereby, the authors handled martingale problems related
to operators $L: \shd_L \subset C^1(\R) \rightarrow \R$
of the form $L f = \frac{\sigma^2}{2} f'' + b'f',$ where
$b$ is the derivative of a continuous function, $\sigma$
  is strictly positive continuous and
  $\Sigma$ is defined as \eqref{Sigmaintr}.
  The idea was to consider an
  $L$-harmonic function $h:\R \rightarrow \R$
  defined by
  $h(0) = 0$
and $h' = e^{-\Sigma}$.
In \cite{frw1}, $L$ was also expressed
 in the form \eqref{DefLbis}. The proof of well-posedness
of the martingale problem thereby
was based on a non-explosion condition
(3.16)   in Proposition
3.13 in \cite{frw1}
and
the fact that
$\sigma_0:= (\sigma e^{-\Sigma}) \circ h^{-1}$
is strictly positive and so the SDE (for every fixed initial condition)
\begin{equation}\label{EY}
  Y_t = y_0 + \int_0^t \sigma_0(Y_s) dW_s,
  \end{equation}
is well-posed.
\item 	Consider $\delta \in [0, 1[$.
  As far as the martingale problem (for the Bessel process)
  on the full line  is concerned, we could have tried to adapt
  similar methods.
  We observe that $L := L^\delta$ is also expressed in the form
  \eqref{DefLbis}, which in our case gives \eqref{DefLForm}.
  Taking into account \eqref{hBessel}, we have
  \begin{equation} \label{Eh}
    h(x) = \sgn(x) \frac{{\vert x \vert}^{2 - \delta}}{2-\delta},  x \in \R.
    \end{equation}
Since $h$ is bijective, one can show that
(3.16)   in Proposition
3.13
in \cite{frw1}
is automatically satisfied. Moreover
\begin{equation} \label{Esigma0}
\sigma_0(y) = \sgn(y)(2 - \delta)^{\frac{1 - \delta}{2 -
   \delta}}|y|^{\frac{1 - \delta}{2 - \delta}}.
\end{equation}
Following the same idea as in in Proposition 3.2 of \cite{frw1},
one can show that the well-posedness of the Bessel martingale problem
(with respect to $\hat \shd_{L^\delta}$)
is equivalent to the well-posedness (in law) of
\eqref{EY}.
Here $\sigma_0(0) = 0$, but
 \eqref{EY} is still well-posed even if
\begin{equation}\label{ESchmidt}
\int_0^{\varepsilon} \frac{1}{\sigma_0^2}(y) dy = +\infty, \
 \forall \varepsilon > 0.
\end{equation}
In fact in that case \eqref{ESchmidt} corresponds to the
Engelbert-Schmidt criterion (see Theorem 5.7 in
\cite[Chapter 5]{ks}.
\item The criterion \eqref{ESchmidt} can be reformulated here
  saying
that the quantity
\begin{equation} \label{EDiver}
\frac{1}{(2 - \delta)^{\frac{2 - 2\delta}{2 - \delta}}}
 \int_{0}^{\epsilon}y^{\frac{2\delta - 2}{2-\delta}}dy,  \
 \forall \varepsilon > 0,
\end{equation}
is infinite.
Now, \eqref{EDiver} is always finite for any $\delta > 0$.	
	This confirms that \eqref{EY}
        has no uniqueness in law on $\R$, with $\sigma_0$
defined in \eqref{Esigma0},
        when $\delta \in ]0,1[$.
 So, the non-uniqueness observed
in Proposition \ref{nu} is not astonishing.
\item
On the other hand, when $\delta = 0$, then \eqref{EDiver} is infinite,
which implies uniqueness in law.
\item We drive the attention on the fact
  that the considerations of this section
  concern the martingale problem with respect
  to the extended domain $\hat \shd_{L^\delta}$ and
for the case $x_0 \neq 0$. 
\end{enumerate}

\subsection{The framework for $\delta = 1$}
 $ $

\label{S55}

Let $W$ be a standard Brownian motion on some underlying
probability space.
By definition, a Bessel process of dimension $\delta = 1$ starting at
$x_0 \ge 0$ is
a non-negative process $X$ such that $S:=X^2$ is a $BESQ^1(x_0^2)$.
On the other hand, in the literature such a Bessel process
$X$ is also characterized as a non-negative strong solution of
\begin{equation} \label{LocTime}
 X_t = x_0 + W_t + L_t, t \in [0,T],
\end{equation}
where $L$ is
 a non-decreasing process only increasing when $X = 0$, i.e.
$$ \int_{[0,T]} X_s dL_s = \int_{[0,T]} X_s 1_{\{X_s = 0\}} dL_s.$$
In particular, $X$ is a semimartingale.
Indeed, let $X$ be a non-negative solution of \eqref{LocTime}, then
by an easy application of It\^o's formula for semimartingales, setting $S:= X^2$, we have
\begin{eqnarray*}
 S_t &=& x_0^2 + 2 \int_0^t X_s dW_s +  \int_0^t X_s  dL_s +
\frac{1}{2} 2t \\
&=& x_0^2 + 2 \int_0^t \sqrt{S_s} dW_s +  \int_0^t X_s1_{\{X_s = 0\}} dL_s + t\\
&=& x_0^2 + 2 \int_0^t \sqrt{S_s} dW_s  + t,
\end{eqnarray*}
which implies that $S$ is a $BESQ^1(x_0^2)$ and so $X$
is a $BES^1(x_0)$.
This shows in particular that \eqref{LocTime} admits pathwise
uniqueness.
Existence and uniqueness of \eqref{LocTime} can be seen
via the Skorohod problem, see \cite{harri}.

In this section, we represent alternatively $X$ as a non-negative solution
 of a (strong) martingale problem.
As we mentioned at the beginning of Section \ref{SBessel}, we have fixed
$$ b(x) = H(x) = \left \{
\begin{array}{ccc}
1 & : & x \ge 0\\
0  & : & x < 0.
\end{array}
\right.
$$
Formally speaking we get $$\Sigma(x) = 2\int_{0}^{x} \delta_{0}(y) dy = 2H(x),$$
where $H$ is the Heaviside function. Coming back to the expression
\eqref{DefLbis},
it is natural to set
\begin{equation} \label{EL1}
L^1f = (\exp(2H)f')'\frac{\exp(-2H)}{2}, \ f \in C^2(\R^*).
\end{equation}
This gives of course
\begin{equation} \label{EL1bis}
L^1f = \frac{f''}{2}, \ f \in C^2(\R^*).
\end{equation}
Analogously to the case $\delta \in ]0, 1[$
and applying the same principle as for the domain characterization in the
case $\delta \in [0,1[$, we naturally arrive to
$$\shd_{L^1}= \{ f \in C^2 \vert f'(0)= 0\}.$$
Since $L^1f$ has to be continuous, \eqref{EL1bis} gives
      \begin{equation}\label{EL2}
             L^{1} f =
             \frac{f''}{2}.
             \end{equation}
The PDE operator $L^1$ appearing at \eqref{EL2} coincides with
 the generator of Brownian motion. However,
the domain of that generator is larger since it is $C^2(\R)$.

\begin{remark} \label{delta= 1}
The same preliminary analysis of Section \ref{S53} about the
martingale problem related to $ 0 \le \delta < 1$
in the $\R$-case extends to the case  $\delta = 1$.
More precisely, Proposition \ref{S}, Corollary \ref{sts1}, Proposition \ref{nu}
and Remark \ref{ROtherConditions} hold. This is stated below.
\end{remark}
\begin{proposition} \label{Pdelta=1}
	$ $
	
\begin{enumerate}
\item
There is a  process $BES^{1}(x_0)$
  solving the strong martingale
 problem with respect to
 $\shd_{L^{1}}$ and $W$.
\item The martingale problem related to $L^1$ with respect
to $\shd_{L^1}$ admits (in general) no uniqueness.
\end{enumerate}
\end{proposition}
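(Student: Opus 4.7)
The plan is to establish the two items separately, exploiting the key fact that for $\delta = 1$ the Bessel process is itself a semimartingale, so that classical It\^o calculus applies directly and bypasses the approximation scheme used in Proposition \ref{S}.

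For item (1), I would take $X = BES^1(x_0)$ via the Skorokhod representation \eqref{LocTime}, namely $X_t = x_0 + W_t + L_t$ where $L$ is continuous, non-decreasing, $L_0 = 0$, and $dL$ is carried by $\{s : X_s = 0\}$. Since $[X]_t = t$, the classical It\^o formula applied to an arbitrary $f \in C^2(\R)$ yields
\begin{equation*}
f(X_t) = f(x_0) + \int_0^t f'(X_s)\,dW_s + \int_0^t f'(X_s)\,dL_s + \frac{1}{2}\int_0^t f''(X_s)\,ds.
\end{equation*}
The crucial step is that the local time integral vanishes for $f \in \shd_{L^1}$: since $dL$ is carried by $\{X_s = 0\}$ and $f'$ is continuous with $f'(0) = 0$, one has $\int_0^t f'(X_s)\,dL_s = f'(0)\, L_t = 0$. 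Combined with $L^1 f = f''/2$ from \eqref{EL2}, this is precisely identity \eqref{lmpBis} with $\sigma \equiv 1$ and $\Gamma \equiv 0$, establishing that $X$ solves the strong martingale problem with respect to $\shd_{L^1}$ and $W$.

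For item (2), I would mimic the counterexamples of Proposition \ref{nu} and Remark \ref{ROtherConditions}. The simplest choice is Brownian motion itself: set $\widetilde{X}_t := x_0 + W_t$ and observe that the standard It\^o formula gives, for every $f \in C^2(\R)$,
\begin{equation*}
f(\widetilde{X}_t) - f(x_0) - \frac{1}{2}\int_0^t f''(\widetilde{X}_s)\,ds = \int_0^t f'(\widetilde{X}_s)\,dW_s,
\end{equation*}
so $\widetilde{X}$ solves the strong martingale problem with respect to $\shd_{L^1}$ and $W$ (the boundary condition $f'(0) = 0$ is not even needed here). Since the solution $BES^1(|x_0|)$ produced in item (1) is non-negative while $\widetilde{X}$ takes negative values with positive probability, the two solutions have distinct laws and uniqueness fails. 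For completeness one can also invoke the symmetry argument from Corollary \ref{sts1}(2) to exhibit $-BES^1(|x_0|)$ as a further non-positive solution (with respect to $-W$), so non-uniqueness is quite generic.

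The only mildly delicate point is the vanishing of the local-time term in item (1); this reduces to the support property of $dL$ inherited from \eqref{LocTime} together with the continuity of $f'$, both of which are standard. Everything else is an immediate consequence of \eqref{LocTime}, the characterization of $\shd_{L^1}$ given in Section \ref{S55}, and Definition \ref{DSolution}.
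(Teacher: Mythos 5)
Your proof is correct, but it follows a genuinely different route from the paper's. The paper proves Proposition \ref{Pdelta=1} simply by observing (in the remark immediately preceding it) that the whole analysis of Section \ref{S53} extends verbatim to $\delta=1$: existence is obtained as in Proposition \ref{S}, i.e.\ by applying It\^o's formula to $f_n(S_t)$ with $f_n(y)=f\bigl(\sqrt{y+\tfrac1n}\bigr)$, $S$ a $BESQ^{1}$, and passing to the u.c.p.\ limit, while non-uniqueness is obtained as in Proposition \ref{nu} and Remark \ref{ROtherConditions}, exhibiting $\sqrt{S}$ and $-\sqrt{S}$ as two solutions with distinct laws. You instead exploit what is special to $\delta=1$, namely that $BES^{1}(x_0)$ is a semimartingale with the Skorokhod decomposition \eqref{LocTime}: one application of the classical It\^o formula plus the observation that $\int_0^t f'(X_s)\,dL_s=f'(0)L_t=0$ for $f\in\shd_{L^{1}}$ gives \eqref{lmpBis} directly, with no approximation or limit passage (note only that the reflected solution of \eqref{LocTime} is adapted to the canonical filtration of $W$, as required by Definition \ref{DSolution}, which holds by the explicit Skorokhod construction cited by the paper); and for non-uniqueness you use Brownian motion itself, which is a solution precisely because the only constraint imposed by the domain, $f'(0)=0$, plays no role in It\^o's formula for a $C^2$ function, and it clearly differs in law from the non-negative solution of item (1). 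Both arguments are valid: yours is shorter, avoids the squared-Bessel approximation, and makes transparent why the reflecting boundary is invisible to test functions with $f'(0)=0$, but it is confined to $\delta=1$; the paper's argument is uniform in $\delta\in[0,1]$ and keeps the same pair of counterexamples ($\pm$Bessel, cf.\ Corollary \ref{sts1}) throughout. Your additional remark that $-BES^{1}(-x_0)$ with respect to $-W$ is a further solution recovers exactly the paper's counterexample, and your Brownian-motion example is, in the paper's terminology, the skew Brownian motion with parameter $1/2$, which the paper only alludes to after the proposition.
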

Similarly to Corollary \ref{sts1}, the processes $BES^1(x_0)$ and
 $ - BES^1(-x_0)$ are solutions to the strong martingale problem
with respect to  $\shd_{L^1}$ and an underlying Brownian motion $W$.
Other solutions on the real line
are the so-called
 {\it skew Brownian motions}
 which will be investigated more in detail
in a  future work.
 For this last one, we can mention the works of Harrison and Shepp (\cite{harri}) and Le Gall (\cite{LeGall}).

Concerning the $\R_+$-case, let again $(\Omega, \shf, \P)$ be a probability space equipped with the canonical filtration
 ${\mathfrak F}^W$ of a Brownian motion $W$.

By using the same arguments as for Propositions \ref{P54} and \ref{su},
we get the following result.
\begin{proposition}\label{S1}
  There is a  process $BES^{1}(x_0)$
  solving the strong martingale
 problem with respect to
 $\shd_{L^{1}}(\mathbb{R_+})$ and $W$.
 Moreover, the martingale problem admits
 pathwise uniqueness with respect to $\shd_{L^{1}}(\mathbb{R_+})$.
\end{proposition}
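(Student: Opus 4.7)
The plan is to transpose the arguments of Propositions \ref{P54} and \ref{su} to the case $\delta=1$.

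For strong existence, I would take $S$ to be the (pathwise unique, non-negative) strong solution of \eqref{ESQBessel} with $\delta = 1$ and $s_0 = x_0^2$ driven by the prescribed Brownian motion $W$, and set $X := \sqrt{S}$. Fixing $f \in \shd_{L^1}(\R_+)$, extending it to $\hat f \in \shd_{L^1}$ and applying Itô's formula to $f_n(y) := \hat f(\sqrt{y+1/n})$ at $S_t$ reproduces the identity \eqref{ES2} verbatim with $\delta = 1$. The same u.c.p. limiting argument as in Proposition \ref{S} handles the first two terms and produces $\int_0^\cdot f'(X_s)\,dW_s$ and $\int_0^\cdot \tfrac{1}{2} f''(X_s)\,ds = \int_0^\cdot L^1 f(X_s)\,ds$. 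The only point specific to $\delta=1$ is that the third term, whose integrand features the bracket $1 - S_s/(S_s+1/n)$, vanishes in the limit: this bracket is bounded by $1$ and converges pointwise to $\mathbf{1}_{\{S_s=0\}}$, while the set $\{s : S_s = 0\}$ has a.s.\ zero Lebesgue measure (a fact well known for $BESQ^1$). Dominated convergence then yields \eqref{lmpBis} for $L^1$.

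For pathwise uniqueness, let $(X,\P)$ be any non-negative $\mathfrak{F}$-adapted solution to the strong martingale problem driven by $W$ with $X_0 = x_0$. Since $f_1(x) := x^2$ lies in $\shd_{L^1}(\R_+)$ and satisfies $L^1 f_1 \equiv 1$, identity \eqref{lmpBis} applied to $f_1$ gives
$$ X_t^2 = x_0^2 + 2\int_0^t X_s\,dW_s + t,$$
so $S := X^2$ is a strong solution of \eqref{ESQBessel} with $\delta = 1$ relative to the same $W$. The pathwise uniqueness of \eqref{ESQBessel} (recalled before Proposition \ref{S}) and the assumed non-negativity of $X$ force $X = \sqrt{S}$ to be pathwise unique.

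The main (and essentially only) new ingredient compared with the case $0 \le \delta < 1$ is the treatment of the third Itô term for $\delta = 1$, where the factor $\delta - S_s/(S_s+1/n)$ is no longer uniformly small but converges to $\mathbf{1}_{\{S_s=0\}}$. The fact that $BES^1$ spends zero Lebesgue time at the origin resolves this cleanly, and I do not foresee any further obstacle.
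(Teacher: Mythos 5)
Your proposal is correct and follows essentially the same route as the paper, which for $\delta=1$ simply invokes ``the same arguments as for Propositions \ref{P54} and \ref{su}'': existence via It\^o's formula applied to $f_n(y)=\hat f(\sqrt{y+1/n})$ along the $BESQ^1$ solution of \eqref{ESQBessel}, and pathwise uniqueness by testing \eqref{lmpBis} with $f_1(x)=x^2$ to recover \eqref{ESQBessel} for $S=X^2$ with the same $W$ and then using its pathwise uniqueness together with non-negativity. Your explicit handling of the third It\^o term via the a.s.\ zero Lebesgue time of $BESQ^1$ at the origin is a correct (and welcome) way of carrying out the adaptation the paper leaves implicit.
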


\section{Martingale problem related to
  the path-dependent Bessel process}

\label{SBesselPath}

\subsection{Generalities}
$ $

Now we are going to treat a non-Markovian martingale problem
which is a perturbation of the Bessel process
 $BES^{\delta}(x_0)$, $0\leq \delta \leq 1, x_0 \ge 0$. More precisely, we want to analyze existence and uniqueness of
solutions to the martingale problem related to the SDE
\begin{equation}\label{YB}
X_t = x_0 + W_t + \int_{0}^{t}b'(X_s)ds + \int_{0}^{t}\Gamma(s, X^{s})ds,
\end{equation}
where $\Gamma$ is the same path-dependent functional as in \eqref{EGamma}, and
$b$ is as in \eqref{EbBessel}.
\begin{proposition} \label{PNonMark_0}
Suppose $\delta = 0, x_0 = 0$. Let $W$ be a standard Brownian motion.
 The null process is a solution to the strong martingale problem
(in the sense of Definition \ref{DSolution})
with respect to $\shd_{L^\delta}$ and $W$.
\end{proposition}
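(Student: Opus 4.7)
The plan is to verify Definition \ref{DSolution} directly with the candidate $X \equiv 0$. Once one unpacks the two boundary conditions hidden in the domain $\shd_{L^0}$ (namely $f'(0)=0$ and $L^0 f(0)=0$), the strong martingale identity \eqref{lmpBis} reduces to the trivial equality $0=0$, independent of the functional $\Gamma$.

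First I would fix the probability space $(\Omega,\shf,\P,\mathfrak{F})$ supporting the Brownian motion $W$ and take $X_t \equiv 0$ for all $t \ge 0$. This process is continuous, constant (hence $\mathfrak{F}$-adapted), non-negative, and satisfies $X_0 = 0 = x_0$, so it is an admissible candidate.

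Next, I would invoke Proposition \ref{domain}(2): for any $f \in \shd_{L^0}$ one has $f'(0)=0$ and, by \eqref{LDbisquater}, $L^0 f(0)=0$. Substituting $X_s \equiv 0$ into the four terms of \eqref{lmpBis} (with $\sigma \equiv 1$) gives:
\begin{align*}
f(X_t) - f(x_0) &= f(0)-f(0) = 0,\\
\int_0^t L^0 f(X_s)\,ds &= \int_0^t L^0 f(0)\,ds = 0,\\
\int_0^t f'(X_s)\,\Gamma(s,X^s)\,ds &= \int_0^t f'(0)\,\Gamma(s,0^s)\,ds = 0,\\
\int_0^t f'(X_s)\,dW_s &= \int_0^t f'(0)\,dW_s = 0,
\end{align*}
where the integral involving $\Gamma$ is well defined because $\Gamma$ is locally bounded. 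Both sides of \eqref{lmpBis} therefore vanish, which is exactly the strong martingale problem for $X$.

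There is essentially no obstacle: the only conceptual step is recognizing that the domain condition $f'(0)=0$, imposed already at the level of $\shd_{L^0}$ through Definition \ref{Domain_L_delta} and Proposition \ref{domain}, simultaneously kills the $\Gamma$-drift and the diffusive martingale in \eqref{lmpBis}, while $L^0 f(0)=0$ kills the deterministic drift. The assertion is thus a direct compatibility check between the convention $L^0 f(0)=0$ adopted in Section \ref{Sdeltasmaller1} and the degenerate behavior of $BESQ^0(0)$ already observed in Remark \ref{RS}(2).
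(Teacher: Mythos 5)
Your verification is correct and coincides with the argument the paper leaves implicit: Proposition \ref{PNonMark_0} is stated without a separate proof because, exactly as you observe, Remark \ref{RS}(2) already records that $L^{0}f(0)=0$ for $f\in\shd_{L^{0}}$, and the domain condition $f'(0)=0$ kills both the $\Gamma$-drift and the stochastic integral when $X\equiv 0$, reducing \eqref{lmpBis} to $0=0$. So your proposal is essentially the paper's own route, spelled out in full.
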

 In presence of a path-dependent drift $\Gamma$, under suitable conditions,  Corollary \ref{Solution}
allows to show that the null process is still the unique solution of the
corresponding strong martingale problem.

\subsection{The martingale problem in the path-dependent case:
 existence in law.}

$ $

We recall  that a pair $(X, \P)$ is a solution for the martingale problem
related to $\shl$ in the sense of Definition \ref{D31}
with $L = L^\delta$ with respect to
 $\shd_{L^{\delta}}$
(resp. $\shd_{L^{\delta}}(\R_+)$), $0\leq \delta \leq 1$, if
for all $f \in \shd_{L^{\delta}}$
(resp. $f \in \shd_{L^{\delta}}(\R_+)$),
\begin{equation}\label{lmp4}
f(X_t) - f(X_0) - \int_{0}^{t}L^{\delta}f(X_s) ds - \int_{0}^{t}f'(X_s)
\Gamma(s, X^{s})ds,
\end{equation}
is a $\P$-local martingale.

A first criterion of  existence can be stated
if $\Gamma$ is measurable and bounded.


\begin{proposition} \label{P61}
Suppose that $\Gamma$ is bounded.
 Then the martingale problem related to $\shl$ (defined in  \eqref{DOperatorL})
admits existence
 with respect to $\shd_{L^{\delta}}$.
Moreover we have the following.
\begin{enumerate}
\item
If the initial condition is $x_0 \ge 0$, then
the solution can be constructed to be non-negative.
\item If the initial condition is $x_0 \le 0$, then
the solution can be constructed to be non-positive.
\end{enumerate}

\end{proposition}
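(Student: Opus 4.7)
The natural approach is a Girsanov transformation starting from the unperturbed Bessel process. Given $x_0 \ge 0$, I would begin by invoking Corollary \ref{sts1} (or Proposition \ref{Pdelta=1} if $\delta=1$): on a filtered probability space $(\Omega,\shf,\P,\mathfrak{F})$ carrying a Brownian motion $W$, take $X$ to be $BES^{\delta}(x_0)$, which solves the strong martingale problem with respect to $\shd_{L^\delta}$ and $W$. In particular, for every $f\in\shd_{L^\delta}$,
\begin{equation*}
 f(X_t)-f(X_0)-\int_0^t L^\delta f(X_s)\,ds=\int_0^t f'(X_s)\,dW_s.
\end{equation*}
If instead $x_0\le 0$, I would use the second item of Corollary \ref{sts1} and take $X=-BES^\delta(-x_0)$ with underlying Brownian motion $-W$; the argument below is symmetric.

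Since $\Gamma$ is bounded, say by $K$, the process $Z_t:=\exp\bigl(\int_0^t\Gamma(s,X^s)\,dW_s-\tfrac12\int_0^t\Gamma(s,X^s)^2\,ds\bigr)$ is a true martingale on $[0,T]$: Novikov's criterion holds trivially because $\mathbb{E}[\exp(\tfrac12\int_0^T\Gamma^2\,ds)]\le e^{K^2T/2}$. Define the equivalent probability $\tilde\P$ on $\shf_T$ by $d\tilde\P/d\P=Z_T$. Girsanov's theorem then asserts that $\tilde W_t:=W_t-\int_0^t\Gamma(s,X^s)\,ds$ is an $\mathfrak{F}$-Brownian motion under $\tilde\P$.

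Next I would translate the strong martingale problem identity into the perturbed form. Substituting $dW_s=d\tilde W_s+\Gamma(s,X^s)\,ds$ into the stochastic integral, for every $f\in\shd_{L^\delta}$,
\begin{equation*}
 f(X_t)-f(X_0)-\int_0^t L^\delta f(X_s)\,ds-\int_0^t f'(X_s)\Gamma(s,X^s)\,ds=\int_0^t f'(X_s)\,d\tilde W_s,
\end{equation*}
which is a $\tilde\P$-local martingale. Hence $(X,\tilde\P)$ is a solution in the sense of Definition \ref{D31} of the martingale problem associated with $\shl^\delta=L^\delta+\Gamma\partial$ on the domain $\shd_{L^\delta}$. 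Finally, since $\tilde\P$ is equivalent to $\P$, the pathwise positivity (resp.\ negativity) of the underlying Bessel construction carries over: if $x_0\ge 0$, $\tilde\P\{X_t\ge 0 \text{ for all } t\in[0,T]\}=1$, and analogously in the non-positive case.

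There is no serious obstacle: the only points requiring a brief verification are Novikov's condition (immediate from boundedness of $\Gamma$) and the fact that $X^s$ is $\mathfrak{F}_s$-measurable so that $\int_0^\cdot\Gamma(s,X^s)\,dW_s$ makes sense as an It\^o integral; the latter follows from the assumed measurability/non-anticipativity built into the definition of $\Gamma$ in Section \ref{SPathDep}. The statement is only about existence in law on a fixed finite horizon, which is exactly what Girsanov delivers; strong existence in the sense of Definition \ref{DSolution} would be a different matter and is not claimed here.
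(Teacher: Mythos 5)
Your proposal is correct and follows essentially the same route as the paper: start from the Bessel process (or its negative) solving the strong martingale problem, apply Novikov and Girsanov with the bounded $\Gamma$, rewrite the martingale identity under the new measure, and transfer the sign property via equivalence of the probabilities. The only cosmetic difference is that you invoke Corollary \ref{sts1} directly on $\shd_{L^\delta}$ where the paper passes through the $\R_+$-statement and then extends by non-negativity, which changes nothing of substance.
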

\begin{preuve} \
Let
  $x_0 \geq 0$.
  Given a Brownian motion $W$,
  by Propositions \ref{P54}
and \ref{Pdelta=1},
    there exists a solution $X$ to the (even strong) martingale problem
related to \eqref{DOperatorL} (with $\Gamma = 0$)
 with respect to $\shd_{L^{\delta}}(\R_+)$ and $W$. That solution
 is in fact a $ BES^{\delta}(x_0)$.
  In particular, for all $f \in \shd_{L^{\delta}}(\R_+)$,
\begin{equation}\label{e3bis}
f(X_t) - f(X_0) - \int_{0}^{t}L^{\delta}f(X_s)ds = \int_{0}^{t}f'(X_s)dW_s.
\end{equation}
Since the Bessel process is non-negative, \eqref{e3bis}
also holds for  $f \in \shd_{L^{\delta}}$.
As $\Gamma$ is bounded then,
 by Novikov's condition $$N_t = \exp\left(\int_{0}^{t}\Gamma(s, X^s)dW_s - \frac{1}{2}\int_{0}^{t}\Gamma^2(s, X^s)ds\right),$$
is a martingale. By Girsanov's Theorem
$$B_t := W_t - \int_{0}^{t}\Gamma(s, X^s)ds,$$
is a Brownian motion under the probability measure $\Q$ such that
$d\Q = N_T d\P$.
Then, we can rewrite \eqref{e3bis} as
$$f(X_t) - f(X_0) - \int_{0}^{t}L^{\delta}f(X_s)ds - \int_{0}^{t}f'\left(X_s\right)dB_s - \int_{0}^{t}f'\left(X_s\right)\Gamma(s, X^s)ds = 0.$$
Since
$\displaystyle\int_{0}^{t}f'\left(X_s\right)dB_s$ is a $\Q-$local martingale,
$(X, \Q)$ happens to be a solution to the
martingale problem in the sense of Definition \ref{D31} with respect
to $\shd_{L^\delta}$.

Suppose now that $x_0 \le 0$. The process   $X$
defined as  $ - BES^{\delta}(-x_0)$
 is a solution
of \eqref{e3bis}. Then the same procedure as for the case $x_0 \ge 0$
works. This shows existence for the martingale problem
on $\shd_{L^\delta}$.

Let us discuss the sign of the solution.
Suppose that $x_0  \ge 0$ (resp. $x_0  \le 0$).
Then, our construction starts with $BES^\delta(x_0)$
(resp. $- BES^\delta(- x_0)$) which is clearly non-negative
(resp. non-positive). The constructed solution
is again non-negative (resp. non-positive) since
it is supported by an equivalent probability measure.
\end{preuve}
\begin{remark} \label{RNonUniq}
As we have mentioned in Proposition \ref{nu} and its extension to  $\delta =1$,
the martingale problem in the sense of Definition \ref{D31} admits
no uniqueness in general, at least with respect to $\shd_{L^\delta}$,
i.e. on the whole line.

\end{remark}
\subsection{Some preliminary results on a path-dependent SDE}

$ $

Before studying a new class of path-dependent martingale problems
we recall some results stated in Section 4.5 of \cite{ORT1_PartI}.

Let $\sigma_0: \R \rightarrow \R$.
Let $\bar \Gamma: \Lambda \rightarrow \R$ be a generic Borel functional.
Related to it
we formulate the following, which was Assumption 4.25 in \cite{ORT1_PartI}.
\begin{assumption} \label{ALip}
	$ $
	\begin{enumerate}
		\item There exists a function $l: \R_+ \rightarrow \R_+$ such that $\int_{0}^{\epsilon}l^{-2}(u)du = \infty$ for all $\epsilon > 0$ and $$|\sigma_{0}(x) - \sigma_{0}(y)| \leq l(|x - y|).$$
		\item
		$\sigma_0$ has at most linear  growth.
		\item  There exists $K > 0$ such that
		$$ |\bar{\Gamma}(s, \eta^1) - \bar{\Gamma}(s, \eta^2) \vert
		\le K\left(\vert\eta^1(s) - \eta^2(s)  \vert + \int_0^s
		\vert \eta^1(r) - \eta^2(r)\vert dr\right), $$
		for all $s \in [0,T], \eta^1, \eta^2 \in C([0,T]).$
		\item $ \bar \Gamma_\infty:= \displaystyle\sup_{s \in [0, T]} \vert \bar \Gamma(s, 0)\vert < \infty.$
	\end{enumerate}
\end{assumption}
The proposition below was the object of
	\cite[Proposition 4.27]{ORT1_PartI}.

\begin{proposition} \label{TStrong}
	Let $y_0 \in \R$.
	Suppose the validity
	of
	Assumption \ref{ALip}.
	Then $E(\sigma_0,0,\bar \Gamma)$, i.e.
	\begin{equation}\label{Ybis1}
	Y_t = y_0 + \int_{0}^{t}\sigma_0(Y_s)dW_s + \int_{0}^{t} \bar \Gamma(s, Y^s)ds,
	\end{equation}
	admits pathwise uniqueness.
\end{proposition}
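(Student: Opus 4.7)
My plan is to adapt the classical Yamada--Watanabe pathwise uniqueness argument to the path-dependent setting. Let $Y^1, Y^2$ be two solutions of \eqref{Ybis1} on the same filtered probability space, driven by the same Brownian motion $W$, with the same initial condition $y_0$, and set $Z_t := Y^1_t - Y^2_t$. Since the paths are continuous, it suffices to prove $E|Z_t| = 0$ for every $t \in [0,T]$.

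The first step is to construct the usual Yamada--Watanabe approximants of $|\cdot|$ tailored to the modulus $l$. By Assumption \ref{ALip}(1) one can pick a strictly decreasing sequence $a_n \downarrow 0$ with $a_0 = 1$ and $\int_{a_n}^{a_{n-1}} l^{-2}(u)\,du = n$, and then a continuous $\rho_n$ supported in $(a_n, a_{n-1})$ with $\int \rho_n = 1$ and $0 \le \rho_n(u) \le 2/(n\, l^2(u))$. Setting $\psi_n(x) := \int_0^{|x|}\!\int_0^y \rho_n(u)\,du\,dy$ gives $\psi_n \in C^2(\R)$, $\psi_n(x) \uparrow |x|$, $|\psi_n'| \le 1$ and $\psi_n''(x) = \rho_n(|x|)$. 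A preliminary estimate of the form $E[\sup_{s\le T}|Y^i_s|^2]<\infty$, deduced from the linear growth of $\sigma_0$, the boundedness of $\bar\Gamma(\cdot,0)$, and Assumption \ref{ALip}(3), will allow me to treat the stochastic integral arising in It\^o's formula as a genuine martingale rather than a local one.

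Applying It\^o's formula to $\psi_n(Z_t)$, taking expectations, bounding the It\^o correction term by $\tfrac12\rho_n(|Z_s|)\,l^2(|Z_s|)\le 1/n$, and using Assumption \ref{ALip}(3) on the drift difference yield
\[
E\,\psi_n(Z_t) \;\le\; \frac{t}{n} + K \int_0^t \Bigl(E|Z_s| + \int_0^s E|Z_r|\,dr\Bigr) ds.
\]
Letting $n \to \infty$, monotone convergence turns the left-hand side into $E|Z_t|$; a Fubini rearrangement of the right-hand side then gives
\[
E|Z_t| \;\le\; K(1+T) \int_0^t E|Z_s|\,ds,
\]
and Gronwall's lemma forces $E|Z_t| = 0$ for every $t \in [0,T]$. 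By continuity of paths, $Y^1$ and $Y^2$ are indistinguishable.

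The main technical obstacle is the interaction between the Yamada--Watanabe smoothing, which only controls the instantaneous difference $|Z_s|$, and the non-anticipative Lipschitz condition on $\bar\Gamma$, which also involves the history integral $\int_0^s |Z_r|\,dr$. The Fubini step above turns this into a standard scalar Gronwall inequality, so no new machinery is needed; the only other delicate verification is the moment bound justifying that the It\^o-formula martingale term truly has zero expectation.
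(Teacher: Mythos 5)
Your proof is correct, and it is the natural argument: the Yamada--Watanabe smoothing functions $\psi_n$ handle the non-Lipschitz diffusion coefficient via Assumption \ref{ALip}(1), the bound $|\psi_n'|\le 1$ lets the non-anticipative Lipschitz condition \ref{ALip}(3) pass through It\^o's formula, and the Fubini rearrangement reduces the history term to a scalar Gronwall inequality; the moment bound you need to justify the vanishing expectation of the stochastic integral is exactly what Lemma \ref{L1} (i.e.\ Lemma 4.28 of \cite{ORT1_PartI}) provides. Note that the present paper does not prove Proposition \ref{TStrong} at all --- it is imported verbatim from Proposition 4.27 of \cite{ORT1_PartI} --- so your proposal supplies the argument that is only cited here; the one point worth making explicit is that the construction of $\rho_n$ with $\rho_n(u)\le 2/(n\,l^2(u))$ uses the usual regularity (e.g.\ continuity/monotonicity) of the modulus $l$, which Assumption \ref{ALip}(1) does not state but which holds in the application of interest, $l(u)=2\sqrt{u}$.
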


The lemma below was the object of \cite[Lemma 4.28]{ORT1_PartI}.
\begin{lemma} \label{L1}
	Suppose the validity of the assumptions of Proposition \ref{TStrong}.
	Let $Y$ be a  solution of
	\eqref{Ybis1} and $m \geq 2$ an integer.
	Then there exists a constant $C>0$, depending
	on the linear growth constant of $\sigma_0$,
	$Y_0$, $K, T, m$ and the quantity (3)-(4) in Assumption \ref{ALip}
	such that
	$$ \E\left(\sup_{t \le T} \vert Y_s\vert^m\right) \le C.$$
\end{lemma}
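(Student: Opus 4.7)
My plan is a standard moment-estimate argument: apply It\^o's formula to a smooth power of $Y$, control the martingale part via Burkholder--Davis--Gundy (BDG), use the linear-growth bound on $\sigma_0$ and the affine-in-sup bound on $\bar\Gamma$ implied by Assumption \ref{ALip}(3)--(4), and close with Gr\"onwall's lemma after a stopping-time localisation. The end-point $\mathbb{R}$-valuedness of $Y$ and the possibility that $m$ is odd are handled by working with $\varphi(y):=(1+y^2)^{m/2}\in C^2(\R)$ rather than $|y|^m$, so that $\varphi'$ and $\varphi''$ have at-most polynomial growth of orders $m-1$ and $m-2$, respectively.

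First I would set $\tau_n:=\inf\{t\ge 0: |Y_t|\ge n\}\wedge T$ and derive from Assumption \ref{ALip}(2)--(4) the two pointwise bounds
\[
 |\sigma_0(y)|^2\le C_\sigma(1+y^2), \qquad |\bar\Gamma(s,\eta^s)|\le \bar\Gamma_\infty + K(1+T)\sup_{r\le s}|\eta(r)|,
\]
for $y\in\R$, $s\in[0,T]$, $\eta\in C([0,T])$. Applying It\^o's formula to $\varphi(Y_t)$ and taking $\sup_{t\le u\wedge\tau_n}$ followed by expectations yields three controllable terms: the BDG-bound on the martingale part gives
\[
 \E\!\left[\sup_{t\le u\wedge\tau_n}\Bigl|\int_0^t \varphi'(Y_s)\sigma_0(Y_s)\,dW_s\Bigr|\right]\le C_{\mathrm{BDG}}\,\E\!\left[\Bigl(\int_0^{u\wedge\tau_n}\!\!|\varphi'(Y_s)|^2|\sigma_0(Y_s)|^2\,ds\Bigr)^{\!1/2}\right],
\]
and by Young's inequality ($ab\le\tfrac12 a^2+\tfrac12 b^2$) this is absorbed into a multiple of $\tfrac12 \E[\sup_{t\le u\wedge\tau_n}\varphi(Y_t)]$ plus a time integral of $\E[\sup_{r\le s\wedge\tau_n}\varphi(Y_r)]$; the $\tfrac12 \varphi''(Y)\sigma_0^2(Y)$ term gives a similar time integral directly; and the drift term produces $\int_0^u\!\E[\sup_{r\le s\wedge\tau_n}\varphi(Y_r)]\,ds$ plus a constant depending on $\bar\Gamma_\infty$, $K$, $T$, $m$.

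Putting these bounds together produces an inequality of the form
\[
 \E\!\left[\sup_{t\le u\wedge\tau_n}\varphi(Y_t)\right]\le A + B\int_0^u \E\!\left[\sup_{r\le s\wedge\tau_n}\varphi(Y_r)\right]ds,\qquad u\in[0,T],
\]
where $A,B$ depend only on $m$, $T$, $y_0$, $C_\sigma$, $K$, $\bar\Gamma_\infty$, but not on $n$. The finiteness of the left-hand side on $[0,T]$ is guaranteed by the localisation. Gr\"onwall's lemma then gives a bound $\E[\sup_{t\le u\wedge\tau_n}\varphi(Y_t)]\le A\,e^{BT}$ uniform in $n$. Letting $n\to\infty$ via Fatou's lemma yields $\E[\sup_{t\le T}\varphi(Y_t)]\le A\,e^{BT}$, and since $|y|^m\le \varphi(y)$, the claim follows with $C:=A\,e^{BT}$.

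The only mildly delicate step is the absorption after BDG: one must verify that $\E[\sup_{t\le u\wedge\tau_n}\varphi(Y_t)]$ is finite before the absorption, which is exactly why the stopping times $\tau_n$ are introduced; once this is in place, everything else is routine. The path-dependence of $\bar\Gamma$ causes no trouble because Assumption \ref{ALip}(3)--(4) is precisely tailored so that $|\bar\Gamma(s,Y^s)|$ is dominated by a constant plus a multiple of $\sup_{r\le s}|Y_r|$, which fits the supremum-based Gr\"onwall scheme above.
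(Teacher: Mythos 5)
Your argument is correct. Note that the present paper does not actually prove this lemma: it is quoted verbatim from Lemma 4.28 of \cite{ORT1_PartI}, so there is no in-text proof to compare against; your proposal supplies the standard argument that such a statement rests on. The key points all check out: Assumption \ref{ALip}(3)--(4) indeed gives $\vert\bar\Gamma(s,\eta)\vert\le\bar\Gamma_\infty+K(1+T)\sup_{r\le s}\vert\eta(r)\vert$, the choice $\varphi(y)=(1+y^2)^{m/2}$ correctly sidesteps the non-smoothness of $\vert y\vert^m$ for odd $m$, the stopped process is bounded by $(1+n^2)^{m/2}$ so the quantity being absorbed after the Burkholder--Davis--Gundy/Young step is finite and Gr\"onwall applies with constants independent of $n$, and $\tau_n\uparrow T$ a.s.\ by path continuity so Fatou closes the argument. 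A slightly shorter route, which is the one more commonly written out for estimates of this type, avoids It\^o's formula altogether: raise \eqref{Ybis1} directly to the power $m$ via $\vert a+b+c\vert^m\le 3^{m-1}(\vert a\vert^m+\vert b\vert^m+\vert c\vert^m)$, apply the $L^m$-version of Burkholder--Davis--Gundy to $\int_0^\cdot\sigma_0(Y_s)dW_s$ and Jensen to the time integrals, and conclude with the same localisation--Gr\"onwall--Fatou scheme; this buys you a proof without the Young-absorption step, while your version generalises more readily to situations where one wants the Lyapunov-function structure explicitly. Either way the constant depends only on $m$, $T$, $y_0$, the linear growth constant of $\sigma_0$, $K$ and $\bar\Gamma_\infty$, as required.
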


\subsection{A new class of solutions to the martingale problem}

$ $

Besides Proposition \ref{P61},
 Proposition \ref{exi} below and Proposition \ref{X-S} provide
a new class of solutions to the martingale problem related
to $\shl$ with respect to $\shd_{L^\delta}$.
We consider now a particular case of $\bar \Gamma$, which
is associated with $\Gamma$:
 \begin{equation}\label{barra}
\bar{\Gamma}(s, \eta) := 2\sqrt{\vert \eta(s) \vert}\Gamma(s,
\sqrt{\vert \eta^s \vert})
+ \delta, \ s \in [0, T] \ \eta \in C([0, T]).
 \end{equation}
Next, we introduce a growth assumption on $\Gamma$.
\begin{assumption} \label{AGrowth}
 $\Gamma$ is continuous
and there exists a constant $K$ such that, for every $(s, \eta) \in \Lambda$
we have
$$  |\Gamma(s, \eta) \vert
 		\le K \left(1 +  \sup_{r \in [0,T]}\sqrt{|\eta(r)|}\right).$$
\end{assumption}
\begin{proposition}\label{exi}.
  Let $\delta \in [0,1]$.
Suppose that $\Gamma$ fulfills Assumption \ref{AGrowth}.
 Then, we have the following.
\begin{enumerate}
\item  Let $s_0 \ge 0$. The path-dependent SDE
\begin{equation}\label{S1bis}
	S_t = s_0 + \delta t + \int_{0}^{t}2\sqrt{|S_s|}dW_s + \int_{0}^{t}2\sqrt{|S_s|}\Gamma\left(s, \sqrt{|S^s|}\right)ds, \delta \geq 0,
 \end{equation}
admits existence in law, see Definition A.4 of Appendix in \cite{ORT1_PartI}.
\item The constructed solution
of \eqref{S1bis} in item (1) is non-negative.

\item Let $x_0 \geq 0$.
 The martingale problem related to
 $\shl f = L^\delta f + \Gamma f'$
(see Definition \ref{D31}, \eqref{DOperatorL}) admits existence   with respect to $\shd_{L^\delta}(\R_+)$.
\end{enumerate}

\end{proposition}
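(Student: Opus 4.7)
The plan is to obtain items (1)--(3) jointly by an approximation argument that truncates $\Gamma$, reduces to the bounded-drift case of Proposition \ref{P61}, and then exploits the square-root transformation $X=\sqrt{S}$ as in Proposition \ref{S}.

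\textbf{Truncation and approximating solutions.} Take smooth cut-offs $\chi_n:\R_+\to[0,1]$ with $\chi_n\equiv 1$ on $[0,n]$ and $\chi_n\equiv 0$ on $[n+1,\infty)$, and set $\Gamma_n(s,\eta):=\Gamma(s,\eta)\chi_n(\sup_{r\le T}|\eta(r)|)$; by Assumption \ref{AGrowth}, $\Gamma_n$ is continuous, bounded by $K(1+\sqrt{n+1})$, and $\Gamma_n\to\Gamma$ pointwise on $\Lambda$. Proposition \ref{P61}(1), applied with initial condition $x_0=\sqrt{s_0}$, yields for each $n$ a non-negative process $X^n$ solving the martingale problem associated with $L^{\delta}+\Gamma_n\cdot$ with respect to $\shd_{L^\delta}$. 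Using $f_1(x)=x^2$ and $f_2(x)=x^4$, both of which belong to $\shd_{L^\delta}$ by Proposition \ref{domain}, the argument of Proposition \ref{PAllBessel} shows that $S^n:=(X^n)^2$ satisfies
\begin{equation*}
S^n_t = s_0 + \delta t + \int_0^t 2\sqrt{S^n_s}\,d\tilde W^n_s + \int_0^t 2\sqrt{S^n_s}\,\Gamma_n\!\left(s,\sqrt{(S^n)^s}\right)ds
\end{equation*}
for some Brownian motion $\tilde W^n$ on a (possibly enlarged) probability space.

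\textbf{Uniform estimates and weak compactness.} From Assumption \ref{AGrowth} we have $|\Gamma_n(s,\sqrt{(S^n)^s})|\le K(1+\sup_{r\le s}(S^n_r)^{1/4})$. Setting $v_n^p(T):=\E[\sup_{s\le T}(S^n_s)^p]$, Burkholder-Davis-Gundy applied to the $\tilde W^n$-martingale together with Young's inequality produces, after localization, a self-improving bound $v_n^p(T)\le C+\tfrac{1}{2}v_n^p(T)$, hence $\sup_n v_n^p(T)<\infty$ for every $p\ge 1$. Standard $L^4$ increment estimates via BDG then give tightness of $\{S^n\}$ in $C([0,T])$. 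By Prokhorov and Skorokhod representation, a subsequence $(\bar S^{n_k},\bar{\tilde W}^{n_k})$ converges a.s.~uniformly on a common space to a pair $(S,W)$. Continuity of $\Gamma$ yields pointwise convergence $\Gamma_{n_k}(s,\sqrt{(\bar S^{n_k})^s})\to\Gamma(s,\sqrt{S^s})$, and the moment bounds furnish the uniform integrability needed to pass to the limit in both the Itô and Lebesgue integrals. The limit $S$ is non-negative (inherited from $S^n\ge 0$) and weakly solves \eqref{S1bis}, proving (1) and (2).

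\textbf{From $S$ to the martingale problem.} Set $X:=\sqrt{S}$ with $X_0=\sqrt{s_0}=x_0$ and apply the Itô computation of the proof of Proposition \ref{S} (see also Remark \ref{RS}(1)) using the test functions $f_n(y):=f(\sqrt{y+1/n})$ for a generic $f\in\shd_{L^\delta}(\R_+)$. The only new term, arising from the $\Gamma$-part of the drift of $S$, is
\begin{equation*}
\int_0^t \frac{f'(\sqrt{S_s+1/n})}{\sqrt{S_s+1/n}}\sqrt{S_s}\,\Gamma(s,\sqrt{S^s})\,ds,
\end{equation*}
which converges u.c.p. to $\int_0^t f'(X_s)\,\Gamma(s,X^s)\,ds$ thanks to $f'(0)=0$ (Proposition \ref{domain}) and dominated convergence, exactly as the other terms in the proof of Proposition \ref{S}. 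This establishes (3).

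\textbf{Main obstacle.} The principal difficulty is the passage to the limit in the path-dependent drift: since $\Gamma$ is merely continuous (not Lipschitz), no pathwise fixed-point argument is available and one must rely on weak compactness. The moment estimates must therefore be sharp enough to supply uniform integrability of $\sqrt{S^n_s}\,\Gamma_n(s,\sqrt{(S^n)^s})$ on $[0,T]$; the sub-linear square-root growth imposed in Assumption \ref{AGrowth} is precisely calibrated to the square-root diffusion coefficient, so that the $\tfrac{3}{4}$-exponent appearing in the drift remains strictly below $1$ and the self-improving estimate closes.
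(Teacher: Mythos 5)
Your overall strategy coincides with the paper's: truncate $\Gamma$ to a bounded functional, invoke Proposition \ref{P61} together with the squared-process correspondence to produce non-negative approximating solutions $S^n$ of the truncated analogue of \eqref{S1bis}, obtain moment bounds uniform in $n$, deduce tightness, use a Skorokhod representation, pass to the limit, and transfer back to the martingale problem through $X=\sqrt{S}$; items (2) and (3) are treated exactly as in the paper (the paper phrases (3) as an appeal to Proposition \ref{X-S}, whose converse implication is precisely your Itô computation with $f_n(y)=f(\sqrt{y+1/n})$). Two points differ. Your uniform moment bounds are obtained directly by Burkholder--Davis--Gundy and Young's inequality, exploiting that the drift grows like the $3/4$ power of $\sup_s S_s$; the paper instead quotes Lemma \ref{L1} of \cite{ORT1_PartI}. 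Your version is more self-contained and is sound (after localization the sublinear exponents do close the estimate), so this is a legitimate alternative.

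The one step that is not justified as written is the identification of the limit: you pass to the limit \emph{in the SDE itself}, i.e.\ in the stochastic integrals $\int_0^\cdot 2\sqrt{\bar S^{n_k}_s}\,d\bar{\tilde W}^{n_k}_s$ whose Brownian integrators change with $k$, and you claim that uniform integrability suffices. Almost sure uniform convergence of integrands and integrators together with uniform integrability does not by itself give convergence of stochastic integrals with varying integrators; one needs a convergence theorem of Kurtz--Protter type (the UT/UCV condition, which does hold here since the integrators are Brownian motions and the integrands are bounded in $L^2$ uniformly in $k$), or else one must avoid the stochastic integrals altogether. The paper takes the second route: it passes to the limit in the martingale-problem formulation \eqref{EMartN}, i.e.\ in expectations of increments multiplied by bounded continuous functionals of the past, which involves only Lebesgue integrals (handled by continuity of $\Gamma$, the moment bounds and dominated convergence), and then identifies the limit as a weak solution of \eqref{S1bis} via Proposition \ref{P31}. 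So your argument is correct in outline, but this limiting step should either be repaired by invoking such a stochastic-integral convergence theorem or be replaced by the martingale-formulation passage used in the paper.
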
	
\begin{preuve} \

  We remark that the hypothesis on $\Gamma$ implies
 that $\bar \Gamma$  has linear growth, i.e.
there is a constant $K$  such that
\begin{equation} \label{ELinGrowth}
 \bar \Gamma(t,\eta^t) \le K(1 + \sup_{s\in [0,t]} \vert \eta(s) \vert),
\forall (t,\eta) \in [0,T]\times C([0,T]).
\end{equation}
For item (1), we start truncating $\Gamma$.
  Let $N > 0$. Let us define, for $s \in [0, T], \eta \in C([0, T])$,
\begin{eqnarray*}
\Gamma^N(s,\eta)&:=& (\Gamma(s,\eta^s) \vee (- N)) \wedge N, \\
\bar \Gamma^N(s,\eta) &:=& 2 \sqrt{\vert \eta(s) \vert}
 \Gamma^N(s,\sqrt{\vert \eta \vert})
                   + \delta.
                     \end{eqnarray*}

                     \noindent We consider the SDE
\begin{equation}\label{aproxS}
\left\{
\begin{array}{rl}
dS_t= & \hbox{} \ 2\sqrt{|S_t|}dW_t + \bar{\Gamma}^N\left(t, S\right)dt,\\
S_0=& \hbox{} \ s_0.\\
\end{array}
\right.
\end{equation}
We set $x_0:= \sqrt{s_0}$.
Since $\Gamma^N$ is bounded, by
 Proposition \ref{P61}, the martingale problem related to $\shl$
  with respect to
$\shd_{L^\delta}$, admits a solution $(X,\P)$
which is non-negative.
By Proposition \ref{X-S} the SDE \eqref{aproxS}
admits existence in law and in particular there exists a solution
$S^N$ (which is necessarily non-negative)
on some probability space $(\Omega, \mathcal{F}, \bar \P^N)$.
     By It\^o's formula, this implies that (on the mentioned space),
      	\begin{equation} \label{EMartN}
	M^N_t := f(S^N_t) - f(S^N_0) - \int_{0}^{t} f'(S^N_s)
\bar{\Gamma}^N \left(s, S^N\right)  ds - 2 \int_{0}^{t}f''(S^N_s) |S^N_s|ds,
	\end{equation}
	is a martingale for all $f \in C^2$ with compact support.
        This will be used later.
	
   \noindent      We want first to show that the family of laws $(\bar{\Q}^N)$
of $(S^N)$   is tight. For this we are going to use the Kolmogorov-Centsov Theorem.
   We denote by $\bar \E^N$ the expectation related to $\bar{\P}^N$.
 According to  Problem 4.11 in Section 2.4 of \cite{ks},
 it is enough to find constants $\alpha, \beta > 0$ realizing
 \begin{equation}\label{KolCen}
\sup_{N}\bar \E^N (|S^N_t - S^N_s|^{\alpha}) \leq c|t - s|^{1 + \beta}; s, t \in [0, T],
\end{equation}
for some constant $c > 0$.
Indeed, we will  show \eqref{KolCen} for $\alpha = 6$ and $ \beta = 1$.
By \eqref{aproxS} and  Burkholder-Davis-Gundy inequality there exists a
constant $c_6$ such that,
for $0 \le s \le t  \le T$,
\begin{equation} \label{EBDG}
\bar {\E}^N (|S^N_t - S^N_s|^6) \leq c_6\left(\bar {\E}^N\left(\int_{s}^{t}(|S^N_r|)dr\right)^3
 + \bar \E^N \left(\int_{s}^{t} \bar{\Gamma}^N \left(r, |S^N|\right)dr\right)^6\right).
\end{equation}
By \eqref{ELinGrowth}, there exists a constant $\shc_1$
where
\begin{equation} \label{EBDG1}
|\bar{\Gamma}^N(s, \eta)| \leq 2\sqrt{|\eta(s)|}|\Gamma(s, \sqrt{|\eta|})| + \delta = |\bar{\Gamma}(s,\eta)| \leq \shc_1\left(1 + \sup_{r \leq s}|\eta(s)|\right),
\end{equation}
for every $(s, \eta) \in \Lambda$, uniformly in $N$.
\noindent
By Jensen's inequality and \eqref{EBDG1},
 there exists a constant $\shc_2 > 0$, only depending on $T$
and on
 $\bar{\Gamma}$, but not on $N$,
such that
$$ \bar{\E}^N(|S^N_t - S^N_s|^6) \leq \shc_2 \left((t - s)^2
\bar{\E}^N \left(\sup_{s \leq t}|S^N_s|^3\right) +
(t - s)^5 \bar{\E}^N \left(\sup_{s \leq t}|S^N_s|^6   \right)\right).$$

\noindent
By Lemma \ref{L1}, the quantity
$$  \bar{\E}^N \left(\sup_{s \leq T}|S^N_s|^3 + \sup_{s \leq T}|S^N_s|^6\right), $$
is bounded uniformly in $N$
and therefore \eqref{KolCen} holds.
Consequently, the family of laws $(\bar{\Q}^N)$
of $(S^N)$ under $(\bar{\P}^N)$ is tight.
We can therefore extract a subsequence which, for simplicity, we will still
call $\bar{\Q}^N$ that converges weakly to a probability measure $\bar{\Q}$
on $(C[0, T], \mathfrak{B}(C[0, T]))$.

\noindent
We denote by $ \E^N$ the expectation with respect to $\bar \Q^N$ .
Let $0 \le s \le t \le T$ and let  $F:C([0, s]) \rightarrow \R$ be a bounded and continuous function.
By \eqref{EMartN}, if $S$ is the canonical process we have
\begin{equation}\label{marcharBis}
\E^N((\tilde M^N_t - \tilde M^{N}_s)F(S_r , 0 \leq r \leq s)) = 0,
\end{equation}
where
\begin{equation} \label{EMartNBis}
	\tilde M^N_t := f(S_t) - f(S_0) - \int_{0}^{t} f'(S_s)
\bar{\Gamma}^N \left(s, S\right)  ds - 2 \int_{0}^{t}f''(S_s) |S_s|ds.
	\end{equation}
\noindent
By  Skorokhod's convergence theorem, there exists a sequence of
processes $(Y^N)$ and a process $Y$
both on a probability space $(\Omega, \mathcal{F}, \Q),$
converging u.c.p. to $Y$ as $N \rightarrow +\infty$.
Indeed $(Y^N)$ and $Y$ can be seen as
random elements taking values in the
state space $(C[0, T], \mathfrak{B}(C[0, T]))$.

Moreover, the law of $Y^N$ is $\bar{\Q}^N$,
 so that
\begin{equation}\label{marcharTer}
\E^\Q((\overline M^N_t - \overline M^{N}_s)F(Y^N_r,  0 \leq r \leq s)) = 0,
\end{equation}
where
\begin{equation} \label{EMartNTer}
	\overline M^N_t := f(Y^N_t) - f(S_0) - \int_{0}^{t} f'(Y^N_r)
\bar{\Gamma}^N \left(s, Y^N\right)  ds - 2 \int_{0}^{t}f''(Y^N_s) |Y^N_r|dr.
	\end{equation}
           \noindent    We wish to pass to the limit when $N \rightarrow \infty$
using Lebesgue dominated convergence theorem
        and obtain
\begin{equation}\label{marcharQuater}
\E^\Q((\overline M_t - \overline M_s)F(Y_r,  0 \leq r \leq s)) = 0,
\end{equation}
with
\begin{equation} \label{EMartNQuater}
	\overline M_t := f(Y_t) - f(S_0) - \int_{0}^{t} f'(Y_s)
\bar{\Gamma} \left(s, Y\right)  ds - 2 \int_{0}^{t}f''(Y_r) |Y_r|dr.
	\end{equation}
For this        it remains to prove that, when $N \rightarrow \infty$
\begin{equation} \label{EELimit}
        \E^\Q \left(\int_s^t f'(Y^N_r) \bar \Gamma^N(r, Y^N) dr \right)
\rightarrow \E^\Q \left(\int_s^t f'(Y) \bar \Gamma(r, Y) dr \right)
\end{equation}
and
\begin{equation} \label{EELimitY}
        \E^\Q \left(\int_s^t f''(Y^N_r) \vert Y^N_r \vert dr \right)
\rightarrow \E^\Q \left(\int_s^t f''(Y_r) \vert Y_r \vert dr \right),
\end{equation}
\noindent
as $N \rightarrow \infty$.
Below, we only prove
\eqref{EELimit} since \eqref{EELimitY} follows similarly.


Note that \eqref{EELimit} is true, if and only if,
 $$\displaystyle\lim_{N\to \infty} I_1(N)= 0, \quad  \displaystyle\lim_{N\to \infty} I_2(N)  = 0,$$
where \begin{align*}
I_1(N) := \E^{\Q}\left[\int_{s}^{t}f'(Y^N_r)(\bar{\Gamma}^N(r, Y^N) - \bar{\Gamma}(r, Y^N))dr\right], \\
I_2(N) := \E^{\Q}\left[\int_{s}^{t}f'(Y^N_r)\bar{\Gamma}(r, Y^N) - f'(Y_r)\bar{\Gamma}(r, Y) dr\right].
\end{align*}
\noindent
By \eqref{ELinGrowth}
and \eqref{EBDG1}, we have \begin{align*}
I_1(N) \leq ||f'||_{\infty}\E^{\Q}\left[1_{\{\sup_{r \in [0, T]}|\Gamma(r, Y^{N, r})| > N\}}\int_{s}^{t}|\bar{\Gamma}^N(r, Y^N) - \bar{\Gamma}(r, Y^N)|dr\right] \leq \\
 \leq 2 K T||f'||_{\infty}\E^{\Q}\left[1_{\{\sup_{r \in [0, T]}|\Gamma(r, Y^{N})| > N\}}
(1 + \sup_{r \in [0,T]}|Y^N_r|)\right].
\end{align*}
By Cauchy-Schwarz's inequality, there exists a non-negative constant $C(f, T, K)$ such that
\begin{equation} \label{E112N}
I_1(N)^2 \leq C(f, T, K)I_{11}(N)I_{12}(N),
\end{equation}
where \begin{align*}
I_{11}(N) := \Q\left(\sup_{r \in [0, T]}|\Gamma(r, Y^{N})| > N\right),\\
I_{12}(N) := \E^{\Q}\left[1 + \sup_{r \in [0,T]}|Y^N_r|^2\right].
\end{align*}
\noindent
By Chebyshev's inequality we have \begin{align*}
I_{11}(N) \leq \dfrac{1}{N^2} \E^{\Q}\left[\sup_{r \in [0,T]} |\Gamma(r, Y^{N})|^2 \right] \leq \dfrac{2 K}{N^2}\E^{\Q}\left[1 + \sup_{r \in [0,T]} |Y^N_r|^2\right].
\end{align*}
Consequently, $\displaystyle\lim_{N\to \infty}I_{11}(N) = 0$ because of Lemma \ref{L1}. On the other hand, again by Lemma \ref{L1},
$I_{12}(N)$ is bounded in $N$ and so by \eqref{E112N}, we get
 $\displaystyle\lim_{N\to \infty}I_{1}(N) = 0$.

Concerning $I_2(N),$ we have
\begin{equation} \label{E2N}
I_2(N)^2 \leq T \int_{s}^{t}\E^{\Q}\left[|f'(Y^N_r)\bar{\Gamma}(r, Y^N) - f'(Y_r)\bar{\Gamma}(r, Y)|^2\right]dr.
\end{equation}
 By Lemma \ref{L1}, there exists a constant $C$ not depending on
 $N$ such that
$$\E^{\Q}\left[\sup_{r \in [0,T]}|Y^N_r|^4\right] \leq C,$$
and, consequently, by Fatou's Lemma
$$\E^{\Q}\left[\sup_{r \in [0,T]}|Y_r|^4\right] \leq C.$$
Let $r \in [0,T]$. We have
\begin{eqnarray}\label{E4b}
 \E^{\Q}[|f'(Y^N_r)\bar{\Gamma}(r, Y^N) &-& f'(Y_r)\bar{\Gamma}(r, Y)|^4]
    \\
&\le& 8 ||f'||^4_{\infty} K^4\left(2 + \E^{\Q}\left[\sup_{r \in [0,T]}|Y^N_r|^4 +
 \sup_{r \in [0,T]}|Y_r|^4\right]\right) \nonumber \\
&\leq& 16 ||f'||^4_{\infty}K^4 (1 + C). \nonumber
\end{eqnarray}
So the sequence
$$|f'(Y^N_r)\bar{\Gamma}(r, Y^N) - f'(Y_r)\bar{\Gamma}(r, Y)|^2$$
is uniformly integrable.
We fix again $r \in [0,T].$ Since $f'$ and $\bar \Gamma$ are continuous
it follows that
\begin{equation} \label{Eentire}
\E^{\Q}\left[|f'(Y^N_r)\bar{\Gamma}(r, Y^N) - f'(Y_r)\bar{\Gamma}(r, Y)|^2\right]\longrightarrow 0,
\end{equation}
as $N \rightarrow \infty$.
Now \eqref{E4b} and Cauchy-Schwarz implies that
\begin{equation}\label{E4c}
\E^{\Q}[|f'(Y^N_r)\bar{\Gamma}(r, Y^N) - f'(Y_r)\bar{\Gamma}(r, Y)|^2]
\leq 4 ||f'||^2_{\infty} K^2 \sqrt{1 + C}.
\end{equation}
This time \eqref{Eentire}, \eqref{E4c} and
Lebesgue's dominated theorem show
that the entire Lebesgue integral of \eqref{Eentire} on $[s, t]$
converges to $0$.
Finally, $\displaystyle\lim_{N\to \infty}I_2(N) = 0$
so that  we conclude to \eqref{EELimit}
 and, consequently, \eqref{marcharQuater}.
Therefore, $(Y,\Q)$ solve the martingale problem
of the type \eqref{Ef32} as in Proposition \ref{P31}  with
$$ Lf(x) = 2 \vert x \vert f''(x) + \delta f'(x)$$
and $\bar \Gamma$ replacing $\Gamma$.
By Proposition \ref{P31}, this concludes the proof of item (1).

\medskip
Concerning item (2), the previously constructed $Y$
 is a (weak) solution to \eqref{S1bis}
under the probability $\Q$. Since it is a limit
of non-negative solutions, it will also be non-negative.

\medskip

Item (3) follows from Proposition \ref{X-S} below.

\end{preuve}

\subsection{Equivalence between martingale problem and SDE in the
  path-dependent case}

\label{SEquivalence}

$ $

We state here an important result establishing the equivalence
between the martingale problem and a path-dependent SDE of
squared Bessel type. Let $0 \le \delta \le 1$.

\begin{proposition}\label{X-S} Let $(\Omega, \shf, \P)$
be a probability space.
 Let $X$ be a
 stochastic process and we denote $S = X^2$.
\begin{enumerate}
\item
 $(|X|, \P)$ is a solution to the
 martingale problem related to \eqref{DOperatorL}
 with respect to
$\shd_{L^{\delta}}$,
 if and only if, the process $S$ is a solution of \eqref{S1bis}
for some $\mathfrak{F}^X$-Brownian motion $W$.
\item Let $W$ be a standard Brownian motion (with respect to $\P$). Then
  $|X|$ is a solution
to the strong
 martingale problem with respect
to  $\shd_{L^{\delta}}$ and
$W$, if and only if, $S$ is a solution of \eqref{S1bis}.
\end{enumerate}
\end{proposition}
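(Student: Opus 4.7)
The plan is to exploit the Itô change of variables $y \leftrightarrow \sqrt{y}$, which bridges the martingale problem for $|X|$ with domain $\shd_{L^\delta}$ and the SDE \eqref{S1bis} for $S = X^2$. The argument extends, on one hand, the regularization scheme from Proposition \ref{S} (case $\Gamma = 0$) and, on the other, the polynomial-testing argument of Proposition \ref{PAllBessel}, adapted to the path-dependent drift.

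For the implication ``$S$ solves \eqref{S1bis} $\Rightarrow$ $|X|$ solves the martingale problem'' in both items, I would fix $f \in \shd_{L^\delta}$ and regularize by $f_n(y) := f(\sqrt{y + 1/n}) \in C^2(\R_+)$. Applying Itô's formula to $f_n(S_t)$ with $dS_s = 2\sqrt{S_s}\,dW_s + (\delta + 2\sqrt{S_s}\,\Gamma(s,\sqrt{S^s}))\,ds$ produces, compared with the $\Gamma = 0$ calculation of Proposition \ref{S}, a single new term
\begin{equation*}
\int_0^t \frac{f'(\sqrt{S_s + 1/n})}{\sqrt{S_s + 1/n}}\,\sqrt{S_s}\,\Gamma(s,\sqrt{S^s})\,ds.
\end{equation*}
Since $f'(0) = 0$ and $x \mapsto f'(x)/x$ extends continuously at zero by $f''(0+)$, this integrand converges uniformly on compact sets, so the term converges u.c.p.\ to $\int_0^t f'(\sqrt{S_s})\,\Gamma(s,\sqrt{S^s})\,ds$ by Remark \ref{r1} and dominated convergence; the remaining three terms are handled verbatim as in Proposition \ref{S}. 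For item (2) the limiting stochastic integral is $\int_0^t f'(\sqrt{S_s})\,dW_s$ with the prescribed $W$, yielding the strong formulation \eqref{lmpBis}.

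For the reverse implication in item (1), I would test the martingale problem against $f_1(x) = x^2$ and $f_2(x) = x^4$, both in $\shd_{L^\delta}$ since $f_i'(0) = 0$. Proposition \ref{domain} gives $L^\delta f_1 \equiv \delta$ and $L^\delta f_2(x) = 2(2+\delta)x^2$, so that
\begin{align*}
M_t &:= S_t - s_0 - \delta t - \int_0^t 2\sqrt{S_s}\,\Gamma(s,\sqrt{S^s})\,ds,\\
N_t &:= S_t^2 - s_0^2 - 2(2+\delta)\int_0^t S_s\,ds - 4\int_0^t S_s^{3/2}\,\Gamma(s,\sqrt{S^s})\,ds,
\end{align*}
are local martingales. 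Integration by parts for $S_t^2$ using the expression of $M$, together with uniqueness of the semimartingale decomposition as in Proposition \ref{PAllBessel}, identifies the absolutely continuous drift and gives $[M]_t = 4\int_0^t S_s\,ds$. A standard construction then produces a Brownian motion $W$ with $M_t = \int_0^t 2\sqrt{S_s}\,dW_s$, which is exactly \eqref{S1bis}. For the reverse implication of item (2), the strong formulation \eqref{lmpBis} applied to $f_1$ gives directly $S_t = s_0 + \delta t + \int_0^t 2\sqrt{S_s}\,dW_s + \int_0^t 2\sqrt{S_s}\,\Gamma(s,\sqrt{S^s})\,ds$ with the prescribed $W$.

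The main obstacle is the construction of the Brownian motion $W$ in item (1), reverse direction, on the random set $\{s : S_s = 0\}$ where the natural integrand $1/(2\sqrt{S_s})$ is undefined. For $\delta > 0$ this set has Lebesgue measure zero almost surely, so $W_t = \int_0^t dM_s/(2\sqrt{S_s})$ is well defined, $\mathfrak{F}^X$-adapted, and is a Brownian motion by L\'evy's characterization using $[M]_t = 4\int_0^t S_s\,ds$. In the degenerate case $\delta = 0$, where $S$ may be frozen at $0$ after its first hitting time, both $dM_s$ and the drift contribution of \eqref{S1bis} vanish on $\{S_s = 0\}$, so the definition of $W$ on this set is irrelevant for the SDE and may be completed to preserve the $\mathfrak{F}^X$-Brownian motion property (consistent with Remark \ref{RS}(2) and Remark \ref{delta=0}).
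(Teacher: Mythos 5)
Your proposal is correct and follows essentially the same route as the paper: the forward direction tests $f_1(x)=x^2$, $f_2(x)=x^4$, identifies $[M]_t = 4\int_0^t S_s\,ds$ by uniqueness of the semimartingale decomposition and applies L\'evy's characterization, while the reverse direction is the regularization $f_n(y)=f\left(\sqrt{y+1/n}\right)$ --- the paper merely packages the latter by absorbing the drift into $\widetilde W_t = W_t + \int_0^t \Gamma(s,\sqrt{|S^s|})\,ds$ and invoking Proposition \ref{S} and Remark \ref{RS}, which amounts to the same computation you carry out explicitly. Your additional discussion of the zero set of $S$ in the construction of $W$ addresses a point the paper passes over silently (it simply asserts $[W]_t \equiv t$), so it is a harmless refinement rather than a different approach.
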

\begin{remark} \label{RX-S}
In the  statement of Proposition \ref{X-S},
$\shd_{L^{\delta}}$ can be replaced
with $\shd_{L^{\delta}}(\R_+)$, provided that $\vert X \vert$ is replaced
by $X$.

\end{remark}

\begin{prooff} (of Proposition \ref{X-S}).
	$ $
We discuss item (1).

Concerning the direct implication, by choosing $f_1(x) = x^2, f_2(x) = x^4$ we have $L^{\delta}f_1(x) = \delta, L^{\delta}f_2(x) = 2(2 + \delta)x^2$. By definition of the martingale problem,
the two processes ($ t \in [0,T]$)
\begin{equation}\label{e1a} M_t := X^2_t - X^2_0 - \delta t - \int_{0}^{t}2|X_s|\Gamma(s,|X^s|)ds\end{equation}
and  \begin{equation}\label{e2a} N_t := X^4_t - X^4_0 - 2(2 + \delta)\int_{0}^{t}X^2_s ds - 4\int_{0}^{t}|X_s|^3\Gamma(s, |X^s|)ds,\end{equation}
are $\mathfrak{F}^X$-local martingales.

Since $S = X^2$, by \eqref{e1a} we have $[S] = [M]$.
By integration by parts and \eqref{e1a}, we have
$$
[M]_t = [X^2]_t = X^4_t - X^4_0 - 2 \int_{0}^{t}X^2_s dX^2_s = X^4_t - X^4_0 - 2\delta\int_{0}^{t}X^2_s ds - 4\int_{0}^{t}|X_s|^3\Gamma(s, |X^s|)ds + M^1,
$$
where $M^1$ is a local martingale.  Therefore
\begin{equation}\label{e1tera}
 X^4_t - X^4_0 = M^1 +  2\delta\int_{0}^{t}X^2_s ds + 4\int_{0}^{t}|X_s|^3\Gamma(s, |X^s|)ds + [M]_t, \quad t \in [0,T].
\end{equation}
\eqref{e1tera} and \eqref{e2a}
 give us two decompositions of the semimartingale $X^4$;
 by the uniqueness of the semimartingale decomposition, $[M]_t = 4\int_{0}^{t}X^2_s ds$.
We set
  \begin{equation}\label{e3a}
 W_t = \int_{0}^{t} \frac{dM_s}{2|X_s|}, t \in [0,T].
 \end{equation}
 By L\'evy's characterization theorem,
 $W$ is an $\mathfrak{F}^X$-Brownian motion and by \eqref{e1a},
 we conclude that $S_t = s_0 + \delta t + \int_{0}^{t}2\sqrt{S_s}dW_s + \int_{0}^{t}2\sqrt{S_s}\Gamma(s, \sqrt{S^s})ds, t \in [0,T].$

Concerning the converse implication, suppose that $S = X^2$ solves \eqref{S1bis}
for some Brownian motion $W$. Then $S$ solves
\begin{equation}\label{S1ter}
	S_t = s_0 + \delta t + \int_{0}^{t}2\sqrt{|S_s|}d\widetilde W_s, t \in [0,T],
 \end{equation}
where
$$ \widetilde W_t := W_t + \int_{0}^{t} \Gamma(s, \sqrt{|S^s|})ds, t \in [0,T].$$
Let $f \in \shd_{L^\delta}$; by Proposition \ref{S} and Remark \ref{RS}
we have
\begin{equation}\label{lmp2quater}
 f(|X_t|) - f(|X_0|) - \int_{0}^{t}L^{\delta}f(|X_s|)ds =
\int_{0}^{t}f'(|X_s|) d\widetilde W_s.
 \end{equation}
Consequently
\begin{eqnarray*}
M^f_t &:=& f(|X_t|) - f(|x_0|) - \int_{0}^{t}\mathit{L}^\delta f(|X_s|)ds - \int_{0}^{t}f'(|X_s|)\Gamma(s, |X^s|)ds \\
 &=& \int_0^t f'(|X_s|)  dW_s,
\end{eqnarray*}
is an $\mathfrak{F}^X$-local martingale.
Then, $(\vert X \vert, \P)$ solve the martingale problem
related to \eqref{DOperatorL}
  with respect to
 $\shd_{L^\delta}$ in the sense of Definition \ref{D31}.
On the other hand, $\vert X\vert$ also solves
the strong martingale problem with respect to $\shd_{L^\delta}$ and $W$.
This concludes the proof of item (1).

As far as item (2) is concerned, the converse implication
argument can be easily adapted to the argument for the proof of the converse implication
in (1).
Concerning the direct implication, we define $f_1$
as in the proof of  item (1). By \eqref{lmpBis}, \eqref{e1a}
 and the fact that
$$ M = 2 \int_0^\cdot f_1'(\vert X_s\vert) dW_s =
 2 \int_0^\cdot \sqrt{S_s} dW_s,$$
we  obtain \eqref{S1bis}. This concludes the proof.

\end{prooff}

\subsection{The martingale problem in the path-dependent
case: uniqueness in law.}

$ $

A consequence of Girsanov's theorem gives us the following.

\begin{proposition} \label{P64} Let $0 \leq \delta \leq 1$.
	Suppose that $\Gamma$ is bounded. The martingale problem
related to \eqref{DOperatorL}
        with respect to $\shd_{L^{\delta}}(\mathbb{R_+})$
admits uniqueness.
\end{proposition}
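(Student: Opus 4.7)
The plan is to reduce the uniqueness claim to the well-posedness of the classical squared Bessel SDE via Girsanov's theorem. Let $(X^i,\P^i)$, $i=1,2$, be two solutions of the martingale problem with respect to $\shd_{L^{\delta}}(\mathbb{R_+})$, both starting at some $x_0 \ge 0$. By Proposition \ref{X-S} and Remark \ref{RX-S}, the processes $S^i := (X^i)^2$ satisfy the path-dependent squared Bessel SDE \eqref{S1bis} driven by $\mathfrak{F}^{X^i}$-Brownian motions $W^i$ under $\P^i$. Since $X^i = \sqrt{S^i}\ge 0$, the desired uniqueness in law follows from uniqueness of the law of $S^i$.

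Because $\Gamma$ is bounded, the stochastic exponential associated to $\int_0^{\cdot}\Gamma(s,\sqrt{S^{i,s}})\,dW^i_s$ satisfies Novikov's condition, so I would define $\Q^i \sim \P^i$ by
\begin{equation*}
\frac{d\Q^i}{d\P^i} = \exp\!\left(-\int_0^T \Gamma(s,\sqrt{S^{i,s}})\,dW^i_s - \tfrac{1}{2}\int_0^T \Gamma(s,\sqrt{S^{i,s}})^2\, ds\right).
\end{equation*}
By Girsanov's theorem, $\tilde W^i_t := W^i_t + \int_0^t \Gamma(s,\sqrt{S^{i,s}})\,ds$ is a $\Q^i$-Brownian motion, and substitution into \eqref{S1bis} cancels the $\Gamma$-drift to give $dS^i_t = \delta\, dt + 2\sqrt{S^i_t}\, d\tilde W^i_t$. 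Under $\Q^i$, the process $S^i$ is therefore a $BESQ^{\delta}(x_0^2)$, and the pathwise uniqueness of this classical SDE recalled before Proposition \ref{S} implies that the laws of $S^1$ and $S^2$ under $\Q^1$ and $\Q^2$ coincide with a common probability measure $\Phi$ on $C([0,T];\mathbb{R}_+)$.

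To transfer the conclusion back to $\P^i$, for any bounded Borel $F$ I would write
\begin{equation*}
\E^{\P^i}[F(S^i)] = \E^{\Q^i}\!\left[\exp\!\left(\int_0^T \Gamma(s,\sqrt{S^{i,s}})\,d\tilde W^i_s - \tfrac{1}{2}\int_0^T \Gamma(s,\sqrt{S^{i,s}})^2\, ds\right) F(S^i)\right],
\end{equation*}
and use $2\sqrt{S^i_s}\,d\tilde W^i_s = dS^i_s - \delta\,ds$ to re-express the stochastic integral as $\int_0^T \tfrac{\Gamma(s,\sqrt{S^{i,s}})}{2\sqrt{S^i_s}}\mathbf{1}_{\{S^i_s>0\}}(dS^i_s-\delta\,ds)$, which is well-defined as its integrand has quadratic variation bounded by $T\|\Gamma\|_\infty^2$. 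For $\delta\in(0,1]$, the set $\{s:S^i_s=0\}$ has zero Lebesgue measure $\Q^i$-a.s., so $d\P^i/d\Q^i$ equals an intrinsic measurable functional $\Psi(S^i)$ independent of $i$, giving $\E^{\P^i}[F(S^i)]=\int\Psi\,F\,d\Phi$, which does not depend on the chosen solution. Hence $S^1$ and $S^2$ (and consequently $X^1,X^2$) share the same law. The main technical subtlety lies in the degenerate case $\delta=0$, for which $\{S^i_s=0\}$ can carry positive measure: when $x_0=0$ the uniqueness is trivial since $BESQ^0(0)\equiv 0$, while for $x_0>0$ one splits the exponent at the absorption time $\tau_i=\inf\{t:S^i_t=0\}\wedge T$ and integrates out the post-$\tau_i$ contribution, in which $\Gamma(s,\sqrt{S^{i,s}})$ is $\shf^{S^i}_{\tau_i}$-measurable and the $\tilde W^i$-increments on $(\tau_i,T]$ are Gaussian and independent of $\shf^{S^i}_{\tau_i}$, yielding once again an $S^i$-functional after conditioning.
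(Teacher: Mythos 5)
Your argument is correct and follows essentially the same route as the paper: reduce via Proposition \ref{X-S} to the squared Bessel SDE \eqref{S1bis}, remove the path-dependent drift by Girsanov (Novikov applies since $\Gamma$ is bounded), invoke uniqueness in law of $BESQ^{\delta}$ coming from pathwise uniqueness of \eqref{ESQBessel} and Yamada--Watanabe, and transfer the conclusion back to $\P^i$ through the Radon--Nikodym density. The only difference is that where the paper simply declares the density $V^i_T$ to be a Borel functional of the solution path, you justify this by rewriting the stochastic integral intrinsically against $dS^i$ and handling the $\delta=0$ absorption case separately -- a more careful rendering of the same step.
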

\begin{remark} \label{R64} Let $x_0 \ge 0$ (resp. $x_0 \le 0$).
  By Proposition \ref{P61}, every solution
of the aforementioned martingale problem
 is non-negative (resp. non-positive).
\end{remark}

\begin{prooff} (of Proposition \ref{P64}).

Let $(X^i, \P^i), i =1,2$ be two solutions to the martingale problem related to $\shl f = Lf + \Gamma f'$
with respect  to $\shd_{L^{\delta}}(\R_+)$. By Proposition \ref{X-S}, $S^i = (X^i)^2$ is a solution of \eqref{S1bis}, for some Brownian motion $W^i$ and $\P^i$.
We define the random variable (which is also a Borel functional
of $X^i$)
\begin{equation*}
V^{i}_t
 = \exp\left(-\int_{0}^{t} \Gamma(s, X^{i}) dW^{i}_s - \frac{1}{2}\int_{0}^{t}\left(\Gamma(s, X^{i})\right)^{2}d s\right).
\end{equation*}
By the Novikov condition, it is a $\P^i$-martingale. This allows us to
define the probability $d\Q^i = V_T^i d\P^i$.
By Girsanov's theorem, for $i = 1, 2$,
 under
$\Q^{i},$  $B^i_t := W^{i}_t + \int_{0}^{t} \Gamma(s, X^{i,s})ds$
is a Brownian motion.
Therefore,
$S^i$
 is a
solution of \eqref{S1bis}  with $\Gamma = 0$, under $\Q^i.$
Now \eqref{S1bis} (with $\Gamma = 0$) admits pathwise uniqueness
and therefore uniqueness in law, by Yamada-Watanabe theorem.
Consequently $S^{i}$ (under $\Q^{i}$), $i=1,2$ have the same law
and the same holds of course for
$X^{i}, i=1,2$.
Hence, for every Borel set $B \in \mathfrak{B}(C[0, T])$ we have
\begin{equation*}
\P^{1}\{X^{1} \in B\} \\=\\ \int_{\Omega}\dfrac{1}{V^1_{T}(X^{1})}1_{\{X^1 \in B\}}d\Q^{1}\\=\\
\int_{\Omega}\dfrac{1}{V^2_{T}(X^{2})}1_{\{X^2 \in B\}}d\Q^{2}\\=\\
\P^{2}\{X^2 \in B\}.
\end{equation*}
So, $X^1$ under  $\P^1$ has the same law as $X^2$ under $\P^2$.
Finally the martingale problem related to \eqref{DOperatorL} with respect
to $\shd_{L^{\delta}} (\R_+)$ admits uniqueness.
\end{prooff}

\subsection{Path-dependent Bessel process: results on
pathwise uniqueness.}\label{SPathUniq}

$ $

In this section, $\bar \Gamma$ is the same as the one defined in \eqref{barra}, i.e.
\begin{equation*}
  \bar{\Gamma}(s, \eta) := 2\sqrt{\vert \eta(s)\vert}
  \Gamma(s, \sqrt{\vert \eta^s\vert }) + \delta, s \in [0, T]\ \eta \in C([0, T]).
\end{equation*}
At this point, we can state a pathwise uniqueness
theorem.
For this purpose, we state the following assumption.
\begin{assumption}\label{H3}
		$ $
	\begin{enumerate}
        \item There exists a constant $K > 0$ such that,
        for every $s \in [0,T], \eta^1, \eta^2 \in C([0,T])$, we have
          $|\bar{\Gamma}(s, \eta^1) - \bar{\Gamma}(s, \eta^2) \vert
		\le K\left(\vert\eta^1(s) - \eta^2(s)  \vert + \int_{0}^{s}|\eta^1(r) - \eta^2(r)|dr\right).$
		\item $\displaystyle\sup_{t \in [0, T]} \vert \bar \Gamma(t, 0)\vert < \infty.$
	\end{enumerate}
\end{assumption}
\begin{remark}\label{r}
	
	$ $
\begin{enumerate}

\item
 $\sigma_{0}(y) = 2\sqrt{\vert y\vert}$  has linear growth.
\item	
  Defining $l(x) = 2\sqrt{x}, x \ge 0$,
  we have
  $\int_{0}^{\epsilon} l^{-2}(u) du = \infty$
for every  $\epsilon > 0$ and $|l(x) -l(y)| \leq l(|x - y|), x, y \in \R_+$.

\end{enumerate}
\end{remark}

\begin{remark}\label{r2}
	Note that, by Remark \ref{r},
 Assumption \ref{H3}
implies Assumption \ref{ALip}.
\end{remark}
We start the analysis by considering equation \eqref{S1bis}. For the definitions of strong existence and pathwise uniqueness for path-dependent SDEs,
see Definitions A.2 and A.3 of \cite{ORT1_PartI}.
\begin{theorem}\label{uni2} Suppose Assumptions \ref{H3} and \ref{AGrowth}.
\begin{enumerate}
\item   \eqref{S1bis} admits pathwise uniqueness.
\item
 \eqref{S1bis} admits strong existence.
\item Suppose $x_0 \ge 0$.
Every  solution of \eqref{S1bis} with $s_0 = x_0^2$ is non-negative.
\end{enumerate}
\end{theorem}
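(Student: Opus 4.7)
The plan is to derive all three statements by chaining together results already established in the excerpt, once one recognizes that \eqref{S1bis} is exactly the path-dependent SDE $E(\sigma_0, 0, \bar{\Gamma})$ in the sense of Section \ref{SPathDep}, with $\sigma_0(y) = 2\sqrt{|y|}$ and $\bar{\Gamma}$ given by \eqref{barra}. The preliminary observation is that Assumption \ref{ALip} holds in this setup: its items (1) and (2) concerning $\sigma_0$ are supplied by Remark \ref{r}, while items (3) and (4) on $\bar{\Gamma}$ are exactly the content of Assumption \ref{H3}. This is summarized in Remark \ref{r2}.

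For item (1), with Assumption \ref{ALip} verified, pathwise uniqueness for \eqref{S1bis} is an immediate application of Proposition \ref{TStrong}, and I would simply cite that proposition.

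For item (2), the strategy is to combine the existence in law established in Proposition \ref{exi}(1) with the pathwise uniqueness obtained in item (1), and then invoke the path-dependent version of the Yamada--Watanabe theorem (as formulated in \cite{ORT1_PartI}) to conclude strong existence. For item (3), with $s_0 = x_0^2 \ge 0$, Proposition \ref{exi}(2) yields a (weak) non-negative solution of \eqref{S1bis}. Since pathwise uniqueness (item (1)) entails uniqueness in law, every solution must have the same law as this non-negative one, so every solution is non-negative almost surely.

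The main obstacle I anticipate is essentially bookkeeping: one must confirm that the path-dependent Yamada--Watanabe statement from \cite{ORT1_PartI} applies verbatim under the regularity of $\sigma_0$ and $\bar{\Gamma}$ at hand (in particular, with $\sigma_0$ merely $\frac{1}{2}$-Hölder at the origin, so that the Yamada--Watanabe condition via the function $l$ in Remark \ref{r}(2) is genuinely needed). Once this link is in place, the proof reduces to a verification of hypotheses already checked, and no fresh computation is required.
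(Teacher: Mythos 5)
Your proposal is correct and follows essentially the same route as the paper: pathwise uniqueness via Proposition \ref{TStrong} (with Remarks \ref{r} and \ref{r2} verifying Assumption \ref{ALip}), strong existence by combining the existence in law of Proposition \ref{exi} with the path-dependent Yamada--Watanabe theorem, and non-negativity by noting that uniqueness in law forces every solution to share the law of the non-negative solution furnished by Proposition \ref{exi}(2). No substantive difference from the paper's argument.
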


\begin{prooff}  \

\begin{enumerate}
\item
We remark that \eqref{S1bis} is of the form \eqref{Ybis1}.
The result follows  from Proposition \ref{TStrong}
and Remark \ref{r}.
\item
By Proposition \ref{exi}, we have existence in law.
By  an extension of
Yamada-Watanabe theorem
to the path-dependent case,
strong existence holds for  \eqref{S1bis}.
\item Suppose $x_0 \ge 0$.
 By Proposition \ref{exi} (2),
\eqref{S1bis}
admits even existence in law of  a non-negative solution.
By  Yamada-Watanabe theorem extended to the path-dependent case,
pathwise uniqueness implies uniqueness in law, so that the
above-mentioned solution has to be non-negative.
\end{enumerate}
\end{prooff}

We are now able to state the following.
\begin{corollary}\label{Solution}
	Suppose that
$\bar{\Gamma}$ (defined in $\eqref{barra}$) fulfills Assumptions \ref{H3}
 and \ref{AGrowth}.
Then the strong martingale problem related to \eqref{DOperatorL}
(see Definition \ref{DSolution}) with respect to $\shd_{L^{\delta}}(\R_+)$
and $W$ admits strong existence and pathwise uniqueness.
\end{corollary}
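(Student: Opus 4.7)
The plan is to transfer the results of Theorem \ref{uni2} for the squared-Bessel-type SDE \eqref{S1bis} to the strong martingale problem by means of the equivalence established in Proposition \ref{X-S}(2), together with Remark \ref{RX-S} for its $\R_+$-version. The assumptions \ref{H3} and \ref{AGrowth} imposed in the statement are precisely the ones needed to invoke Theorem \ref{uni2}, so the proof will mostly consist in setting up the dictionary between the two formulations.

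For strong existence, I would fix a filtered probability space $(\Omega, \shf, \P, \mathfrak F)$ supporting a Brownian motion $W$ and an initial condition $x_0 \ge 0$. Setting $s_0 = x_0^2$, Theorem \ref{uni2}(2) produces a strong solution $S$ of \eqref{S1bis} on this space, and Theorem \ref{uni2}(3) ensures that $S$ is non-negative, so that $X := \sqrt{S}$ is a well-defined continuous non-negative $\mathfrak F$-adapted process with $X_0 = x_0$. By Proposition \ref{X-S}(2) combined with Remark \ref{RX-S}, the fact that $S = X^2$ solves \eqref{S1bis} with driving Brownian motion $W$ is equivalent to $X$ solving the strong martingale problem with respect to $\shd_{L^{\delta}}(\R_+)$ and $W$, which is exactly what is required.

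For pathwise uniqueness, let $X^1, X^2$ be two solutions on the same filtered probability space, driven by the same Brownian motion $W$, with $\P[X_0^1 = X_0^2] = 1$. Because one works with the domain $\shd_{L^{\delta}}(\R_+)$, both processes take values in $\R_+$. Applying the $\R_+$-version of Proposition \ref{X-S}(2) in the reverse direction, $S^i := (X^i)^2$ are two solutions of \eqref{S1bis} driven by $W$ and starting from the common value $s_0 = (X_0)^2$. Theorem \ref{uni2}(1) then yields that $S^1$ and $S^2$ are indistinguishable, and since each $X^i$ is the non-negative square root of $S^i$, the processes $X^1$ and $X^2$ coincide. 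There is no serious obstacle here: the only subtle point is the correct bookkeeping in the $\R_+$-reformulation of Proposition \ref{X-S}, namely that in the domain $\shd_{L^{\delta}}(\R_+)$ a solution is automatically non-negative, so that no information is lost when passing between $X$ and $S = X^2$.
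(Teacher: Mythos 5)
Your argument is correct and is essentially the paper's own proof: the paper likewise obtains strong existence and pathwise uniqueness for \eqref{S1bis} from Theorem \ref{uni2} (including non-negativity of solutions) and then transfers both properties to the strong martingale problem via the equivalence of Proposition \ref{X-S}(2) together with Remark \ref{RX-S}. Your write-up simply makes the bookkeeping between $X$ and $S=X^2$ explicit, which the paper leaves implicit.
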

\begin{preuve} \
By Theorem \ref{uni2},  the equation \eqref{S1bis} admits
a unique strong solution which is non-negative.
 Proposition \ref{X-S} and Remark \ref{RX-S} allow us to conclude the proof.

\end{preuve}

{\bf ACKNOWLEDGEMENTS.}
The authors are grateful to the anonymous Refere for stimulating
remarks and questions which motivated them to improve the paper.
The research related to this paper
 was financially supported by the Regional Program MATH-AmSud 2018,
 project Stochastic analysis of non-Markovian phenomena (NMARKOVSOC),
 grant 88887.197425/2018-00. A.O. acknowledges the financial support of
 CNPq Bolsa de Produtividade de Pesquisa grant 303443/2018-9.

 .

\bibliographystyle{plain}
\bibliography{../../../../../../BIBLIO_FILE/biblio-PhD-Alan}

\def\cprime{$'$}
\begin{thebibliography}{10}

\bibitem{abra}
M.~Abramowitz and I.~A. Stegun.
\newblock {\em Handbook of mathematical function with formulas, graphs and
  tables}.
\newblock National Bureau of Standards., 1972.

\bibitem{pilipenko}
O.~V. {Aryasova} and A.~Yu. {Pilipenko}.
\newblock {On the strong uniqueness of a solution to singular stochastic
  differential equations}.
\newblock {\em {Theory Stoch. Process.}}, 17(2):1--15, 2011.

\bibitem{beliaev2020new}
D.~Beliaev, T.J. Lyons, and V.~Margarint.
\newblock A new approach to {SLE} phase transition.
\newblock {\em Preprint arxiv :2001.10987}, 2020.

\bibitem{BertoinBessel}
J.~Bertoin.
\newblock Decomposition of {Brownian} motion with derivation in a local minimum
  by the juxtaposition of its positive and negative excursions.
\newblock S{\'e}minaire de probabilit{\'e}s, {Lect}. {Notes} {Math}. 1485,
  330-344., 1991.

\bibitem{dubedat}
J.~Dub\'edat.
\newblock Excursion decompositions for {SLE} and {W}atts crossing formula.
\newblock {\em Probab. Theory Relat. Fields}, 134(3):453--488, 2006.

\bibitem{ew}
H.-J. Engelbert and J.~Wolf.
\newblock Strong {M}arkov local {D}irichlet processes and stochastic
  differential equations.
\newblock {\em Teor. Veroyatnost. i Primenen.}, 43(2):331--348, 1998.

\bibitem{frw1}
F.~Flandoli, F.~Russo, and J.~Wolf.
\newblock Some {SDE}s with distributional drift. {I}. {G}eneral calculus.
\newblock {\em Osaka J. Math.}, 40(2):493--542, 2003.

\bibitem{frw2}
F.~Flandoli, F.~Russo, and J.~Wolf.
\newblock Some {SDE}s with distributional drift. {II}. {L}yons-{Z}heng
  structure, {I}t\^o's formula and semimartingale characterization.
\newblock {\em Random Oper. Stochastic Equations}, 12(2):145--184, 2004.

\bibitem{harri}
J.~M. Harrison and L.~A. Shepp.
\newblock On {S}kew {B}rownian motion.
\newblock {\em Ann. Probab.}, 9, 1981.

\bibitem{Jean}
M.~Jeanblanc, M.~Yor, and M.~Chesney.
\newblock {\em Mathematical Methods for Financial Markets}.
\newblock Springer Finance. Springer London, 2009.

\bibitem{ks}
I.~Karatzas and S.~E. Shreve.
\newblock {\em Brownian motion and stochastic calculus}, volume 113 of {\em
  Graduate Texts in Mathematics}.
\newblock Springer-Verlag, New York, second edition, 1991.

\bibitem{werner}
F.~G. Lawler, O.~Schramm, and W.~Werner.
\newblock Conformal restriction: The chordal case.
\newblock {\em J. Am. Math. Soc.}, 16(4):917--955, 2003.

\bibitem{lawler}
G.~F. Lawler.
\newblock {\em Conformally invariant processes in the plane}, volume 114.
\newblock Mathematical Surveys and Monographs. American Mathematical Society,
  Providence, RI, 2005., 2005.

\bibitem{LeGall}
J.~F. Le~Gall.
\newblock One-dimensional stochastic differential equations involving the local
  times of the unknown process.
\newblock In Aubrey Truman and David Williams, editors, {\em Stochastic
  Analysis and Applications}, pages 51--82, Berlin, Heidelberg, 1984. Springer
  Berlin Heidelberg.

\bibitem{mansuy}
R.~Mansuy and M.~Yor.
\newblock {\em Aspects of {B}rownian motion}.
\newblock Universitext. Springer-Verlag, Berlin, 2008.

\bibitem{ORT1_PartI}
A.~Ohashi, F.~Russo, and A.~Teixeira.
\newblock On path-dependent {SDEs} involving distributional drifts.
\newblock {\em Mod. Stoch., Theory Appl.}, 9(1):65--87, 2022.

\bibitem{Yor}
D.~Revuz and M.~Yor.
\newblock {\em Continuous Martingales and Brownian Motion}.
\newblock Springer, 1999.

\bibitem{russo_trutnau07}
F.~Russo and G.~Trutnau.
\newblock Some parabolic {PDE}s whose drift is an irregular random noise in
  space.
\newblock {\em Ann. Probab.}, 35(6):2213--2262, 2007.

\bibitem{sv}
D.~W. Stroock and S.~R.~S. Varadhan.
\newblock {\em Multidimensional diffusion processes}.
\newblock Classics in Mathematics. Springer-Verlag, Berlin, 2006.
\newblock Reprint of the 1997 edition.

\bibitem{zamb}
L.~Zambotti.
\newblock {\em Random obstacle problems}, volume 2181 of {\em Lecture Notes in
  Mathematics}.
\newblock Springer, Cham, 2017.
\newblock Lecture notes from the 45th Probability Summer School held in
  Saint-Flour, 2015.

\end{thebibliography}
\end{document}